\newtheorem{theorem}{Theorem}[section]
\newtheorem{example}[theorem]{Example}
\newtheorem{definition}[theorem]{Definition}
\newtheorem{remark}[theorem]{Remark}
\newtheorem{proposition}[theorem]{Proposition}
\newtheorem{lemma}[theorem]{Lemma}
\newtheorem{corollary}[theorem]{Corollary}
\crefname{theorem}{Theorem}{Theorems}
\crefname{example}{Example}{Examples}
\crefname{observation}{Observation}{Observations}
\crefname{remark}{Remark}{Remarks}
\crefname{proposition}{Proposition}{Propositions}
\crefname{lemma}{Lemma}{Lemmas}
\crefname{corollary}{Corollary}{Corollaries}
\crefname{algocf}{Algorithm}{Algorithms}	
\crefname{table}{Table}{Tables}	
\crefname{figure}{Figure}{Figures}
\crefname{algorithm}{Algorithm}{Algorithms}
\crefname{section}{Section}{Sections}
\crefname{algorithm}{Algorithm}{Algorithms}
\title{An efficient branch-and-cut approach for large-scale competitive facility location problems with limited choice rule}
\author[a]{Wei-Kun Chen}
\author[a]{Wei-Yang Zhang}
\author[a]{Yan-Ru Wang}
\author[d]{Shahin Gelareh}
\author[b]{Yu-Hong Dai}
\affil[a]{\small School of Mathematics and Statistics/Beijing Key Laboratory on MCAACI, Beijing Institute of Technology, Beijing 100081, China}
\affil[b]{\small Academy of Mathematics and Systems Science, Chinese Academy of Sciences, Beijing 100190, China}
\affil[c]{\small School of Mathematical Sciences, University of Chinese Academy of Sciences, Beijing 100049, China}
\affil[d]{\small D\'{e}partement R\&T, IUT de B\'{e}thune, Universit\'{e} d'Artois, B\'{e}thune F-62000, France}
\date{}
\DeclareMathOperator{\conv}{conv}
\newcommand{\CFLP}{\text{CFLP}\xspace}
\newcommand{\CFLPLR}{\text{CFLPLCR}\xspace}
\newcommand{\MCFLRU}{{MCFLRU}\xspace}
\newcommand{\LP}{{LP}\xspace}
\newcommand{\MIP}{{MIP}\xspace}
\newcommand{\MILP}{{MILP}\xspace}
\newcommand{\MINLP}{{MINLP}\xspace}
\newcommand{\MICQP}{{MICQP}\xspace}
\newcommand{\BnC}{\text{B\&C}\xspace}
\newcommand{\GBD}{\text{GBD}\xspace}
\newcommand{\solver}[1]{\textsc{#1}\xspace}
\newcommand{\Gurobi}{\solver{Gurobi}}
\newcommand{\CF}{\mathcal{F}}
\newcommand{\CN}{\mathcal{N}}
\newcommand{\CP}{\mathcal{P}}
\newcommand{\CQ}{\mathcal{Q}}
\newcommand{\CS}{\mathcal{S}}
\newcommand{\CT}{\mathcal{T}}
\newcommand{\X}{\mathcal{X}}
\newcommand{\Y}{\mathcal{Y}}
\newcommand{\Z}{\mathbb{Z}}
\newcommand{\R}{\mathbb{R}}
\newcommand{\LIN}{\text{L}\xspace}
\newcommand{\SI}{\text{SI}\xspace}
\newcommand{\SIbar}{\text{$\overline{\text{SI}}$}\xspace}
\newcommand{\order}{\rm{Ord}}
\newcommand{\DATAONE}{T1\xspace}
\newcommand{\DATATWO}{T2\xspace}
\newcommand{\testGBD}{\texttt{GBD}\xspace}
\newcommand{\testSI}{\texttt{B\&C+SI}\xspace}
\newcommand{\testLSI}{\texttt{B\&C+LSI}\xspace}
\newcommand{\nh}{\text{NH}\xspace}
\newcommand{\tblm}{$m$}
\newcommand{\tbln}{$n$}
\newcommand{\tblgamma}{$\gamma$}
\newcommand{\tblgapclosed}{\texttt{GI(\%)}\xspace}
\newcommand{\tblT}{\texttt{T}\xspace}
\newcommand{\tblTg}{\texttt{T\,(G\%)}\xspace}
\newcommand{\tblTlp}{$\texttt{T}^{\texttt{1}}$\xspace}
\newcommand{\tblTC}{$\texttt{CT}$\xspace}
\newcommand{\tblG}{\texttt{G\%}\xspace}
\newcommand{\tblN}{\texttt{N}\xspace}
\newcommand{\tblZ}{\texttt{$\nu$}\xspace}
\newcommand{\tblZUB}{\text{$\texttt{UB}^1$}\xspace}
\newcommand{\tblZLB}{\text{$\texttt{LB}^1$}\xspace}
\newcommand{\ave}{Ave.}
\newcommand{\tblSolved}{\texttt{Sol.}}
\newcommand{\UB}{\texttt{UB}}
\newcommand{\LB}{\texttt{LB}}
\newcommand{\RHS}{f}
\begin{document}
	\maketitle

\begin{abstract}
	We consider the competitive facility location problem with limited choice rule (\CFLPLR), which attempts to open a subset of facilities to maximize the net profit of a ``newcomer'' company, requiring customers to patronize only a limited number of opening facilities and an outside option. 
	We investigate the polyhedral structure of a mixed 0-1 set, defined by the function characterizing the probability of a customer patronizing the company's open facilities, and propose an efficient branch-and-cut (\BnC) approach for the \CFLPLR based on newly proposed mixed integer linear programming (\MILP) formulations. 
	Specifically, by establishing the submodularity of the probability function, we develop an \MILP formulation for the \CFLPLR using the submodular inequalities.
	For the special case where each customer patronizes at most one open facility and the outside option, we show that the  submodular inequalities can characterize the convex hull of the considered set and provide a compact \MILP formulation.
	{Moreover}, for the general case, we strengthen the submodular inequalities by sequential lifting, resulting in a class of facet-defining inequalities.
	The proposed lifted submodular inequalities are shown to be stronger than the classic submodular inequalities, enabling to obtain another \MILP formulation with a tighter linear programming (\LP)  relaxation.
	By extensive numerical experiments, we show that thanks to the tight \LP relaxation, the proposed \BnC approach outperforms the state-of-the-art generalized Benders decomposition approach by at least one order of magnitude. 
	Furthermore, it enables to solve \CFLPLR instances with $10000$ customers and $2000$ facilities.
\end{abstract}

\section{Introduction}\label{sect:Intro}

In the competitive facility location problem (\CFLP), a ``newcomer'' company attempts to enter a shared market, where competitors' facilities already exist, by opening new facilities to attract the customers' demand or buying power.
Each customer (or customer zone) is partially or totally served by the facilities located by the companies, {according to} some prespecified rules (also called \emph{customer choice rules}). 
The objective of the newcomer company is to maximize the market share or revenue. 
\CFLP arises in a wide variety of applications such as   
designing preventive healthcare networks  \citep{Zhang2012,Krohn2021}, 
locating park-and-ride facilities \citep{Aros-Vera2013,Freire2016},
building charging stations for electric vehicles \citep{Zhao2020}, 
deploying lockers for last-mile delivery \citep{Lin2022}, and
opening new retail stores \citep{Mendez-Vogel2023a}.
We refer to the surveys 
\citet{Eiselt1997,Plastria2001,Suarez-Vega2004,Kress2012,Drezner2024} for more applications and detailed reviews of \CFLP{s}.

In general, the patronizing behavior of customers is based on the attractiveness/utility of open facilities, which is predetermined by a set of attributes such as distance, transportation costs, waiting times, and facility capacities.
In the literature, several customer choice rules have been  proposed to model the patronizing behavior of customers.
Specifically, the \emph{proportional choice rule}, which includes the well-known multinomial logit \citep{McFadden1974} %
and Huff-based gravity \citep{Huff1964} rules as special cases, suggests that customers distribute their buying power among all open facilities by the newcomer company and an outside option (which corresponds to the aggregated facility of all existing competitors' facilities) in proportion to their {utilities}. 
The proportional choice rule has been widely applied in various context; see  \cite{Benati2002,Aboolian2007,Berman2009a,Kucukaydin2011a,Aros-Vera2013,Haase2014,Ljubic2018,Drezner2018,Mai2020} among many of them.
In contrast, the \emph{binary choice rule} assumes that each customer patronizes the facility with the highest utility \citep{Beresnev2013,Fernandez2017,Lancinskas2020}.
An intermediate approach, known as the \emph{partially binary choice rule}, is similar to the proportional choice rule but requires  the buying power of a customer to be distributed among the newcomer company's open facility with the highest utility and an outside option \citep{Suarez-Vega2004,Biesinger2016,Fernandez2017}.
\citet{Lin2021} further {generalized} the proportional choice rule and the partially binary choice rule by proposing the \emph{limited choice rule}, 
which assumes that customers first form a consideration set by choosing no more than a predetermined number of open facilities in descending order of utilities, 
and then split their buying power among the facilities in the consideration set and an outside option proportional to their utilities.
Let $n$ and $m$ denote the number of potential facilities considered by the newcomer company and the number of customers, respectively.
Mathematically, the probability of customer $i$ (or the proportion of its buying power) attracted by the open facilities of the newcomer company under the limited choice rule can be presented as 
\begin{equation}\label{setfunction}
	\Phi_i(\CS) = \max \left\{\frac{ \sum_{j \in \CS'} u_{ij}}{\sum_{j \in \CS'} u_{ij} + u_{i0}} : \CS' \subseteq \CS, \ |\CS'|\le \gamma_i\right\}.
\end{equation}
Here, $\CS \subseteq [n]:=\{1, 2, \ldots, n  \}$ is the subset of open facilities chosen by the newcomer company, 
$u_{ij}\geq 0$ is the utility of facility $j \in [n]$ to customer $i \in [m]:=\{1, 2, \ldots, m\}$,
$u_{i0}\geq 0$ is the utility of the outside option to customer $i$, 
and $\gamma_i$ is the restriction on the size of the consideration set for customer $i$.
When $\gamma_i \geq n$, the limited choice rule reduces to the proportional choice rule; when $\gamma_i =1$, it reduces to the partially  binary choice rule.
For other customer choice rules, we refer to \citet{Lancinskas2017,Gentile2018,Fernandez2021,Mendez-Vogel2023a,Lin2024a} and references therein.

In this paper, we consider the \emph{competitive facility location problem with limited choice rule} (\CFLPLR).
The considered problem attempts to locate new facilities in the competitive environment while maximizing the net profit of the company, which equals to the revenue collected by the open facilities minus the fixed cost of these facilities, and 
can be formally stated as
\begin{equation}\label{prob:original-max-S}\tag{\CFLPLR}
	\max_{\CS \in {\CF}}  \sum_{i \in [m]} b_i \Phi_i(\CS) - \sum_{j \in \CS} f_j,
\end{equation}
where $b_i$ is the buying power of customer $i \in [m]$, $f_j$ is the fixed cost of opening facility $j \in [n]$, and $\CF$ consists of (feasible) subsets of $[n]$, determined by the newcomer company.
For simplicity of presentation, we assume that $\CF = 2^{[n]}$ (i.e., $\CF$ includes all subsets of $[n]$), which is the case considered in \cite{Lin2021} (also called \CFLPLR in the paper).
However, our results in this paper can also be applied to other variants that consider other practical constraints on $\CF$ such as cardinality constraints, sizes of the facilities, and connectivity of the facilities. %
We refer to \cite{Ljubic2018} and the references therein for a detailed discussion on these practical constraints.

\subsection{Literature review}

A widely investigated case of \eqref{prob:original-max-S} is the \emph{maximum capture facility location problem with random utilities} (\MCFLRU) where $f_j = 0$ for all $j \in [n]$, $\gamma_i \ge n$ for all $i \in [m]$, and $\CF = \{\CS \subseteq [n]: |\CS| = p\}$ ($p\in \mathbb{Z}_+$).
\citet{Benati2002} formulated the \MCFLRU as a multiple-ratio 0-1 fractional programming problem with $n$ binary location variables and proposed the first mixed integer linear programming (\MILP) reformulation based approach to solve it.
Subsequently, \citet{Haase2009,Aros-Vera2013,Zhang2012} proposed alternative \MILP reformulations;
\citet{Haase2014} provided a computational comparison of these \MILP reformulations and concluded that the one from \citet{Haase2009} {offers} the best performance on a testset of random instances.
\citet{Freire2016} developed a strengthened version of the \MILP reformulation that provides a tighter LP relaxation than the one in \citet{Haase2009}.
Although the above \MILP formulations enable to globally solve \MCFLRU{s} by off-the-shelf commercial solvers, the huge problem size makes them  challenging to be solved efficiently, especially for large-scale cases (as the linearization of the multiple-ratio fractional objective function needs to introduce {$\mathcal{O}(mn)$ or $\mathcal{O}(mn^2)$} variables and constraints \citep{Haase2014}).
\citet{Altekin2021} developed a mixed-integer conic quadratic programming (\MICQP) reformulation for the \MCFLRU that involves only $m+n$ variables. 
Computational results on their testsets demonstrate the efficiency of solving the \MICQP formulation over solving the above \MILP formulations  using off-the-shelf commercial  solvers.
In another line of research, 
\citet{Ljubic2018} employed the submodularity of the probability function $\Phi_i(\CS)$ \citep{Benati2002}, and proposed a branch-and-cut (\BnC) algorithm based on the well-known submodular inequalities \citep{Nemhauser1981} for solving \eqref{prob:original-max-S} with proportional choice rule (i.e.,  $\gamma_i \geq n$ for all $i \in [m]$). 
The \BnC algorithm also involves $m+n$ variables and is particularly suitable to be applied to solving large-scale \CFLPLR{s} with proportional choice rule.
\citet{Ljubic2018} and \citet{Mai2020} proposed similar efficient \BnC algorithms but based on outer-approximation inequalities.

In contrast to the special case $\gamma_i\geq n$ for all $i \in [m]$, where only $n$ location variables are involved in the multiple-ratio 0-1 fractional programming problem, for  \eqref{prob:original-max-S} with arbitrarily positive {integers} $\gamma_i$, additional $mn$ allocation variables are needed for characterizing the mixed integer nonlinear programming (\MINLP) formulation \citep{Lin2021}, making it challenging for the algorithmic design.
\citet{Lin2021} proposed a generalized Benders decomposition (\GBD) algorithm \citep{Geoffrion1972} that projects the $mn$ allocation variables out from the \MINLP formulation and separates the so-called Benders cuts on the fly. 
Computational results show that the \GBD algorithm outperforms an outer-approximation algorithm and an \MICQP reformulation based approach.
Nevertheless, due to the weakness of the Benders cuts in the context of solving \CFLPLR{s}, the search tree by the \GBD algorithm is usually very large \citep{Lin2021}, prohibiting it from solving large-scale \CFLPLR{s}. 
It is worthwhile noting that this drawback comes from the derivation of the Benders cuts in the \GBD algorithm where the integrality constraints on the binary variables are completely neglected \citep{Bodur2017,Rahmaniani2017}, resulting in weak cuts.

\subsection{Contributions}

The goal of this paper is to fill this research gap by proposing an efficient \BnC approach that is capable of solving \emph{large-scale} \CFLPLR{s}.
The proposed \BnC approach is based on new strong valid inequalities derived from a mixed 0-1 set, which closely relates to the hypograph of the probability function $\Phi_i(\CS)$ but takes the integrality constraints into consideration. 
In particular, 
\begin{itemize}
	\item We establish the submodularity of the probability function $\Phi_i(\CS)$, which enables to characterize the mixed 0-1 set using the submodular inequalities, thereby obtaining an \MILP formulation for \eqref{prob:original-max-S}.
	For the special case where the customer behavior follows the partially binary rule, we show that a linear number of submodular inequalities are sufficient to describe the convex hull of the mixed 0-1 set.
	Using this result, we develop a compact \MILP formulation with a tighter  linear programming (\LP) relaxation than the Benders reformulation of the \CFLPLR in \cite{Lin2021}.
	\item We strengthen the submodular inequalities by sequential lifting \citep{Wolsey1976a,Richard2011}, obtaining a class of facet-defining inequalities, called lifted submodular inequalities.
	As a byproduct of analysis, we identify a necessary condition for an inequality to be facet-defining for the  convex hull of a generic 	submodular set.
	With this result, the lifted submodular inequalities are shown to be stronger than the classic submodular inequalities, leading to another \MILP formulation for the \eqref{prob:original-max-S} with a tighter \LP relaxation.
	We also discuss the algorithmic design for the computation of the lifted submodular inequalities.
\end{itemize}

The new developed \MILP formulations are solved by the proposed \BnC approach in which the submodular or lifted submodular inequalities are separated on the fly. 
Our extensive computational results show that thanks to the tight \LP relaxation, the proposed \BnC approach outperforms the state-of-the-art \GBD approach in \cite{Lin2021} by at least one order of magnitude.
Moreover, it enables to solve \CFLPLR instances with up to $10000$ customers and $2000$ facilities.

\subsection{Organization and Notations}

The rest of the paper is organized as follows.
\cref{sect:Prob} reviews the \MINLP formulation  of \eqref{prob:original-max-S} and discusses the weakness of the state-of-the-art \GBD approach of \cite{Lin2021} for solving it.
\cref{sect:SI} establishes the submodularity of the probability function $\Phi_i(\CS)$ and derives an \MILP formulation for \eqref{prob:original-max-S} based on the submodular inequalities.
\cref{sect:LSI} develops the lifted submodular inequalities utilizing  sequential lifting and discusses the algorithmic design for their computation.
\cref{sect:Imple} presents the implementation details of the proposed \BnC algorithm.
\cref{sect:Compu} reports the computational results.
Finally, \cref{sect:Conclusion} draws the conclusion.

Throughout the paper, for any $n \in \Z_+$, we denote {$[n] = \{1, 2, \dots, n\}$} and $[n] = \varnothing$ if $n = 0$.
Let $\boldsymbol{0}$ and $\boldsymbol{e}_j$ denote all zeros $n$-dimensional vector and the $j$-th $n$-dimensional unit vector, respectively.
For a subset $\CS \subseteq [n]$, we denote the characteristic vector of $\CS$ by $x^{\CS} $, that is, $x^{\CS}_j = 1$ if $j \in \CS$ and $x^{\CS}_j = 0$ otherwise. 
Conversely, for a vector $x \in \{0,1\}^n$, we denote the support of $x$ by $\CS^{x} = \{j \in [n]: x_j = 1\}$.
Given a vector $u \in \mathbb{R}^n$ and a subset $\CS \subseteq [n]$, we denote by $u(\CS)=\sum_{j \in \CS} u_j$.
{For} simplicity, we abbreviate $\CS \cup \{j\}$ and $\CS \backslash \{j\}$ as $\CS\cup j$ and $\CS\backslash j$ where $\CS\subseteq [n]$ and $j \in [n]$.
For a real value $a$, we denote ${(a)}^+= \max\{a,0\}$.

\section{Problem formulation}
\label{sect:Prob}

In this section, we first present the mathematical formulation for \eqref{prob:original-max-S}.
Then we briefly review the \GBD algorithm of \citet{Lin2021} and discuss its weakness that motivates the development of our proposed approach.

\subsection{An \MINLP formulation for \eqref{prob:original-max-S}}

Let $x \in \{0,1\}^{n}$ be location variables such that $x_j = 1$ if facility $j$ is open and $x_j = 0$ otherwise.
{Let $y \in \{0,1\}^{m\times n}$ be allocation variables such that $y_{ij} = 1$ if customer $i$ patronizes facility $j$ and $y_{ij} = 0$ otherwise.} 
Then \eqref{prob:original-max-S} can be formulated as an \MINLP problem \citep{Lin2021}:
\begin{subequations}\label{prob:original-max}
	\begin{align}
		\max_{x,\,y} \ & \sum_{i\in [m]}b_i\frac{\sum_{j \in [n]}u_{ij}y_{ij}}{\sum_{j \in [n]}u_{ij}y_{ij}+u_{i0}}-\sum_{j \in [n]}f_jx_j
		\label{objective}\\
		\text{s.t.} \   &y_{ij} \le x_j, \ \forall \ i \in [m], \ j \in [n], 
		\label{cons:xy}\\
		& \sum_{j \in [n]}y_{ij} \le \gamma_i, \ \forall \ i \in [m], 
		\label{cons:sum-y}\\
		& x_j \in \{0,1\}, \ \forall \ j \in [n], 
		\label{cons:x-bin}\\
		& y_{ij} \in \{0,1\}, \ \forall \ i \in [m], \ j \in [n] \label{cons:y-bin}.
	\end{align}
\end{subequations}
The objective function $\eqref{objective}$ maximizes the net profit, where the first term is the revenue collected by the open facilities and the second term is the fixed cost of opening the facilities.
{Constraints $\eqref{cons:xy}$ indicate that facility $j$ is considered by customer $i$ only if it is open.}
Constraints $\eqref{cons:sum-y}$ state that the number of facilities considered by customer $i$ is upper bounded by $\gamma_i$.
Finally, constraints \eqref{cons:x-bin} and \eqref{cons:y-bin} restrict the decision variables to be binary.

Note that the integrality constraints \eqref{cons:y-bin} in problem \eqref{prob:original-max} can be equivalently relaxed.
Indeed, let (\ref{prob:original-max}') be the problem in which \eqref{cons:y-bin} is replaced with 
\begin{equation}\label{cons:y-cont}
	y_{ij} \ge 0, \ \forall \ i \in [m], \ j \in [n], \tag{\ref{cons:y-bin}'}
\end{equation}
in problem \eqref{prob:original-max}.
By noting that there must exist an optimal solution $(x^*, y^\ast)$ of problem (\ref{prob:original-max}') such that for each $i \in [m]$, $y^\ast_{ij} = 1$ holds for the $\min\{\gamma_i, \sum_{j \in [n]} x_j^\ast\}$ open facilities with the highest utilities $u_{ij}$,  problems \eqref{prob:original-max} and (\ref{prob:original-max}') are equivalent.

\subsection{Weakness of the \GBD algorithm of \cite{Lin2021}} \label{subsect:weakness-GBD}

Note that due to the huge number of the allocation variables $y$ and the related constraints (especially when $m$ and $n$ are large), the \MINLP problem \eqref{prob:original-max} is generally difficult to solve. 
To tackle this difficulty, \citet{Lin2021} projected variables $y$ out from the formulation
and reformulated problem \eqref{prob:original-max} as
\begin{equation}\label{prob:GBD-pre}
\max_{x,\,w}  \left\{\sum_{i \in [m]} b_i w_i - \sum_{j \in [n]} f_j x_j : w_i \le \phi_i(x), \ \forall \ i \in [m], ~
x_j \in \{0,1\}, \ \forall \ j \in [n] \right\},
\end{equation}
where for each $i \in [m]$, $w_i$ is a continuous variable representing the probability of customer $i$ patronizing the newcomer company's open facilities, {and}
\begin{equation}\label{def:phi-i}
	\phi_i(x) = \max_{y} \  \left\{ \frac{\sum_{j \in [n]} u_{ij} y_{ij}}{\sum_{j \in [n]} u_{ij}  y_{ij} + u_{i0}} ~:~
	\sum_{j \in [n]} y_{ij} \le \gamma_i, \ 
	y_{ij} \le x_j, ~
	y_{ij} \in [0,1], ~\forall \ j \in [n]\right\}.
\end{equation}
Note that $ \phi_i(x^{\CS})=\Phi_i(\CS) $ holds for all $\CS \subseteq [n]$, where $\Phi_i(\CS) $ is defined in \eqref{setfunction}, and $x^{\CS}_j = 1$ if $j \in \CS$ and $x^{\CS}_j = 0$ otherwise. 
Thus, in the subsequent discussions, we use $\phi_i$ and $\Phi_i$ interchangeably.
\cite{Lin2021} derived a Benders reformulation of problem \eqref{prob:GBD-pre} in which $w_{i} \leq \phi_i(x)$, $i\in [m]$, are replaced by the so-called Benders cuts, and solved the reformulation using a \GBD algorithm \citep{Geoffrion1972}.  
In the following, we shall briefly discuss the Benders cuts; see \cite{Lin2021} for a detailed discussion on the derivation.

To present the Benders cuts, we first, 
for simplicity of presentation, drop subscript $i$ and abbreviate $w_i \leq \phi_i(x)$ as $w \leq \phi(x)$ where 
\begin{equation}\label{phidef}
	\phi(x) = \max_y \  \left\{ \frac{\sum_{j \in [n]} u_{j} y_{j}}{\sum_{j \in [n]} u_{j}  y_{j} + u_{0}} ~:~
	\sum_{j \in [n]} y_{j} \le \gamma, \ 
	y_{j} \le x_j,  \
	y_{j} \in [0,1], ~\forall \ j \in [n]\right\}.
\end{equation}
 Throughout the paper, we assume, without loss of generality,  that {$u_0 > 0$}, $u_1 \ge u_2 \ge \cdots \ge u_n \ge u_{n+1} := 0$, and $\gamma \ge 1$.
Given a point $\bar{x} \in \{0,1\}^n$, let %
$\{ j \in [n]\, : \, \bar{x}_j =1\}= \{j_1, j_2, \ldots, j_s\}$ with $j_1 < j_2 < \cdots < j_s$. 
Then the Benders cut in \cite{Lin2021} can be presented as follows:
\begin{equation}\label{ineq:GBD}
	w \le \phi(\bar{x}) + \sum_{j \in [n]} \lambda_j (x_j  - \bar{x}_j)
\end{equation}
where for $j \in [n]$, if  $s\leq \gamma$, $\lambda_j = u_0 u_j / (\sum_{\ell \in \{j_1, \dots, j_s\}} u_{\ell} +u_0)^2$ and $\lambda_j = u_0 (u_j - u_{j_{\gamma + 1}})^+ / (\sum_{\ell \in \{j_1, \dots, j_{\gamma}\}} u_{\ell}+u_0)^2$ otherwise{; and 
$(a)^+ = \max\{a, 0\}$ for $a \in \mathbb{R}$}.
It should be mentioned that (i) the Benders cuts are valid for the hypograph of function $\phi(x)$: 
$$\X_{\LIN}= \{ (w, x) \, : \, w \leq \phi(x) \}$$ and (ii) those defined by integral points $\bar{x} \in \{0,1\}^n$ are sufficient to provide a linear description of 
\begin{equation}\label{def:X}
	\X : = \X_{\LIN}\cap \{ (w,x)\, : \, x \in \{0,1\}^n \} =  \left\{(w,x) \in \R \times \{0,1\}^n: w \le \phi(x)\right\}.
\end{equation}

Although the \GBD approach of \cite{Lin2021} can overcome the disadvantage of the large problem size in \eqref{prob:original-max}, it could, however, still be inefficient due to \emph{the weakness of the Benders cuts}.
An example for this is presented in \cref{exp:GBD-weak}.
The reason behind this is that the derivation of the Benders cuts completely neglects the integrality constraints on variables $x$ \citep{Bodur2017,Rahmaniani2017};
in other words, the Benders cuts are not only valid for $\X$ but also valid for its {relaxation $\X_{\LIN}$}.
The weakness of the Benders cuts leads to a weak \LP relaxation of the Benders reformulation, thereby resulting in a large \BnC search tree by the \GBD algorithm, as will be demonstrated in \cref{sect:Compu}.

\begin{example}\label{exp:GBD-weak}
	Consider an example of $w \leq \phi(x)$ where 
	\begin{equation*}
		\phi(x) = \max \left\{ \frac{5 y_1 + 4 y_2 + 3 y_3}{5 y_1 + 4 y_2 + 3 y_3 + 10}: 
		y_1 + y_2 + y_3 \le 1, \  
		y_j \le x_j,  \ 
		y_j \in \{0,1\}, j = 1, 2, 3
		\right\}.
	\end{equation*}
	By simple computation, the Benders cut \eqref{ineq:GBD} induced by point $\bar{x} = (0, 0, 0)$ can be written as
	\begin{equation}\label{ineq:example-GBD-1}
		w  \le  \frac{5}{10} x_{1} + \frac{4}{10} x_{2} + \frac{3}{10} x_{3}.
	\end{equation}
	Notice that inequality \eqref{ineq:example-GBD-1} is a valid inequality for the mixed 0-1 set $\X^1 =\{ (w,x) \in \mathbb{R}\times\{0,1\}^{3} \, : \, w \leq \phi(x) \}$.
	However, it is very weak; indeed, the face defined by it reads $\CF^1  =  \{ (w, x) \in \X^1 : w  = \frac{5}{10} x_{1} + \frac{4}{10} x_{2} + \frac{3}{10} x_{3} \} = \{(0,0,0,0)\}$, and thus its dimension is only $0$. 
	Later, we will see that inequality \eqref{ineq:example-GBD-1} can be strengthened into 
	\begin{equation}\label{ineq:example-SM-1}
		w  \le  \frac{5}{15} x_{1} + \frac{4}{14} x_{2} + \frac{3}{13} x_{3},
	\end{equation}
	which defines a (strong) facet-defining inequality of $\conv(\X^1)$.
\end{example}

In this paper, we will bypass the weakness of the \GBD algorithm by developing a new efficient \BnC approach based on a polyhedral study of the mixed 0-1 set $\X$, which simultaneously takes the hypograph of function $\phi(x)$ and the integrality constraints $x\in \{0,1\}^n$ into consideration.
We will derive strong valid inequalities for $\X$ that as Benders cuts, are able to characterize $\X$.

\section{An \MILP formulation based on submodular inequalities}
\label{sect:SI}

In this section, we first show the submodularity of function $\Phi(\CS)$,
\begin{equation}\label{def:phiS1}
	\Phi(\CS) = \max\left\{
	\frac{u(\CS')}{u(\CS') + u_0} :  \CS' \subseteq \CS, \ |\CS'| \le \gamma
	\right\},
\end{equation}
obtained by dropping subscript $i\in [m]$ from \eqref{setfunction}.
Using this favorable property and the results in \cite{Nemhauser1981}, we  can derive a linear formulation for $\X$ using the submodular inequalities, and thus reformulate \eqref{prob:original-max-S} as an \MILP problem.
We illustrate the strength of submodular inequalities for the special case $\gamma = 1$.

\subsection{Submodularity of $\Phi(\CS)$} \label{subsect:submodularity}

We start with the definition of the submodular function, which can be found in, e.g., \citet{Nemhauser1978}.
\begin{definition}\label{def:submodular}
	A real-valued function $\Psi: 2^{[n]} \rightarrow \R$ is submodular if $\Psi(\CS \cup j) - \Psi(\CS) \ge \Psi(\CT \cup j) - \Psi(\CT)$ for all $\CS \subseteq \CT \subseteq [n]$ and $j \in [n] \backslash \CT$.
\end{definition}
\noindent Intuitively, if the marginal gain $\Psi(\CS\cup j)-\Psi(\CS)$ of adding an element $j$ diminishes with the size of the set, then function $\Psi(\CS )$ is submodular.
For the function $\Phi(\CS)$ in \eqref{def:phiS1}, 
if $\gamma \geq  n$,  it reduces to $\frac{u(\CS)}{u(\CS) + u_0}$. 
For this special case, 
it is known that $\Phi(\CS)$ is a submodular function; see  \cite{Berman1998,Benati2002,Ljubic2018}.
The following theorem shows, somewhat surprisingly, that this statement also holds for an arbitrarily positive integer $\gamma$. 
\begin{theorem}\label{thm:submodularity}
	Function $\Phi(\CS)$ is  submodular for any $\gamma \in \mathbb{Z}_+$ with $\gamma\geq 1$.
\end{theorem}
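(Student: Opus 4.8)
The plan is to prove submodularity directly from Definition~\ref{def:submodular}, by establishing the diminishing-marginal-returns inequality $\Phi(\CS\cup j)-\Phi(\CS)\ge\Phi(\CT\cup j)-\Phi(\CT)$ for all $\CS\subseteq\CT\subseteq[n]$ and $j\in[n]\setminus\CT$. The key structural observation is that the optimal inner set $\CS'$ in the definition of $\Phi(\CS)$ is always obtained greedily: since $t\mapsto t/(t+u_0)$ is increasing in $t\ge0$, for any $\CS$ the maximizer is the set consisting of the $\min\{\gamma,|\CS|\}$ elements of $\CS$ with the largest utilities $u_\ell$. Hence, writing $g(t)=t/(t+u_0)$, we have $\Phi(\CS)=g\bigl(u^{\star}_\gamma(\CS)\bigr)$, where $u^{\star}_\gamma(\CS)$ denotes the sum of the $\min\{\gamma,|\CS|\}$ largest $u$-values in $\CS$. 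So it suffices to understand the marginal behaviour of the ``top-$\gamma$ sum'' set function $T_\gamma(\CS):=u^{\star}_\gamma(\CS)$ together with the concavity of $g$.

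The first step is to analyze the marginal gain $T_\gamma(\CS\cup j)-T_\gamma(\CS)$ when a single element $j$ (with utility $u_j$) is added to $\CS$. There are two cases. If $|\CS|<\gamma$, the element $j$ is simply included, so the marginal gain equals $u_j$. If $|\CS|\ge\gamma$, then adding $j$ bumps out the current smallest among the top-$\gamma$ elements of $\CS\cup j$: the marginal gain is $\bigl(u_j-m_\gamma(\CS)\bigr)^+$, where $m_\gamma(\CS)$ is the $\gamma$-th largest $u$-value in $\CS$. The crucial monotonicity fact is that $m_\gamma(\CS)\le m_\gamma(\CT)$ whenever $\CS\subseteq\CT$ — adding more elements can only raise the $\gamma$-th order statistic — and likewise that the marginal ``base'' value $T_\gamma(\CS)\le T_\gamma(\CT)$, while the increment-cap structure means the marginal gain of adding $j$ to the larger set $\CT$ is at most the marginal gain of adding it to $\CS$: in symbols, $T_\gamma(\CS\cup j)-T_\gamma(\CS)\ge T_\gamma(\CT\cup j)-T_\gamma(\CT)\ge0$. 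This requires a short case check combining $|\CS|<\gamma$ vs.\ $|\CS|\ge\gamma$ with $|\CT|<\gamma$ vs.\ $|\CT|\ge\gamma$; in every case the displayed inequality reduces to monotonicity of the order statistics $m_\gamma(\cdot)$ or to the trivial bound $(u_j-m_\gamma(\CS))^+\le u_j$.

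The second step is to combine this with the concavity of $g(t)=t/(t+u_0)=1-u_0/(t+u_0)$ on $[0,\infty)$ (valid since $u_0>0$). Set $a=T_\gamma(\CS)$, $a'=T_\gamma(\CS\cup j)$, $b=T_\gamma(\CT)$, $b'=T_\gamma(\CT\cup j)$. From Step~1 we have $a\le b$, $a'\le b'$, $0\le a'-a$, $0\le b'-b$, and $a'-a\ge b'-b$. I want $g(a')-g(a)\ge g(b')-g(b)$. Write $\delta=a'-a$, $\eta=b'-b$, so $0\le\eta\le\delta$. Because $g$ is concave and increasing, $g(b')-g(b)=g(b+\eta)-g(b)\le g(a+\eta)-g(a)$ (the increment $g(x+\eta)-g(x)$ is nonincreasing in $x$, and $a\le b$) $\le g(a+\delta)-g(a)=g(a')-g(a)$ (since $\eta\le\delta$ and $g$ is nondecreasing, so $g(a+\eta)-g(a)\le g(a+\delta)-g(a)$). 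Chaining these gives exactly $\Phi(\CS\cup j)-\Phi(\CS)\ge\Phi(\CT\cup j)-\Phi(\CT)$, which is the claim.

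The main obstacle I expect is Step~1 — specifically, making the case analysis for the top-$\gamma$ sum function airtight, since the marginal gain formula changes form at the threshold $|\CS|=\gamma$ and one must verify that the ``saturated'' marginal $(u_j-m_\gamma(\CS))^+$ is genuinely dominated by the ``unsaturated'' marginal $u_j$ and is itself monotone decreasing in the set. Once the inequality $0\le T_\gamma(\CT\cup j)-T_\gamma(\CT)\le T_\gamma(\CS\cup j)-T_\gamma(\CS)$ together with $T_\gamma(\CS)\le T_\gamma(\CT)$ is in hand, the concavity argument in Step~2 is routine. A minor edge case worth noting is $j\in\CS$ already (excluded by hypothesis $j\in[n]\setminus\CT$) and the degenerate situation $T_\gamma(\CS)=T_\gamma(\CS\cup j)$, which is harmless since then the left-hand marginal of $g$ is $0$ and we still need $g(b')-g(b)=0$; but $a'-a\ge b'-b\ge0$ forces $b'=b$ in that case, so the inequality holds with both sides zero.
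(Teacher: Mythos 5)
Your proposal is correct and follows essentially the same route as the paper: both reduce $\Phi(\CS)=g\bigl(u(\CS^{\gamma})\bigr)$ to the top-$\gamma$ utility sum (your $T_\gamma$, the paper's $\CS^\gamma$ via Lemma~\ref{lem:subset} and Lemma~\ref{g_phi_lemma}), derive the same marginal-gain formula $(u_j-u_{k_\CS})^{+}$ for that sum, and then compare $\rho_j(\CS)$ with $\rho_j(\CT)$ using monotonicity of the base sum and of the $\gamma$-th order statistic. The only difference is presentational: you invoke concavity and monotonicity of $g(t)=t/(t+u_0)$ abstractly (composition of a nondecreasing concave function with a monotone submodular function), whereas the paper carries out the same two inequalities explicitly on the closed-form fraction in \eqref{rho-ge-gamma-2}; both arguments are valid.
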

To establish the result in \cref{thm:submodularity}, we need the following notations 
\begin{equation}\label{def:Sgamma}
	k_\CS = \left\{
	\begin{aligned}
		& n+1, && \text{ if } s\leq \gamma-1;\\
		&j_{\gamma}, && \text{otherwise},
	\end{aligned}
	\right.
	\text{ and } \CS^\gamma = \CS \cap [k_\CS],
\end{equation}
where $\CS = \{j_1, j_2, \dots, j_s\} \subseteq [n]$ with $j_1 < j_2 < \cdots < j_s$.
From the definitions of $k_{\CS}$, $\CS^{\gamma}$, and $\Phi(\CS)$,
we immediately obtain the following two lemmas.
\begin{lemma}\label{lem:subset}
	For $\CS\subseteq \CT\subseteq [n]$, it follows (i) $k_\CS \geq k_{\CT}$ and $u_{k_\CS} \leq u_{k_\CT}$ and (ii) $u(\CT^\gamma) \geq u(\CS^\gamma)$.
\end{lemma}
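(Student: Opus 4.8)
The plan is to begin by unpacking the combinatorial meaning of the auxiliary objects in \eqref{def:Sgamma}. Because the utilities are sorted, $u_1 \ge u_2 \ge \cdots \ge u_n \ge u_{n+1} = 0$, the set $\CS^\gamma = \CS \cap [k_\CS]$ is precisely a maximum-utility sub-selection of $\CS$ of size at most $\gamma$: namely $\CS^\gamma = \CS$ when $|\CS| \le \gamma$, and $\CS^\gamma = \{j_1,\dots,j_\gamma\}$ (the $\gamma$ smallest-index, hence largest-utility, elements of $\CS$) when $|\CS| \ge \gamma$. Writing $\CS = \{j_1 < \cdots < j_s\}$, any competitor $\CS' \subseteq \CS$ with $|\CS'| = \gamma$, listed as $\{j'_1 < \cdots < j'_\gamma\}$, satisfies $j'_k \ge j_k$ and hence $u_{j'_k} \le u_{j_k}$ for every $k$, so (using $u \ge 0$ to rule out smaller subsets) $u(\CS^\gamma) = \max\{u(\CS') : \CS' \subseteq \CS,\ |\CS'| \le \gamma\}$; consequently $\Phi(\CS)$ in \eqref{def:phiS1} equals $u(\CS^\gamma)/(u(\CS^\gamma)+u_0)$, since $t \mapsto t/(t+u_0)$ is nondecreasing on $[0,\infty)$. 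Part~(ii) is then immediate: $\CS^\gamma$ is itself feasible for the maximization defining $u(\CT^\gamma)$, because $\CS^\gamma \subseteq \CS \subseteq \CT$ and $|\CS^\gamma| \le \gamma$, whence $u(\CT^\gamma) \ge u(\CS^\gamma)$.

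For part~(i), write $\CT = \{\ell_1 < \cdots < \ell_t\}$, put $s = |\CS| \le t = |\CT|$, and compare $k_\CS$ with $k_\CT$ in three cases. If $t \le \gamma - 1$ then also $s \le \gamma - 1$, so $k_\CS = n+1 = k_\CT$. If $s \le \gamma - 1 < \gamma \le t$ then $k_\CS = n+1$ while $k_\CT = \ell_\gamma \le n$, so $k_\CS > k_\CT$. Finally, if $s \ge \gamma$ (hence $t \ge \gamma$), then $k_\CS = j_\gamma$ and $k_\CT = \ell_\gamma$; here $\CS$ has exactly $\gamma$ elements not exceeding $j_\gamma$, so the superset $\CT$ has at least $\gamma$ such elements, which forces its $\gamma$-th smallest element to satisfy $\ell_\gamma \le j_\gamma$, i.e. $k_\CT \le k_\CS$. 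In every case $k_\CS \ge k_\CT$, and then $u_{k_\CS} \le u_{k_\CT}$ follows at once from the monotonicity $u_1 \ge u_2 \ge \cdots \ge u_{n+1}$.

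The argument is entirely routine; the one place that deserves a moment's care is the order-statistic comparison in the last case — that enlarging a set cannot increase its $\gamma$-th smallest element — together with the consistent treatment of the sentinel value $k_\CS = n+1$ (with $u_{n+1} = 0$) when a set has fewer than $\gamma$ elements, so that both inequalities $k_\CS \ge k_\CT$ and $u_{k_\CS} \le u_{k_\CT}$ remain meaningful in that degenerate case. Beyond that, the proof uses only the monotonicity of $t \mapsto u_t$ and of $t \mapsto t/(t+u_0)$.
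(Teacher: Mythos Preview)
Your argument is correct. The paper does not give a proof of this lemma at all; it simply states that the result follows ``from the definitions of $k_{\CS}$, $\CS^{\gamma}$, and $\Phi(\CS)$,'' so your write-up supplies the routine details the authors omitted. Your variational characterization $u(\CS^\gamma) = \max\{u(\CS') : \CS' \subseteq \CS,\ |\CS'| \le \gamma\}$ is a clean way to handle part~(ii), and the order-statistic case split for part~(i) is exactly what is needed; the extra observation that $\Phi(\CS) = u(\CS^\gamma)/(u(\CS^\gamma)+u_0)$ is really the content of the next lemma (\cref{g_phi_lemma}) rather than this one, but it does no harm here.
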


\begin{lemma}\label{g_phi_lemma}
	For $\CS \subseteq [n]$, let 
	\begin{equation}
		g(\CS) =  \frac{u(\CS)}{u(\CS) + u_0}.
	\end{equation}
	Then $\Phi(\CS) = \Phi(\CS^\gamma) = g(\CS^\gamma)$.
\end{lemma}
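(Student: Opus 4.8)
The plan is to prove both equalities by collapsing the set-maximization defining $\Phi$ into a single greedy choice, exploiting the monotonicity of the scalar map $t \mapsto t/(t+u_0)$. First I would observe that, since $u_0 > 0$, the function $h(t) := t/(t+u_0)$ is increasing on $[0,\infty)$, and that every feasible $\CS' \subseteq \CS$ has $u(\CS') \ge 0$ because the utilities are nonnegative. This lets me push the maximum through $h$ and write
\[
\Phi(\CS) = h\!\left(\max\left\{u(\CS') : \CS' \subseteq \CS,\ |\CS'| \le \gamma\right\}\right),
\]
so that evaluating $\Phi(\CS)$ reduces to maximizing the linear functional $u(\CS') = \sum_{j \in \CS'} u_j$ over the subsets of $\CS$ of cardinality at most $\gamma$.

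The second step is to identify the maximizer of this inner problem. Because all $u_j$ are nonnegative and ordered as $u_1 \ge u_2 \ge \cdots \ge u_n$, a greedy argument shows the maximum of $u(\CS')$ is attained by selecting the $\min\{s,\gamma\}$ elements of $\CS = \{j_1 < \cdots < j_s\}$ with the smallest indices, hence the largest utilities. I would then check that this greedy set is exactly $\CS^\gamma$: when $s \le \gamma - 1$ we have $k_\CS = n+1$ and $\CS^\gamma = \CS$, whereas when $s \ge \gamma$ we have $k_\CS = j_\gamma$ and $\CS^\gamma = \{j_1,\ldots,j_\gamma\}$. Since $\CS^\gamma \subseteq \CS$ and $|\CS^\gamma| = \min\{s,\gamma\} \le \gamma$, it is feasible, and in both regimes $u(\CS^\gamma) = \max\{u(\CS') : \CS' \subseteq \CS,\ |\CS'| \le \gamma\}$. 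Combined with the first step this yields $\Phi(\CS) = h(u(\CS^\gamma)) = g(\CS^\gamma)$, which is the second claimed equality.

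Finally, to obtain $\Phi(\CS^\gamma) = \Phi(\CS)$, I would reuse the identity just proved, now with $\CS^\gamma$ in place of $\CS$, giving $\Phi(\CS^\gamma) = g\big((\CS^\gamma)^\gamma\big)$, and verify the fixed-point property $(\CS^\gamma)^\gamma = \CS^\gamma$ by a short case check. If $s \le \gamma - 1$, then $\CS^\gamma = \CS$ still has fewer than $\gamma$ elements, so the operator returns the same set; if $s \ge \gamma$, then $\CS^\gamma$ has exactly $\gamma$ elements whose $\gamma$-th index is $j_\gamma$, so $k_{\CS^\gamma} = j_\gamma$ and $\CS^\gamma \cap [j_\gamma] = \CS^\gamma$. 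Hence $\Phi(\CS^\gamma) = g(\CS^\gamma) = \Phi(\CS)$, completing the chain of equalities.

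I expect no deep obstacle here, as the content is essentially bookkeeping once the greedy reduction is in place. The only point requiring genuine care is keeping the two regimes $s \le \gamma - 1$ and $s \ge \gamma$ straight in the definition of $k_\CS$ and confirming that the greedy set coincides with $\CS^\gamma$ in each; the monotonicity of $h$ together with the nonnegativity and ordering of the utilities are precisely what legitimizes pushing the maximum through $h$ and justifies the greedy selection, so I would invoke both facts explicitly rather than leave them implicit.
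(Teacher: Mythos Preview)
Your proof is correct and matches the paper's approach: the paper states that this lemma follows ``immediately'' from the definitions of $k_\CS$, $\CS^\gamma$, and $\Phi(\CS)$ and gives no further argument, and what you have written is precisely the unpacking of that immediacy---monotonicity of $t\mapsto t/(t+u_0)$ together with the greedy identification of $\CS^\gamma$ as the top-$\min\{s,\gamma\}$ utility subset.
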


For $\CS = \{j_1, j_2, \dots, j_s\} \subsetneq [n]$ with $j_1 <j_2 < \cdots < j_s$ and $j \in [n]\backslash \CS$, we have
\begin{equation*}
	{(\CS\cup j)}^\gamma = \left\{
	\begin{aligned}
		& \CS^\gamma\cup j, && \text{ if } s \leq \gamma-1;\\
		& \CS^\gamma, && \text{ if } s \geq \gamma~\text{and}~j > k_{\CS};\\
		& \CS^\gamma \cup j \backslash k_{\CS},  && \text{ if } s \geq \gamma~\text{and}~ j < k_{\CS}.
	\end{aligned}
	\right.
\end{equation*}
In all {three} cases, we have {$u((\CS\cup j)^\gamma)=u(\CS^\gamma)+(u_j - u_{k_\CS})^{+}$}.
Let $\rho_j(\CS) = \Phi(\CS\cup j) -\Phi(\CS)$. Then by Lemma \ref{g_phi_lemma}, we have 
\begin{equation}\label{rho-ge-gamma-2}
	\begin{aligned}
		\rho_j(\CS) 
		& =\Phi(\CS\cup j) -\Phi(\CS)= g((\CS \cup j)^\gamma) - g(\CS^\gamma) \\
		& =  \frac{u((\CS\cup j)^{\gamma})}{u((\CS\cup j)^{\gamma}) + u_0} - \frac{u(\CS^{\gamma})}{u(\CS^{\gamma}) + u_0}= \frac{u_0(u_j - u_{k_\CS})^{+}}{(u(\CS^{\gamma}) + (u_j - u_{k_\CS})^{+} + u_0) \cdot (u(\CS^{\gamma}) + u_0)}.
	\end{aligned}
\end{equation}
\begin{proof}[Proof of \cref{thm:submodularity}]
	For any $\CS\subseteq \CT \subseteq [n]$, we have
	\begin{equation*}
		\begin{aligned}
			\rho_j(\CS) 
			& = \frac{u_0(u_j - u_{k_\CS})^{+}}{(u(\CS^{\gamma}) + (u_j - u_{k_\CS})^{+} + u_0) \cdot (u(\CS^{\gamma}) + u_0)} \\
			& \overset{(a)}{\geq}  \frac{u_0(u_j - u_{k_\CS})^{+}}{(u(\CT^{\gamma}) + (u_j - u_{k_\CS})^{+} + u_0) \cdot (u(\CT^{\gamma}) + u_0)}\\
			& = \frac{(u_j - u_{k_\CS})^{+}}{u(\CT^{\gamma}) + (u_j - u_{k_\CS})^{+} + u_0} \cdot \frac{u_0}{u(\CT^{\gamma}) + u_0}\\
			& \overset{(b)}{\geq} \frac{(u_j - u_{k_\CT})^{+}}{u(\CT^{\gamma}) + (u_j - u_{k_\CT})^{+} + u_0} \cdot \frac{u_0}{u(\CT^{\gamma}) + u_0}= \rho_j(\CT),
		\end{aligned}
	\end{equation*}
	where (a) follows from \cref{lem:subset} (ii) and (b) follows from $(u_j - u_{k_\CS})^{+} \geq (u_j - u_{k_\CT})^{+}$ (from \cref{lem:subset} (i)) and $\frac{b}{a+b} \geq \frac{c}{a+c}$ for all $a,b,c \in \mathbb{R}_+$ with $b \geq c$.
\end{proof}

\subsection{An \MILP reformulation for \eqref{prob:original-max-S}}
\label{subsect:SC-ineq}

Using the submodularity of $\Phi(\CS)$ in \cref{thm:submodularity} and the results in \citet{Nemhauser1981}, we know that either of the following two families of submodular inequalities is able to provide a linear description of $\X$:
\begin{align}
	& w \leq \Phi(\CS) + \sum_{j \in [n] \backslash \CS} \rho_{j}(\CS)x_{j} - \sum_{j \in \CS} \rho_{j}([n]\backslash j)(1-x_j),~\forall~\CS\subseteq [n] ,\label{sc1}\\
	& w \leq  \Phi(\CS) + \sum_{j \in [n]\backslash\CS}\rho_j(\varnothing) x_j - \sum_{j \in \CS} \rho_{j}(\CS \backslash j)(1-x_j),
	~\forall~\CS\subseteq [n]. \label{sc2}
\end{align}
\begin{proposition}[\cite{Nemhauser1981}]\label{prop:X-SC}
	$\X = \{(w, x) \in \R \times \{0,1\}^n : \eqref{sc1} \text{ or } \eqref{sc2}\}$.
\end{proposition}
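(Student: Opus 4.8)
The statement is attributed to Nemhauser and Wolsey, so the plan is essentially to recall the classical polyhedral result for hypographs of submodular functions over $\{0,1\}^n$ and apply it to the specific function $\Phi(\CS)$, whose submodularity has just been established in \cref{thm:submodularity}. Concretely, for any submodular set function $\Psi:2^{[n]}\to\R$, the set $\{(w,x)\in\R\times\{0,1\}^n : w\le\Psi(\CS^x)\}$ admits a linear description by either of two families of valid inequalities indexed by subsets $\CS\subseteq[n]$: one obtained by ``building up'' from $\CS$ using marginal contributions $\rho_j(\varnothing)$ for the added coordinates and $\rho_j(\CS\backslash j)$ for the removed ones, and one obtained by ``tearing down'' from $[n]$ using $\rho_j(\CS)$ and $\rho_j([n]\backslash j)$. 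I would first state the validity direction and then the sufficiency direction.

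\textbf{Step 1: Validity of \eqref{sc1} and \eqref{sc2} for $\X$.} Fix $\CS\subseteq[n]$ and any feasible point $(w,x)\in\X$, with $\CT=\CS^x$ its support, so $w\le\Phi(\CT)$. For \eqref{sc2}, I would write $\Phi(\CT)-\Phi(\CS)$ as a telescoping sum of marginal gains along a chain that first adds the elements of $\CT\backslash\CS$ one at a time to $\CS$, then removes the elements of $\CS\backslash\CT$ one at a time. Submodularity bounds each ``add'' term $\rho_j(\cdot)\le\rho_j(\varnothing)$ (since the intermediate set contains $\varnothing$) and each ``remove'' term $\rho_j(\cdot)\ge\rho_j(\CS\backslash j)$ (here one must be slightly careful: the set from which $j$ is removed sits between $\CS\backslash j$ and something larger, and submodularity gives the marginal of $j$ over the larger set is $\le$ that over $\CS\backslash j$; since we subtract, the inequality flips correctly). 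Collecting terms and using $x_j=1$ for $j\in\CT$, $x_j=0$ otherwise, yields exactly \eqref{sc2}. The argument for \eqref{sc1} is symmetric, chaining from $[n]$ down to $\CT\cup\CS$ and back, using $\rho_j(\CS)\ge\rho_j(\CT')$ and $\rho_j([n]\backslash j)\le\rho_j(\CT')$ for the relevant intermediate sets $\CT'$.

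\textbf{Step 2: Sufficiency.} I must show that every $(w,x)$ with $x\in\{0,1\}^n$ satisfying all inequalities in \eqref{sc1} (resp. \eqref{sc2}) lies in $\X$, i.e. $w\le\Phi(\CS^x)$. This is immediate: given such a point, instantiate the inequality \eqref{sc1} (or \eqref{sc2}) with the particular choice $\CS=\CS^x$. Then every term $\rho_j(\cdot)x_j$ with $j\in[n]\backslash\CS^x$ vanishes because $x_j=0$, and every term $\rho_j(\cdot)(1-x_j)$ with $j\in\CS^x$ vanishes because $x_j=1$, leaving precisely $w\le\Phi(\CS^x)$. Combined with Step 1 this gives the set equality in \cref{prop:X-SC}.

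\textbf{Main obstacle.} There is no deep obstacle here; the only place requiring care is the bookkeeping of signs and of the ``intermediate set'' arguments in Step 1 — ensuring that when a marginal-gain term is \emph{subtracted}, the direction of the submodular inequality used is the one that makes the overall bound go the right way, and that the chain of sets used in the telescoping is nested appropriately so that \cref{def:submodular} applies at each step. Since this is a restatement of \cite{Nemhauser1981}, I would keep the proof brief, possibly citing that reference for the general statement and only remarking that \cref{thm:submodularity} supplies the submodularity hypothesis needed to invoke it for $\Phi$.
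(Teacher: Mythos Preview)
The paper does not give its own proof of \cref{prop:X-SC}; it simply invokes the classical result of \citet{Nemhauser1981}, relying on \cref{thm:submodularity} to supply the submodularity hypothesis. Your plan to sketch the standard Nemhauser--Wolsey argument and then cite the reference is therefore exactly in line with what the paper does, and your Step~2 (sufficiency by instantiating $\CS=\CS^x$) is correct and complete.

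There is, however, a concrete slip in your Step~1 for \eqref{sc2}. With the chain you describe (first \emph{add} $\CT\backslash\CS$ to $\CS$, then \emph{remove} $\CS\backslash\CT$), the removal steps take place at sets of the form $(\CS\cup\CT)\backslash\{b_1,\dots,b_{k-1}\}$, which contain $\CT\backslash\CS$ and hence are \emph{not} subsets of $\CS\backslash b_k$. Submodularity then gives $\rho_{b_k}(\,\cdot\,)\le\rho_{b_k}(\CS\backslash b_k)$, the wrong direction for what you need. The fix is to reverse the order: go $\CS\to\CS\cap\CT\to\CT$ (remove first, then add). Then each removal happens at a subset of $\CS\backslash b_k$, giving $\rho_{b_k}(\,\cdot\,)\ge\rho_{b_k}(\CS\backslash b_k)$ as required, and each addition happens at a set containing $\varnothing$, giving $\rho_{a_k}(\,\cdot\,)\le\rho_{a_k}(\varnothing)$. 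Your chain for \eqref{sc1} (which should be $\CS\to\CS\cup\CT\to\CT$) works as is, since every set is a subset of $[n]\backslash b_k$ and a superset of $\CS$. This is precisely the ``nesting'' care you flagged in your obstacle paragraph; once corrected, the argument is the textbook one.
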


In the subsequent discussion, we use notations $\CS$-\SI and $\CS$-\SIbar to denote inequalities \eqref{sc1} and \eqref{sc2}, respectively, defined by $\CS$. 
When the context is clear, we abbreviate them as \SI and  \SIbar.
The result in \cref{prop:X-SC} enables to derive the following \MILP reformulation of problem \eqref{prob:GBD-pre} in which 
$w_i\leq \phi_{i}(x)$, $i \in [m]$, are replaced by the corresponding submodular inequalities
\begin{subequations}\label{prob:SC}
	\begin{align}
		\max \ & \sum_{i\in [m]}b_i w_i -\sum_{j \in [n]} f_jx_j \nonumber\\
		\text{s.t.} \   
		& w_i \le \Phi_i({\CS}) + \sum_{j \in [n]\backslash{\CS}} \rho_{ij}(\CS) x_j - \sum_{j \in \CS} \rho_{ij}([n]\backslash j) (1 - x_j), \ 
		\forall \ i \in [m], \ {\CS} \subseteq [n], 
		\label{cons:SC1-all}\\
		& w_i \le \Phi_i({\CS}) + \sum_{j \in [n]\backslash{\CS}} \rho_{ij}(\varnothing) x_j - \sum_{j \in {\CS}} \rho_{ij}(\CS\backslash j) (1 - x_j), \ \forall \ i \in [m], \ \CS \subseteq [n],
		\label{cons:SC2-all}\\
		& x_j \in \{0,1\}, \ \forall \ j \in [n]. \nonumber
	\end{align}
\end{subequations}
Here, \eqref{cons:SC1-all} and \eqref{cons:SC2-all} are inequalities \eqref{sc1} and \eqref{sc2}, respectively, with subscript $i$. 
In \cref{sect:Imple}, we will present a \BnC approach to solve problem \eqref{prob:SC} in which the {exponential families} of submodular inequalities \eqref{cons:SC1-all} and \eqref{cons:SC2-all} are separated on the fly.

The following proposition shows that the above \MILP formulation can be further simplified by incorporating only a small subset of the inequalities.
The proof can be found in Appendix \ref{sect:appendix-SC-simplification}. 
\begin{proposition}\label{prop:SC-simplification}
	For $\CS = \{j_1, j_2, \ldots, j_s\} \subseteq [n]$ with $j_1 < j_2 <  \cdots < j_s$, let $k_{\CS}$ and $\CS^{\gamma}$ be defined as \eqref{def:Sgamma} and let
	\begin{equation}\label{def:k-S-ell}
		\bar{\CS}^\gamma = \left\{
		\begin{aligned}
			& \CS && \text{ if } s \le \gamma ; \\
			& \CS\cup \{j_{\gamma+1},  j_{\gamma+1}+1, \ldots, n \} && \text{otherwise}.
		\end{aligned}
		\right.
	\end{equation}
	Then the following two statements hold: 
	(i) $\CS$-\SI is equivalent to $\CS^\gamma$-\SI;
	(ii) $\CS$-\SIbar is dominated by  $\bar{\CS}^\gamma$-\SIbar.
\end{proposition}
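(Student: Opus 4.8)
The plan is to reduce both statements to pointwise comparisons of the right-hand sides of the inequalities involved, using two ingredients already in hand: the closed form \eqref{rho-ge-gamma-2} for the marginal gain $\rho_j(\CS)=\Phi(\CS\cup j)-\Phi(\CS)$ when $j\notin\CS$, and \cref{g_phi_lemma} (so that $\Phi(\CS)=\Phi(\CS^\gamma)=g(\CS^\gamma)$). Write $\CS=\{j_1,\dots,j_s\}$ with $j_1<\cdots<j_s$. If $s\le\gamma$ then $\CS^\gamma=\CS$ and, by \eqref{def:k-S-ell}, $\bar{\CS}^\gamma=\CS$, so both claims are trivial; hence assume $s>\gamma$, which forces $\gamma<n$. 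The first step is to record a few ``truncation identities'' extending \cref{lem:subset}: (a) $k_{\CS^\gamma}=k_{\CS}=j_\gamma$ and $(\CS^\gamma)^\gamma=\CS^\gamma$; (b) since every member of $\bar{\CS}^\gamma$ other than $j_1,\dots,j_\gamma$ is at least $j_{\gamma+1}>j_\gamma$, the $\gamma$ smallest elements of $\bar{\CS}^\gamma$ are $j_1,\dots,j_\gamma$, so $(\bar{\CS}^\gamma)^\gamma=\CS^\gamma$; and (c) for $j\in\CS^\gamma$, $(\bar{\CS}^\gamma\backslash j)^\gamma=(\CS\backslash j)^\gamma=\{j_1,\dots,j_{\gamma+1}\}\backslash j$. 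Via \cref{g_phi_lemma} these give $\Phi(\CS^\gamma)=\Phi(\bar{\CS}^\gamma)=\Phi(\CS)$, and via \eqref{rho-ge-gamma-2} (which depends on the base set only through $k_{\CS}$ and $u(\CS^\gamma)$) they give the exact identities $\rho_j(\CS^\gamma)=\rho_j(\CS)$ for all $j\in[n]\backslash\CS$ and $\rho_j(\bar{\CS}^\gamma\backslash j)=\rho_j(\CS\backslash j)$ for all $j\in\CS^\gamma$. Finally, for $D:=\CS\backslash\CS^\gamma=\{j_{\gamma+1},\dots,j_s\}$ every $j\in D$ has $j>\gamma$, hence $u_j\le u_\gamma$ and $u_j\le u_{j_\gamma}$; since $k_{[n]\backslash j}=\gamma$ (using $n>\gamma$) while $k_{\CS^\gamma}=k_{\CS\backslash j}=j_\gamma$, formula \eqref{rho-ge-gamma-2} forces $\rho_j([n]\backslash j)=\rho_j(\CS^\gamma)=\rho_j(\CS\backslash j)=0$ for all $j\in D$; and $\rho_j(\cdot)\ge0$ always, since numerator and denominator in \eqref{rho-ge-gamma-2} are nonnegative.

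For part (i): since $\CS^\gamma\subseteq\CS$, split $\CS=\CS^\gamma\sqcup D$ and $[n]\backslash\CS^\gamma=([n]\backslash\CS)\sqcup D$, and subtract the right-hand side of $\CS$-\SI from that of $\CS^\gamma$-\SI. The constant terms cancel ($\Phi(\CS^\gamma)=\Phi(\CS)$), the coefficients of $x_j$ for $j\in[n]\backslash\CS$ cancel ($\rho_j(\CS^\gamma)=\rho_j(\CS)$), and what remains is exactly $\sum_{j\in D}\bigl(\rho_j(\CS^\gamma)x_j+\rho_j([n]\backslash j)(1-x_j)\bigr)$, which is identically zero by the vanishing-gain facts. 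Hence $\CS$-\SI and $\CS^\gamma$-\SI coincide as inequalities, giving (i).

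For part (ii): write $\bar{\CS}^\gamma=\CS\sqcup B$ with $B:=\bar{\CS}^\gamma\backslash\CS$, and subtract the right-hand side of $\bar{\CS}^\gamma$-\SIbar from that of $\CS$-\SIbar. The constants cancel ($\Phi(\CS)=\Phi(\bar{\CS}^\gamma)$); the ``$+$'' sums over $[n]\backslash\CS$ versus $[n]\backslash\bar{\CS}^\gamma$ differ by $\sum_{j\in B}\rho_j(\varnothing)x_j$; and, splitting $\CS=\CS^\gamma\sqcup D$ and $\bar{\CS}^\gamma=\CS\sqcup B$, the ``$-$'' sums contribute $\sum_{j\in\CS^\gamma}\bigl(\rho_j(\bar{\CS}^\gamma\backslash j)-\rho_j(\CS\backslash j)\bigr)(1-x_j)+\sum_{j\in D}\bigl(\rho_j(\bar{\CS}^\gamma\backslash j)-\rho_j(\CS\backslash j)\bigr)(1-x_j)+\sum_{j\in B}\rho_j(\bar{\CS}^\gamma\backslash j)(1-x_j)$. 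The first of these vanishes termwise because $\rho_j(\bar{\CS}^\gamma\backslash j)=\rho_j(\CS\backslash j)$ on $\CS^\gamma$; in the second, $\rho_j(\CS\backslash j)=0$ on $D$. Thus the total difference equals $\sum_{j\in B}\rho_j(\varnothing)x_j+\sum_{j\in B\cup D}\rho_j(\bar{\CS}^\gamma\backslash j)(1-x_j)$, a nonnegative combination of the $x_j$'s and the $(1-x_j)$'s, hence $\ge0$ for all $x\in[0,1]^n$. Therefore the right-hand side of $\bar{\CS}^\gamma$-\SIbar never exceeds that of $\CS$-\SIbar, which is exactly the assertion that $\bar{\CS}^\gamma$-\SIbar dominates $\CS$-\SIbar, proving (ii).

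The conceptual content is light; the only place I expect friction is the verification of truncation identities (b)--(c) together with the attendant vanishing-gain computations, all of which amount to bookkeeping about how the operators $(\cdot)^\gamma$ and $k_{(\cdot)}$ react to inserting and deleting elements relative to the fixed ordering $u_1\ge\cdots\ge u_n$. I would therefore state (a)--(c) and the facts about $D$ as explicit auxiliary claims up front (as a companion to \cref{lem:subset}), after which the two right-hand-side expansions in parts (i) and (ii) are routine.
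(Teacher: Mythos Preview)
Your proposal is correct and follows essentially the same approach as the paper: both proofs reduce to the case $s>\gamma$, record the identities $\Phi(\CS)=\Phi(\CS^\gamma)=\Phi(\bar{\CS}^\gamma)$ and the vanishing of the relevant marginal gains on $\CS\backslash\CS^\gamma$ (respectively $\bar{\CS}^\gamma\backslash\CS$), and then cancel or bound the right-hand sides termwise. The only cosmetic difference is that the paper asserts $\rho_j(\CS\backslash j)=\rho_j(\bar{\CS}^\gamma\backslash j)$ for all $j\in\CS$ and $\rho_j(\bar{\CS}^\gamma\backslash j)=0$ for $j\in\bar{\CS}^\gamma\backslash\CS$, whereas you invoke the equality only on $\CS^\gamma$ and rely on nonnegativity of $\rho$ for the remaining indices; both lead to the same inequality.
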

\noindent Let 
\begin{equation}\label{Qdef}
	\CQ^1 = \{\CS \subseteq [n]: |\CS|\le \gamma\}~\text{and}~\CQ^2 = \CQ^1 \cup \{\bar{\CS}^{\gamma}: \CS \subseteq [n] ~\text{and}~ |\CS| = \gamma + 1\}.
\end{equation}
From \cref{prop:SC-simplification}, we can provide a more compact linear description of $\X$, where the submodular inequalities in \eqref{sc1} and \eqref{sc2} are simplified as
\begin{align}
	& w \leq \Phi(\CS) + \sum_{j \in [n] \backslash \CS} \rho_{j}(\CS)x_{j} - \sum_{j \in \CS} \rho_{j}([n]\backslash j)(1-x_j),~\forall~\CS \in \CQ^1 ,\label{sc1-sim}\\
	& w \leq  \Phi(\CS) + \sum_{j \in [n]\backslash\CS}\rho_j(\varnothing) x_j - \sum_{j \in \CS} \rho_{j}(\CS \backslash j)(1-x_j),~\forall~\CS \in \CQ^2. \label{sc2-sim}
\end{align}
\subsection{A compact \MILP formulation for \eqref{prob:original-max-S} with partially proportional rule}

Next, we shall demonstrate the strength of inequalities \eqref{sc1-sim} and \eqref{sc2-sim} by considering the special case that $\gamma = 1$, which arises in  \eqref{prob:original-max-S} with partially proportional rule \citep{Biesinger2016,Fernandez2017}.
We first notice that, in this case, $\CQ^1 = \{\varnothing \} \cup \{  \{\ell\} \, : \, \ell \in [n]\}$ and 
$\CQ^2 =\CQ^1 \cup \{\{k, \ell, \ell+1, \dots, n\} \, : \,  1 \le k < \ell \le n\}$.
Moreover,
	\begin{itemize}
		\item [\rm{(i)}] The $\varnothing$-\SI, $\varnothing$-\SIbar, and $\{\ell\}$-\SIbar ($\forall~\ell \in[n]$) are all equivalent to
		\begin{equation}\label{sc1-gamma1-0}
			w \le \sum_{j \in [n]} \frac{u_j}{u_j + u_0} x_j.
		\end{equation}	
		\item [\rm{(ii)}] For each $\ell \in \{ 2, 3, \dots, n\}$, the $\{\ell\}$-\SI is equivalent to 
		\begin{equation}\label{sc1-gamma1}
			\begin{aligned}
			w &\le \frac{u_{\ell}}{u_{\ell} + u_0} + \sum_{j = 1}^{\ell-1} \left( \frac{u_j}{u_j + u_0} - \frac{u_{\ell}}{u_{\ell} + u_0}\right) x_j  \\
			&=\frac{u_{\ell}}{u_{\ell} + u_0} + \sum_{j = 1}^{n} \left( \frac{u_j}{u_j + u_0} - \frac{u_{\ell}}{u_{\ell} + u_0}\right)^+ x_j .
			\end{aligned}
		\end{equation}
		The $\{1\}$-\SI and $\{2\}$-\SI are equivalent.
		\item [\rm{(iii)}] For any $k$, $\ell \in [n]$ and $k < \ell$, the
		$\{ k,\ell , \ell+1, \ldots, n\}$-\SIbar reads 
		\begin{equation}
			\begin{aligned}
				w 
				& \le \frac{u_\ell}{u_\ell + u_0} + \sum_{j =1}^{\ell-1} \left( \frac{u_j}{u_j + u_0} - \frac{u_\ell}{u_\ell + u_0}\right) x_j +
				\frac{u_\ell}{u_\ell + u_0} \cdot \sum_{j \in [\ell - 1] \backslash k} x_j, 
			\end{aligned}
		\end{equation}
		and is dominiated by $\{\ell\}$-\SI.
	\end{itemize}

\begin{example}\label{exp:SM}
	Consider the set $\X^1$ defined in  \cref{exp:GBD-weak}.
	Then inequality \eqref{sc1-gamma1-0} reads
	\begin{equation*}
		\begin{aligned}
			w 
			& = \Phi(\varnothing) + \sum_{j = 1,2,3} \rho_j(\varnothing) x_j
			= 0 + \frac{5}{15} x_1 + \frac{4}{14} x_2 + \frac{3}{13} x_3,
		\end{aligned}
	\end{equation*}
	which is exactly the inequality  \eqref{ineq:example-SM-1} and is stronger than the Benders cut \eqref{ineq:example-GBD-1}.
\end{example}

Notice that as we assume $u_{n+1} = 0$, inequality \eqref{sc1-gamma1-0} can be written as $w \le \frac{u_{n+1}}{u_{n+1} + u_0} + \sum_{j = 1}^{n} ( \frac{u_{j}}{u_{j} + u_0} - \frac{u_{n+1}}{u_{n+1} + u_0})^+ x_j$, and thus 
inequalities \eqref{sc1-gamma1-0} and \eqref{sc1-gamma1} can be unified into
\begin{equation}\label{sc1-gamma1-unified}
	w \le \frac{u_{\ell+1}}{u_{\ell+1} + u_0} + \sum_{j = 1}^{n} \left( \frac{u_j}{u_j + u_0} - \frac{u_{\ell+1}}{u_{\ell+1} + u_0}\right)^+ x_j, \ \forall~\ell \in [n].
\end{equation}
Thus, from \cref{prop:SC-simplification}, $\X$ can be described by only $n$ constraints in \eqref{sc1-gamma1-unified}.
Therefore, for  the special case that $\gamma_i = 1$ for all $i \in [n]$, i.e., the customers follow the partially binary rule \citep{Biesinger2016, Fernandez2017}, we can develop a compact \MILP formulation for \eqref{prob:original-max-S}:
\begin{equation}\label{prob:SC-gamma1}
	\begin{aligned}
		\max \ & \sum_{i\in [m]}b_i w_i -\sum_{j \in [n]} f_jx_j \\
		\text{s.t.} \   
		& w_i \le \frac{u_{i(\ell+1)}}{u_{i(\ell+1)} + u_{i0}} + \sum_{j = 1}^n \left( \frac{u_{ij}}{u_{ij} + u_{i0}} - \frac{u_{i(\ell+1)}}{u_{i(\ell+1)} + u_{i0}}\right)^+ x_j, \ \forall \ i \in [m], \ \ell \in [n], \\
		& x_j \in \{0,1\}, \ \forall \ j \in [n]. 
	\end{aligned}
\end{equation}
Here we assume $u_{i(n+1)}=0$ for all $i \in [m]$.

The following theorem demonstrates the strength of formulation \eqref{prob:SC-gamma1} in providing a strong \LP relaxation bound.
The proof of this theorem is presented in Appendix \ref{sect:appendix-convex-hull-gamma1}. 
\begin{theorem}\label{thm:convex-hull-gamma1}
	If $\gamma = 1$, then $\conv(\X)=\left\{(w,x) \in \R \times [0,1]^n: \eqref{sc1-gamma1-unified}\right\}$.
\end{theorem}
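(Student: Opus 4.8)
The plan is to show the polyhedron $P := \{(w,x) \in \R \times [0,1]^n : \eqref{sc1-gamma1-unified}\}$ coincides with $\conv(\X)$ when $\gamma = 1$. Since every inequality in \eqref{sc1-gamma1-unified} is a submodular inequality (by the discussion preceding \cref{thm:convex-hull-gamma1} together with \cref{prop:X-SC} and \cref{prop:SC-simplification}), these inequalities are valid for $\X$ and hence for $\conv(\X)$; combined with the bound constraints $x \in [0,1]^n$ this gives $\conv(\X) \subseteq P$. The substance of the theorem is the reverse inclusion $P \subseteq \conv(\X)$, and I would establish it by a vertex argument: show that every extreme point of $P$ lies in $\X$, i.e. is of the form $(w, x^{\CS})$ with $x \in \{0,1\}^n$ and $w = \Phi(\CS)$. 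Since $P$ is a polyhedron contained in a slab $0 \le w \le \max_j u_j/(u_j+u_0) \le 1$ and $x \in [0,1]^n$, it is pointed, so it is the convex hull of its extreme points; if each such point is in $\X \subseteq \conv(\X)$ we are done.

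First I would characterize the extreme points of $P$. At an extreme point $(w^\ast, x^\ast)$, $n+1$ linearly independent constraints are tight. I would argue that $x^\ast \in \{0,1\}^n$: suppose some coordinates $x^\ast_j$ are fractional; among the tight constraints, at least one must be an inequality from \eqref{sc1-gamma1-unified} (otherwise only bound constraints are tight and $x^\ast$ is a $0/1$ point with $w^\ast$ unconstrained above, contradicting that $w^\ast$ is bounded and the point is extreme — actually $w$ must then be pinned by some \eqref{sc1-gamma1-unified}). The key structural observation is that for a fixed $x^\ast$, the binding inequality in \eqref{sc1-gamma1-unified} that is tightest is determined by the ``threshold'' index, and the right-hand side as a function of $x$, restricted to the active inequality, is linear; a single such active inequality together with $w^\ast$ being tight uses up one degree of freedom, leaving $n$ independent bound constraints on the $x_j$, forcing all but possibly one $x^\ast_j$ to be integral, and then a short case check (using that two inequalities of the family \eqref{sc1-gamma1-unified} with consecutive thresholds agree exactly when the intermediate $x_j$ are set appropriately) rules out a genuinely fractional coordinate. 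Having $x^\ast \in \{0,1\}^n$, tightness of some \eqref{sc1-gamma1-unified} forces $w^\ast = \Phi(\CS^{x^\ast})$: one shows $\min_{\ell} \big[ \tfrac{u_{\ell+1}}{u_{\ell+1}+u_0} + \sum_j (\tfrac{u_j}{u_j+u_0} - \tfrac{u_{\ell+1}}{u_{\ell+1}+u_0})^+ x^\ast_j \big] = \Phi(\CS^{x^\ast})$, which is a direct computation: taking $\ell$ to be the index of the best (highest-utility) facility in $\CS^{x^\ast}$ (or $\ell = n$ if $\CS^{x^\ast} = \varnothing$) makes the sum collapse to exactly $u_{\ell}/(u_\ell + u_0) = \Phi(\CS^{x^\ast})$, and for any other $\ell$ the right-hand side is at least this value.

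An alternative, possibly cleaner route I would keep in reserve: prove $P \subseteq \conv(\X)$ directly by exhibiting, for each $(\bar w, \bar x) \in P$, a representation as a convex combination of points of $\X$. Writing $\bar x$ as a convex combination $\sum_k \lambda_k x^{\CS_k}$ of $0/1$ vectors via a ``staircase'' / chain decomposition (order the facilities by utility, which matches the structure of \eqref{sc1-gamma1-unified}), one checks that $\sum_k \lambda_k \Phi(\CS_k) \ge \bar w$ using the concavity-like behaviour of $g(\CS) = u(\CS)/(u(\CS)+u_0)$ along such chains; then $(\bar w, \bar x)$ is a convex combination of the $(\Phi(\CS_k), x^{\CS_k}) \in \X$ together with downward slack in $w$, which is also in $\conv(\X)$ since $\X$ is closed downward in $w$ — here one uses $u_0 > 0$ to keep $w$ bounded and the points well-defined. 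The main obstacle in either approach is the same: controlling the interaction between the ``$(\cdot)^+$'' truncations in \eqref{sc1-gamma1-unified} and the combinatorics of which facility is ``best in $\CS$'', i.e. verifying that the family \eqref{sc1-gamma1-unified} is exactly rich enough — no redundant inequalities missing and none too weak — to cut $\conv(\X)$ out with only $n$ facets. I expect the extreme-point argument to be the most robust to write up rigorously, so I would carry that one out, with the convexity estimate $\tfrac{b}{a+b} \ge \tfrac{c}{a+c}$ for $b \ge c \ge 0$ (already used in the proof of \cref{thm:submodularity}) as the workhorse inequality.
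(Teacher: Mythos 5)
Your overall strategy is the same as the paper's: both reduce the theorem to showing that every extreme point of $\CP=\{(w,x)\in\R\times[0,1]^n:\eqref{sc1-gamma1-unified}\}$ has $x\in\{0,1\}^n$, after which $w\le\min_\ell f_\ell(x)=\Phi(\CS^{x})$ places the point in $\X$; your validity direction and the computation $\min_\ell f_\ell(x^{\CS})=\Phi(\CS)$ at integral points are correct. The gap is that the decisive step, excluding fractional extreme points, is not actually carried out, and the dimension count you sketch is inconsistent. If exactly one inequality of \eqref{sc1-gamma1-unified} is tight (tightness of that inequality and ``$w^\ast$ being tight'' are the same constraint, not two), the remaining $n$ linearly independent tight constraints must be variable bounds on $n$ distinct coordinates, which makes \emph{every} $x_j$ integral; hence a fractional extreme point necessarily has at least two (possibly more) linearly independent tight inequalities from the family together with at most $n-1$ tight bounds. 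Ruling out exactly this configuration is the entire content of the theorem, and it requires the telescoping identity $f_\ell(x)-f_{\ell-1}(x)=\bigl(\tfrac{u_\ell}{u_\ell+u_0}-\tfrac{u_{\ell+1}}{u_{\ell+1}+u_0}\bigr)\bigl(\sum_{j\le\ell}x_j-1\bigr)$: it shows $f_\ell(x)$ is nonincreasing then nondecreasing in $\ell$, so simultaneous tightness of several family members forces conditions $\sum_{j\le r}x_j=1$ at intermediate indices with $u_r>u_{r+1}$ (while ties $u_r=u_{r+1}$ make distinct members of \eqref{sc1-gamma1-unified} identical and hence not independent), and one must then check that these equations plus the tight bounds cannot yield $n+1$ independent tight constraints at a genuinely fractional point. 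You allude to this (``consecutive thresholds agree when the intermediate $x_j$ are set appropriately'') but leave it as ``a short case check.'' The paper instead isolates the unimodality statement as \cref{obs:gamma1-minRHS} (the binding inequality is the one indexed by $k(x^\ast)$, where the cumulative mass $\sum_{j\le\ell}x^\ast_j$ first reaches $1$) and then avoids constraint counting entirely via a two-case perturbation argument: perturb a fractional coordinate beyond $k(x^\ast)$ with $w$ fixed, or perturb a fractional coordinate at or before $k(x^\ast)$ jointly with $w$ along the active inequality, exhibiting the alleged extreme point as a midpoint of two feasible points.

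A secondary slip: $\CP$ is \emph{not} contained in a slab $0\le w\le\max_j u_j/(u_j+u_0)$ — the inequalities only bound $w$ from above, so $\CP$ is unbounded below in $w$ and is not the convex hull of its extreme points. It is pointed, so the reduction can be repaired by Minkowski--Weyl, $\CP=\conv(\mathrm{ext}(\CP))+\mathrm{cone}\{(-1,\boldsymbol{0})\}$, noting that $(-1,\boldsymbol{0})$ is also a recession direction of $\conv(\X)$, but this must be stated. Your reserve route via chain decompositions is likewise only a sketch and would face the same combinatorial verification you defer in the main route.
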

\noindent Observe that the Benders cuts in \eqref{ineq:GBD} provide a linear  description of $\X$ but may not provide a linear description of $\conv(\X)$.
Therefore, \cref{thm:convex-hull-gamma1} demonstrates that the \LP relaxation of problem \eqref{prob:SC-gamma1} is stronger than that of the Benders reformulation of \cite{Lin2021} (with the Benders cuts). 
With this advantage, it can be expected that problem \eqref{prob:SC-gamma1} with the submodular inequalities  is much more computationally solvable than that with the Benders cuts.
In \cref{sect:Compu}, we will further conduct experiments to verify this. 

\begin{remark}
	When $\gamma = 1$, function $\Phi(\CS)$ %
	reduces to {$\max_{j \in \CS} \frac{u_j}{u_j+u_0}$}.
	It is worthwhile mentioning that {\cite{Nemhauser1981}} established the submodularity of function $\max_{j \in \CS} \frac{u_j}{u_j+u_0}$, and derived a linear description of $\X$ using the submodular inequalities in \eqref{sc1-gamma1-unified}.
	Here we further show that the submodular inequalities in \eqref{sc1-gamma1-unified} are able to provide a linear description of $\conv(\X)$; see \cref{thm:convex-hull-gamma1}.
\end{remark}

\section{An \MILP formulation based on lifted submodular inequalities}\label{sect:LSI}

In general, the submodular inequalities, however, could  be weak.
We refer to \cite{Ahmed2011,Shi2022} for the discussion on the weakness of the submodular inequalities in the context of solving a class of submodular maximization problems where  the submodular function can be represented as the composition of a concave function and an affine function.
In the context of solving the \MILP formulaiton \eqref{prob:SC} of \eqref{prob:original-max-S}, the weakness of the submodular inequalities \eqref{sc1} and \eqref{sc2} leads to a weak \LP relaxation, making a large search tree by the \BnC algorithm, as will be demonstrated in the experiments.  
In this section, we shall overcome this weakness by proposing a class of lifted submodular inequalities, obtained by using the sequential lifting technique \citep{Wolsey1976a,Richard2011}. 
The proposed lifted submodular inequalities are shown to be stronger than the submodular inequalities, which enables to derive a new \MILP formulation with a tighter \LP relaxation than formulation \eqref{prob:SC}. 
We also discuss the algorithmic details for the computation of the proposed inequalities.

\subsection{Strong valid inequalities by sequential lifting}

This subsection first reviews some background on sequential lifting and then develops the lifted submodular inequalities.
\subsubsection{Background on sequential lifting}\label{subsubsect:backglifting}

Sequential lifting is a useful procedure to generate strong valid (facet-defining) inequalities for a polyhedron from inequalities that are valid for the polyhedron in a low-dimensional space.
It has been widely employed in developing facet-defining inequalities for various polyhedra. 
We refer to \citep{Wolsey1976a,Richard2011} for a detailed discussion of sequential lifting. 
Here, we apply the sequential lifting technique to derive strong valid inequalities for $\conv(\X)$, where $\X$ is defined in \eqref{def:X}. 

Let $\CS_0$ and $\CS_1 \subseteq [n]$ be  two disjoint subsets, and consider a low-dimensional set
\begin{equation}
	\X(\CS_0, \CS_1) = \left\{ (w,x) \in \X: x_j = 0,~ \forall  ~j ~\in \CS_0, \ x_j = 1, ~\forall~ j~ \in \CS_1\right\},
\end{equation}
obtained by fixing variables with indices in $\CS_0$ and $\CS_1$ to $0$ and $1$ in $\X$, respectively.
Observe that $(-1,\sum_{j \in \CS_1} \boldsymbol{e}_j)$, $(0,\sum_{j \in \CS_1} \boldsymbol{e}_j)$,
$(0, \sum_{j \in \CS_1 \cup \ell} \boldsymbol{e}_j)$, $\ell \in [n] \backslash (\CS_0 \cup \CS_1)$, 
are affinely independent points in $\X(\CS_0, \CS_1)$.
Thus,
\begin{remark}\label{rmk:dim-X-S0-S1}
	The dimension of $\conv(\X(\CS_0, \CS_1))$ is $n + 1- |\CS_0| -|\CS_1|$.
\end{remark}

	Let
	\begin{equation}\label{ineq:seed}
		w \le \alpha_0 + \sum_{j \in [n] \backslash (\CS_0 \cup \CS_1)} \alpha_j x_j
	\end{equation}
	be a valid inequality of $\X(\CS_0, \CS_1)$, also referred to as a \emph{seed inequality}.
	Sequential lifting involves introducing the projected variables (i.e., $x_j$, $j \in \CS_0 \cup \CS_1$) into inequality \eqref{ineq:seed} one at a time, following a specific lifting ordering.
	Specifically, 
	let $j_1, j_2, \dots, j_s$ be an ordering of  $\CS_0 \cup \CS_1$ (where $s=|\CS_0| + |\CS_1|$) 
	and consider 
	\begin{equation}\label{ineq:ell-minus-1}
		\begin{aligned}
			w 
			& \le \alpha_0 + \sum_{j \in [n]\backslash(\CS_0 \cup \CS_1)} \alpha_j x_j 
			+ \sum_{j \in \CS'_0} \alpha_j x_j
			- \sum_{j \in \CS'_1} \alpha_j (1-x_j) 
		\end{aligned}
	\end{equation}
	 to be a so-far generated inequality for $\X(\CS_0\backslash \CS'_0, \CS_1\backslash \CS'_1)$, where $\CS'_0\subseteq \CS_0$, $\CS'_1 \subseteq \CS_1$, $\CS'_0 \cup \CS'_1 = \{j_1, j_2, \ldots, j_{\ell-1} \}$, and  $\ell \in [s]$.
	To lift variable $x_{j_{\ell}}$ into inequality \eqref{ineq:ell-minus-1}, we consider the following two cases.
	\begin{itemize}
		\item {Up-lifting}. If $j_\ell \in \CS_0$, we attempt to find the lifting coefficient $\alpha_{j_{\ell}}$ such that inequality 
		\begin{equation}\label{ineq:ell-S0}
			w \le \alpha_0 + \sum_{j \in [n]\backslash(\CS_0 \cup \CS_1)} \alpha_j x_j 
			+ \sum_{j \in \CS'_0} \alpha_j x_j
			- \sum_{j \in \CS'_1} \alpha_j (1-x_j ) 
			+ \alpha_{j_{\ell}} x_{j_{\ell}}
		\end{equation}
		is valid for $\X(\CS_0 \backslash (\CS'_0 \cup j_{\ell}), \CS_1 \backslash \CS'_1)$. 
		The {smallest coefficient}  $\alpha_{j_{\ell}}$ can be computed by $\alpha_{j_\ell}=-\alpha_0 + \sum_{j \in \CS'_1} \alpha_j + \nu^*$ where 
		\begin{equation}\label{liftprob:up-lifting}
			\nu^* = \max \left\{
			w - \sum_{j \in [n]\backslash(\CS_0 \cup \CS_1)} \alpha_j x_j 
			- \sum_{j \in \CS'_0\cup \CS'_1} \alpha_j x_j
			 : 
			(w,x) \in \X(\CS_0\backslash (\CS'_0\cup j_\ell), \CS_1\cup j_\ell\backslash \CS'_1)
			\right\}.
		\end{equation}
		\item {Down-lifting}. If $j_\ell \in \CS_1$, we attempt to find the lifting coefficient $\alpha_{j_{\ell}}$ such that inequality 
		\begin{equation}\label{ineq:ell-S1}
			w \le \alpha_0 + \sum_{j \in [n]\backslash(\CS_0 \cup \CS_1)} \alpha_j x_j 
			+ \sum_{j \in \CS'_0} \alpha_j x_j
			- \sum_{j \in \CS'_1} \alpha_j (1-x_j) 
			- \alpha_{j_{\ell}} (1-x_{j_{\ell}} )
		\end{equation}
		is valid for $\X(\CS_0 \backslash \CS'_0, \CS_1 \backslash (\CS'_1 \cup j_{\ell}))$. 
		The largest coefficient  $\alpha_{j_{\ell}}$ can be computed by $\alpha_{j_\ell}=\alpha_0 - \sum_{j \in \CS'_1} \alpha_j -\nu^*$ where 
		\begin{equation}\label{liftprob:down-lifting}
			\nu^* = \max \left\{
			w - \sum_{j \in [n]\backslash(\CS_0 \cup \CS_1)} \alpha_j x_j 
			- \sum_{j \in \CS'_0\cup \CS'_1} \alpha_j x_j
			: 
			(w,x) \in \X(\CS_0 \cup j_\ell \backslash \CS'_0, \CS_1\backslash (\CS'_1 \cup j_{\ell}))
			\right\}.
		\end{equation}
	\end{itemize}

Repeating the above lifting procedure until $\ell = s $, we obtain the lifted inequality 
\begin{equation}\label{ineq:lifted}
	w 
	\le \alpha_0 + \sum_{j \in [n]\backslash(\CS_0 \cup \CS_1)} \alpha_j x_j + \sum_{j \in \CS_0} \alpha_j x_j
	- \sum_{j \in \CS_1} \alpha_j (1-x_j),
\end{equation}
which is valid for $\conv(\X)$.  
The following theorem follows from \cref{rmk:dim-X-S0-S1} and the classic result in \cite{Wolsey1976a}.
\begin{theorem}[\cite{Wolsey1976a}]
	\label{prop:lifting-general-facet}
	If inequality \eqref{ineq:seed} defines a $d-$dimensional face of $\conv(\X(\CS_0, \CS_1))$,
	then inequality \eqref{ineq:lifted} defines a face of $\conv(\X)$ of dimension at least $d+|\CS_0| +|\CS_1|$.
	Moreover, if inequality \eqref{ineq:seed} defines a facet of $\conv(\X(\CS_0, \CS_1))$,
	then inequality \eqref{ineq:lifted} defines a facet of $\conv(\X)$ as well.
\end{theorem}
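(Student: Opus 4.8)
The plan is to run the standard sequential-lifting induction of \cite{Wolsey1976a} in the present setting, with the dimension counts supplied by \cref{rmk:dim-X-S0-S1}: I will show that each single lifting step keeps the inequality valid and raises the dimension of the face it induces by at least one, so that after all $|\CS_0|+|\CS_1|$ steps the face has grown by at least $|\CS_0|+|\CS_1|$.

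Fix the lifting order $j_1,\dots,j_s$ with $s=|\CS_0|+|\CS_1|$, and for $\ell=0,1,\dots,s$ put $\CS_0^{(\ell)}=\{j_1,\dots,j_\ell\}\cap\CS_0$, $\CS_1^{(\ell)}=\{j_1,\dots,j_\ell\}\cap\CS_1$, and $P_\ell=\conv\!\big(\X(\CS_0\backslash\CS_0^{(\ell)},\CS_1\backslash\CS_1^{(\ell)})\big)$. Let $I_\ell$ be the inequality produced after $\ell$ lifting steps, so that $I_0$ is the seed inequality \eqref{ineq:seed} on $P_0=\conv(\X(\CS_0,\CS_1))$, $I_s$ is the fully lifted inequality \eqref{ineq:lifted} on $P_s=\conv(\X)$, and $I_\ell$ has the form \eqref{ineq:ell-S0} or \eqref{ineq:ell-S1} according to whether $j_\ell\in\CS_0$ or $j_\ell\in\CS_1$. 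The induction hypothesis is: $I_\ell$ is valid for $P_\ell$ and the face it defines there has dimension at least $d+\ell$; the base case $\ell=0$ is exactly the hypothesis on the seed inequality.

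For the inductive step assume this for $\ell-1$ and lift $x_{j_\ell}$; I treat up-lifting ($j_\ell\in\CS_0$), down-lifting being symmetric with \eqref{liftprob:down-lifting} in place of \eqref{liftprob:up-lifting}. Since $\CS_0^{(\ell)}=\CS_0^{(\ell-1)}\cup\{j_\ell\}$, one has $P_{\ell-1}=P_\ell\cap\{x_{j_\ell}=0\}$ --- here I would invoke that, for a set of points whose $j_\ell$-th coordinate lies in $\{0,1\}$, taking the convex hull commutes with intersecting by the hyperplane $\{x_{j_\ell}=0\}$ --- so $P_{\ell-1}$ is a face of $P_\ell$, hence every face of $P_{\ell-1}$ is a face of $P_\ell$. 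Three points then finish the step. (i) By the choice of $\alpha_{j_\ell}$ in \eqref{liftprob:up-lifting}, $I_\ell$ is valid for $P_\ell$; for this one checks that the maximum $\nu^\ast$ is attained, which holds because the feasible set of \eqref{liftprob:up-lifting} is nonempty and its objective --- linear and increasing in $w$ under the constraint $w\le\phi(x)$, with $x$ confined to a finite $0/1$ set --- is bounded above, the optimum being some $(\bar w,\bar x)$ with $\bar w=\phi(\bar x)$ and $\bar x_{j_\ell}=1$. (ii) Setting $x_{j_\ell}=0$ in $I_\ell$ recovers $I_{\ell-1}$, so the face $F\subseteq P_\ell$ defined by $I_\ell$ contains the face of $P_{\ell-1}$ defined by $I_{\ell-1}$, which by the hypothesis has dimension $\ge d+\ell-1$ and lies in $\{x_{j_\ell}=0\}$. (iii) The maximizer $(\bar w,\bar x)$ lies in $P_\ell$ and makes $I_\ell$ tight, so $(\bar w,\bar x)\in F$; since $\bar x_{j_\ell}=1$, it is affinely independent of the affine hull of that subface. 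Hence $\dim F\ge(d+\ell-1)+1=d+\ell$, closing the induction.

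Taking $\ell=s$ gives the first assertion. For the ``moreover'' part, if \eqref{ineq:seed} is facet-defining for $\conv(\X(\CS_0,\CS_1))$ then, by \cref{rmk:dim-X-S0-S1}, $d=\big(n+1-|\CS_0|-|\CS_1|\big)-1$, so \eqref{ineq:lifted} defines a face of $\conv(\X)$ of dimension at least $d+|\CS_0|+|\CS_1|=n$; since $\conv(\X)=\conv(\X(\varnothing,\varnothing))$ is $(n+1)$-dimensional (again \cref{rmk:dim-X-S0-S1}), it is full-dimensional in $\R^{n+1}$, so no valid inequality is tight on all of it, the face defined by \eqref{ineq:lifted} is proper, and hence it is exactly a facet. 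The part I expect to require the most care is (i)/(iii) of the inductive step: verifying that each lifting subproblem is well-posed (optimum finite and attained) and that the returned optimizer is genuinely a point of the enlarged polytope $P_\ell$ that lies on the newly lifted inequality; everything else is the bookkeeping of \cite{Wolsey1976a} combined with the dimension formula of \cref{rmk:dim-X-S0-S1}.
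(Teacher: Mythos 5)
Your argument is correct, and the paper itself offers no proof of this statement: it is quoted from \cite{Wolsey1976a}, justified only by combining that classical lifting theorem with \cref{rmk:dim-X-S0-S1}. Your induction is exactly the standard sequential-lifting argument that citation stands for --- validity of each intermediate inequality plus a tight optimizer of the lifting problem with the newly lifted variable at its non-fixed value, contributing one additional affinely independent point per step --- and the facet case follows, as you say, from $d+|\CS_0|+|\CS_1|=n$ together with the $(n+1)$-dimensionality of $\conv(\X)$.
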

It should be noted that inequality \eqref{ineq:lifted} depends on the lifting ordering $j_1, j_2, \dots, j_s$ of $\CS_0 \cup \CS_1$, that is,  different lifting orderings may lead to different lifted inequalities.

\subsubsection{Two families of lifted submodular inequalities}
Next, we derive two families of lifted submodular inequalities for $\conv(\X)$. 
Given $\CS \subseteq [n]$, the inequality  
\begin{equation}\label{tmp_low_dim1}
	w \le \Phi(\CS) + \sum_{j \in [n] \backslash \CS} \rho_j(\CS) x_j
\end{equation} 
is a valid inequality for $\X(\varnothing,\CS)$ {(as it is a submodular inequality for $\X(\varnothing,\CS)$)}.
Let $j_1, j_2, \dots, j_s$ be an ordering of $\CS$. 
Then, using the {down-lifting} procedure in \cref{subsubsect:backglifting}, we obtain the valid inequality 
\begin{equation}\label{sc1-lifted}
	w \le \Phi(\CS) + \sum_{j \in [n] \backslash \CS} \rho_j(\CS) x_j - \sum_{j \in \CS} \eta_j(1 - x_j)
\end{equation}
for $\X$, where for each $\ell\in [s]$, the lifting coefficient is computed as follows
\begin{equation}\label{sc1liftprob}
	\eta_{j_{\ell}} = \Phi(\CS) - \sum_{\tau =1}^{\ell-1} \eta_{j_\tau}  - \max \left\{
	w
	- \sum_{j \in [n] \backslash \CS} \rho_j(\CS) x_j 
	- \sum_{\tau =1}^{\ell-1} \eta_{j_\tau} x_{j_\tau}  : 
	(w,x) \in \X(\{j_\ell\}, \{j_{\ell + 1}, j_{\ell + 2}, \dots, j_s\})
	\right\}.
\end{equation}

Similarly, given $\CS \subseteq [n]$, the inequality  
\begin{equation}\label{tmp_low_dim2}
	w \le \Phi(\CS) - \sum_{j \in \CS} \rho_j(\CS \backslash j) (1 - x_j)
\end{equation}
is a valid inequality for $\X([n]\backslash\CS,\varnothing)$ {(as it is a submodular inequality for $\X([n]\backslash\CS, \varnothing)$)}.
Let $j_1, j_2, \dots, j_{n-s}$ be an ordering of $[n] \backslash\CS$. 
Then, using the {up-lifting} procedure in \cref{subsubsect:backglifting}, we obtain the valid inequality 
\begin{equation}\label{sc2-lifted}
	w \le \Phi(\CS) - \sum_{j \in \CS} \rho_j(\CS \backslash j) (1 - x_j) + \sum_{j \in [n]\backslash \CS} \zeta_j x_j
\end{equation}
for $\X$, where for each $\ell \in [n-s]$, the lifting coefficient is computed as follows 
\begin{equation}
	\label{sc2liftprob}
	\begin{aligned}
	& \zeta_{j_{\ell}} = - \Phi(\CS) + \sum_{j \in \CS} \rho_j(\CS \backslash j) +\\
	&  \qquad\qquad\quad \max \left\{
	w - \sum_{j \in \CS} \rho_j(\CS \backslash j) x_j - \sum_{\tau = 1}^{\ell - 1} \zeta_{j_{\tau}} x_{j_{\tau}} : 
	(w,x) \in \X(\{j_{\ell + 1}, j_{\ell + 2}, \dots, j_{n-s}\}, \{j_{\ell}\})
	\right\}.
	\end{aligned}
\end{equation}

For any $\CS$, it is simple to check that the $n-|\CS| + 1$ affinely independent points $(\Phi(\CS), \sum_{j \in \CS}\boldsymbol{e}_j)$, $(\Phi(\CS\cup t), \sum_{j \in \CS\cup t}\boldsymbol{e}_j)$, $\forall~t \in [n]\backslash\CS$, are in $\conv(\X(\varnothing,\CS))$ fulfilling \eqref{tmp_low_dim1} with equality; 
and  the $|\CS| + 1$ affinely independent points $(\Phi(\CS), \sum_{j \in \CS}\boldsymbol{e}_j)$, $(\Phi(\CS\backslash t), \sum_{j \in \CS\backslash t}\boldsymbol{e}_j)$, $\forall~t \in \CS$, are in $\conv(\X([n]\backslash \CS, \varnothing))$ fulfilling \eqref{tmp_low_dim2} with equality.
Therefore, inequalities \eqref{tmp_low_dim1} and \eqref{tmp_low_dim2} are facet-defining for $\conv(\X(\varnothing, \CS))$ and $\conv(\X([n]\backslash \CS, \varnothing))$, respectively. 
From \cref{prop:lifting-general-facet}, we immediately obtain the following result.

\begin{proposition}\label{cor:sc-lifted-facet-def}
	Inequalities \eqref{sc1-lifted} and \eqref{sc2-lifted} are facet-defining for $\conv(\X)$.
\end{proposition}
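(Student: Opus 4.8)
The plan is to obtain the statement as a direct consequence of the sequential-lifting machinery of \cref{subsubsect:backglifting}, in particular the facet part of \cref{prop:lifting-general-facet}. Concretely, I would reduce the proof to three things: (a) $\conv(\X)$ is full-dimensional, so that being facet-defining for $\conv(\X)$ means inducing a face of dimension $n$; (b) the seed inequalities \eqref{tmp_low_dim1} and \eqref{tmp_low_dim2} are facet-defining for $\conv(\X(\varnothing,\CS))$ and $\conv(\X([n]\backslash\CS,\varnothing))$, respectively; and (c) inequalities \eqref{sc1-lifted} and \eqref{sc2-lifted} are precisely the inequalities produced by the down-lifting and up-lifting procedures applied to these seeds. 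Granting (a)--(c), \cref{prop:lifting-general-facet} immediately yields that \eqref{sc1-lifted} and \eqref{sc2-lifted} are facet-defining for $\conv(\X)$. Item (a) is already settled: taking $\CS_0=\CS_1=\varnothing$ in \cref{rmk:dim-X-S0-S1} gives $\dim\conv(\X)=n+1$.

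For (b), I would argue as follows for \eqref{tmp_low_dim1}; the case of \eqref{tmp_low_dim2} is symmetric. On $\X(\varnothing,\CS)$ one has $x_j=1$ for every $j\in\CS$, so \eqref{tmp_low_dim1} coincides there with $\CS$-\SI, which is valid for $\X$ by \cref{prop:X-SC}; hence \eqref{tmp_low_dim1} is valid for $\X(\varnothing,\CS)$. By \cref{rmk:dim-X-S0-S1}, $\dim\conv(\X(\varnothing,\CS))=n+1-|\CS|$, so a facet has dimension $n-|\CS|$ and is pinned down by $n-|\CS|+1$ affinely independent points lying on it. The points $(\Phi(\CS),\sum_{j\in\CS}\boldsymbol{e}_j)$ and $(\Phi(\CS\cup t),\sum_{j\in\CS\cup t}\boldsymbol{e}_j)$, $t\in[n]\backslash\CS$, listed before the statement all lie in $\conv(\X(\varnothing,\CS))$, satisfy \eqref{tmp_low_dim1} with equality (using $\rho_t(\CS)=\Phi(\CS\cup t)-\Phi(\CS)$), and are affinely independent: their differences with the first point are the vectors $(\rho_t(\CS),\boldsymbol{e}_t)$, whose $x$-parts are distinct coordinate vectors and hence linearly independent. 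Since the induced face is proper (for instance $(-1,\sum_{j\in\CS}\boldsymbol{e}_j)\in\X(\varnothing,\CS)$ fails the equality), it has dimension exactly $n-|\CS|$, i.e. \eqref{tmp_low_dim1} is facet-defining for $\conv(\X(\varnothing,\CS))$. The analogous computation for \eqref{tmp_low_dim2} uses $\dim\conv(\X([n]\backslash\CS,\varnothing))=|\CS|+1$ together with the $|\CS|+1$ points $(\Phi(\CS),\sum_{j\in\CS}\boldsymbol{e}_j)$ and $(\Phi(\CS\backslash t),\sum_{j\in\CS\backslash t}\boldsymbol{e}_j)$, $t\in\CS$, whose differences with the first point have $x$-parts $-\boldsymbol{e}_t$.

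For (c), I would note that \eqref{sc1-lifted} arises from the seed \eqref{tmp_low_dim1} for $\X(\varnothing,\CS)$ by down-lifting the variables of $\CS_1=\CS$ in the order $j_1,\dots,j_s$ with coefficients \eqref{sc1liftprob}, which is exactly the specialization of \eqref{liftprob:down-lifting} to $\CS_0=\varnothing$, $\CS_1=\CS$; likewise \eqref{sc2-lifted} arises from the seed \eqref{tmp_low_dim2} for $\X([n]\backslash\CS,\varnothing)$ by up-lifting the variables of $\CS_0=[n]\backslash\CS$ with coefficients \eqref{sc2liftprob} specializing \eqref{liftprob:up-lifting}. Each lifting coefficient is well-defined: the objective of the corresponding subproblem is an affine functional in $(w,x)$ that is bounded above on the relevant set (there $w\le\phi(x)\le1$ and $x$ ranges over finitely many values) and that set is nonempty, so the maximum is attained. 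Combining (a)--(c) with \cref{prop:lifting-general-facet} completes the argument. The step I expect to be the main obstacle is (b): upgrading the merely valid submodular-type seed to a \emph{facet} of the low-dimensional polytope, which requires the affine-independence check and the properness argument to be carried out carefully; everything downstream is a mechanical invocation of the lifting theorem, modulo confirming that the lifting subproblems \eqref{sc1liftprob}--\eqref{sc2liftprob} are feasible and bounded.
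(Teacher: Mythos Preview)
Your proposal is correct and follows essentially the same approach as the paper: establish that the seed inequalities \eqref{tmp_low_dim1} and \eqref{tmp_low_dim2} are facet-defining for the restricted sets by exhibiting the same collections of affinely independent tight points, and then invoke \cref{prop:lifting-general-facet}. You spell out a few details the paper leaves implicit (full-dimensionality of $\conv(\X)$, properness of the induced face, and well-posedness of the lifting subproblems), but the core argument is identical.
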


\subsection{Strength of lifted submodular inequalities}

We first characterize the bounds of coefficients in non-trivial facet-defining inequalities of the convex hull of the mixed 0-1 set defined by a generic \emph{submodular function}.
\begin{theorem}\label{thm:nontrivial}
	Let $\Psi: 2^{[n]} \rightarrow \R$ be a real-valued submodular function, and $\bar{\rho}_j(\CS)=\Psi(\CS \cup j) - \Psi(\CS)$ for $\CS \subseteq [n]$ and $j \in [n] \backslash \CS$.
	Define 
	\begin{equation*}
		\Y = \left\{(w,x) \in \R \times \{0,1\}^{n}: w \le \psi(x)\right\},
	\end{equation*}
	where $\psi(x) = \Psi(\CS^{x})$ and $\CS^{x} = \{j \in [n]: x_j = 1\}$.
	Then every nontrivial facet-defining inequality of $\conv(\Y)$ can be represented as $w \leq c+ \alpha^\top x $,
	where $c \geq 0$ and  $\bar{\rho}_j([n]\backslash j) \leq \alpha_j \leq \bar{\rho}_j(\varnothing) $ for all $j \in [n]$.
\end{theorem}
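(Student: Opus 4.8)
The plan is to prove the two coefficient bounds separately, exploiting only submodularity of $\Psi$ (equivalently, the monotonicity of marginals $\bar{\rho}_j(\CS)$ in $\CS$ guaranteed by \cref{def:submodular}) together with the fact that a nontrivial facet of $\conv(\Y)$ cannot be written with a zero $w$-coefficient. First I would argue that any facet-defining inequality of $\conv(\Y)$ that is not trivial (i.e.\ not one of $x_j\le 1$, $x_j\ge 0$) must have a strictly positive coefficient on $w$: since $\psi$ is real-valued and $w$ is unbounded below on $\Y$, an inequality valid for $\conv(\Y)$ can involve $w$ only through a nonnegative coefficient, and if that coefficient were zero the inequality would be valid for the projection of $\conv(\Y)$ onto $x$-space, which is the cube $[0,1]^n$, hence a nonnegative combination of the trivial inequalities and therefore not facet-defining in the mixed space. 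Normalizing the $w$-coefficient to $1$, every nontrivial facet can be written $w\le c+\alpha^\top x$.

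Next I would pin down $c$ and the upper bound $\alpha_j\le\bar{\rho}_j(\varnothing)$ by plugging in low-support points. Validity at $(w,x)=(\Psi(\varnothing),\boldsymbol{0})$, which lies in $\Y$, gives $\Psi(\varnothing)\le c$; but $\Psi(\varnothing)$ need not be nonnegative, so to get $c\ge 0$ I would instead use that a facet must be \emph{tight} at some point of $\conv(\Y)$, and combine tightness with the inequality $w\le\psi(x)$ itself — concretely, the inequality $w\le c+\alpha^\top x$ being valid and the point achieving $w=\psi(x)$ for $\CS^x=\varnothing$ forces $c\ge\Psi(\varnothing)$, and a separate normalization/shift argument (or a direct comparison with the submodular inequality \eqref{sc2} at $\CS=\varnothing$, which is valid and has intercept $\Psi(\varnothing)$ and $j$-coefficient $\bar{\rho}_j(\varnothing)$) shows the facet's intercept is at least that of the dominating submodular inequality; I expect the cleanest route is to show directly that $w\le c+\alpha^\top x$ being valid for $\conv(\Y)$ implies it is dominated on the relevant region unless $c\ge 0$. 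For the upper bound on $\alpha_j$: evaluate validity at the two points $(\Psi(\varnothing),\boldsymbol 0)$ and $(\Psi(\{j\}),\boldsymbol e_j)$, both in $\Y$; subtracting yields $\Psi(\{j\})-\Psi(\varnothing)=\bar{\rho}_j(\varnothing)\le\alpha_j$ is the \emph{wrong} direction, so instead I would use the \emph{submodular inequality} $\CS$-\SIbar with $\CS=\varnothing$, namely $w\le\Psi(\varnothing)+\sum_j\bar{\rho}_j(\varnothing)x_j$, which is valid for $\Y$ by \cref{prop:X-SC} applied to $\Psi$; since the facet $w\le c+\alpha^\top x$ is not dominated by it (both are facets, or the facet is at least as strong), comparing coefficients along the edge directions $\boldsymbol e_j$ of $\conv(\Y)$ forces $\alpha_j\le\bar{\rho}_j(\varnothing)$.

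Symmetrically, for the lower bound $\alpha_j\ge\bar{\rho}_j([n]\backslash j)$ I would use the other submodular inequality, $\CS$-\SI with $\CS=[n]$, i.e.\ $w\le\Psi([n])-\sum_j\bar{\rho}_j([n]\backslash j)(1-x_j)$, which is also valid for $\Y$, and run the same comparison near the vertex $x=\boldsymbol 1$ of $\conv(\Y)$: the facet $w\le c+\alpha^\top x$ must be valid at $(\Psi([n]),\boldsymbol 1)$ and at $(\Psi([n]\backslash j),\boldsymbol 1-\boldsymbol e_j)$, and subtracting these two validities gives $\alpha_j\ge\Psi([n])-\Psi([n]\backslash j)=\bar{\rho}_j([n]\backslash j)$ directly. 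I would then combine: the two chains of points used are genuinely in $\Y$ (their $w$-coordinates equal $\psi$ at the respective supports), so no appeal to submodularity is even needed for the two one-sided bounds once the normalization $w$-coefficient $=1$ is in hand — submodularity enters only to guarantee (via \cref{prop:X-SC}) that the submodular inequalities describe $\Y$, which is what lets me assert the facet is not \emph{strictly} dominated and hence its coefficients lie between those of \SI and \SIbar.

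The main obstacle I anticipate is the normalization step and the sign of $c$: justifying rigorously that every nontrivial facet has a positive (hence, after scaling, unit) $w$-coefficient requires care about the recession cone of $\conv(\Y)$ — the direction $(w,x)=(-1,\boldsymbol 0)$ is a ray, so any valid inequality $\beta_0 w\le c+\alpha^\top x$ needs $\beta_0\ge 0$, and $\beta_0=0$ collapses it to an inequality over the $x$-cube, which cannot be facet-defining for the full-dimensional mixed polytope; making this airtight (including that $\conv(\Y)$ is full-dimensional, which follows from the affinely independent points exhibited for $\X$ in \cref{rmk:dim-X-S0-S1} specialized to $\CS_0=\CS_1=\varnothing$) and then deriving $c\ge 0$ — likely by observing $c\ge\Psi(\varnothing)$ and then, if $\Psi(\varnothing)<0$, showing the facet is actually dominated by the \SIbar at $\varnothing$ shifted appropriately, contradicting facetness unless equality of intercepts holds — is the delicate part. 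Everything else is a short two-point subtraction argument.
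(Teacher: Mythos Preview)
Your normalization step --- showing that any nontrivial facet has a strictly positive $w$-coefficient, then scaling to $1$ --- matches the paper's and is fine. The gap is in the coefficient bounds. For the lower bound $\alpha_j \geq \bar{\rho}_j([n]\backslash j)$, you propose plugging in validity at $(\Psi([n]), \boldsymbol{1})$ and at $(\Psi([n]\backslash j), \boldsymbol{1}-\boldsymbol{e}_j)$ and ``subtracting these two validities.'' But both are $\leq$-inequalities pointing the same way, and subtracting them yields nothing --- exactly the obstruction you yourself flagged two paragraphs earlier for the upper bound. The alternative ``domination'' route via \SI/\SIbar does not work either: knowing that the facet is not dominated by $w\le \Psi(\varnothing)+\sum_j \bar{\rho}_j(\varnothing) x_j$ does not force $\alpha_j \leq \bar{\rho}_j(\varnothing)$, since two incomparable valid inequalities can have their $j$-th coefficients in either order.

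The paper's argument supplies the missing ingredient: \emph{tightness} at a point with the right value of $x_j$. Because the facet is not $x_j\le 1$, there exists a tight point $(\psi(\bar{x}),\bar{x})\in\Y$ with $\bar{x}_j=0$, so $\psi(\bar{x})=c+\alpha^\top \bar{x}$ with equality. Validity at the neighbor $(\psi(\bar{x}+\boldsymbol{e}_j),\bar{x}+\boldsymbol{e}_j)$ then gives $\psi(\bar{x}+\boldsymbol{e}_j)\le \psi(\bar{x})+\alpha_j$, hence $\alpha_j\ge\bar{\rho}_j(\CS^{\bar{x}})\ge\bar{\rho}_j([n]\backslash j)$ by submodularity. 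Symmetrically, the facet is not $x_j\ge 0$, so there is a tight point with $\hat{x}_j=1$, and validity at $\hat{x}-\boldsymbol{e}_j$ gives $\alpha_j\le\bar{\rho}_j(\CS^{\hat{x}}\backslash j)\le\bar{\rho}_j(\varnothing)$. So one needs one equality and one inequality, not two inequalities; and submodularity is applied directly to the marginal at the tight point, not through any global comparison of formulations.

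On $c\ge 0$: you are right to be uneasy. Neither your sketch nor the paper's proof actually derives it, and for a general submodular $\Psi$ it can fail (e.g.\ $\Psi\equiv -1$ has only the nontrivial facet $w\le -1$). It does hold whenever $\Psi(\varnothing)\ge 0$ --- as is the case for the paper's function $\Phi$ --- since validity at $(\Psi(\varnothing),\boldsymbol{0})$ immediately gives $c\ge\Psi(\varnothing)\ge 0$.
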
 
\begin{proof}
	Let $\beta w \leq c+\alpha^\top x$  be a nontrivial facet-defining inequality of $\conv(\Y)$.
	{Since the projection of $\Y$ onto the $x$ space reads $\{0,1\}^n$, $\beta\neq 0$ must hold. }
	For any $w \in \mathbb{R}_-$, it follows $(w, \boldsymbol{0}) \in \Y$, which implies $\beta > 0$.
	By scaling the inequality if needed, we can assume $\beta =1$.
	Next, we further show that $\bar{\rho}_j([n]\backslash j) \leq \alpha_j \leq \bar{\rho}_j(\varnothing) $ holds for all $j \in [n]$. 
	Observe that for a point $(w, x) \in \Y$ satisfying  $ w =c+ \alpha^\top x$, $w = \psi(x)$ must hold. 
	Since $ w \leq c+\alpha^\top x$ is a nontrivial facet-defining inequality of $\conv(\Y)$ and differs from $x_j \leq 1$,
	there exists a point $(\psi(\bar{x}), \bar{x})$ in $\Y$ satisfying $\bar{x}_j = 0$ and $\psi(\bar{x}) =c+ \alpha^{\top} \bar{x}$.
	Clearly, point $(\psi(\bar{x}+\boldsymbol{e}_j),\bar{x} + \boldsymbol{e}_j)$ is also in $\Y$.  
	Substituting $(\psi(\bar{x}+\boldsymbol{e}_j),\bar{x} + \boldsymbol{e}_j)$ into $ w \leq c+\alpha^\top x $ and subtracting $\psi(\bar{x}) = c+\alpha^\top \bar{x}$, we obtain $\alpha_j \geq \psi(\bar{x}+\boldsymbol{e}_j)-\psi(\bar{x}) \geq \bar{\rho}_j([n]\backslash j) $, where the last inequality follows from {the submodularity of $\Psi$}.
	On the other hand, since $ w \leq c+ \alpha^\top x $ is a nontrivial facet-defining inequality of $\conv(\Y)$ and differs from $x_j \geq 0$,
	there exists a point $(\psi(\hat{x}), \hat{x})$ in $\Y$ satisfying $\hat{x}_j = 1$ and $ \psi(\hat{x}) = c+ \alpha^\top \hat{x}$.
	Clearly, point $(\psi(\hat{x}-\boldsymbol{e}_j), \hat{x}-\boldsymbol{e}_j)$ is also in $\Y$.
	Substituting $(\psi(\hat{x}-\boldsymbol{e}_j), \hat{x}-\boldsymbol{e}_j)$ into $ w \leq c+ \alpha^\top x $ and subtracting $ \psi(\hat{x}) = c+ \alpha^\top \hat{x} $, we obtain $\alpha_j \leq \psi(\hat{x})-\psi(\hat{x}-\boldsymbol{e}_j) \leq \bar{\rho}_j(\varnothing)$, where the last inequality follows from {the submodularity of $\Psi$}.
\end{proof}
\cref{thm:nontrivial} enables to provide lower and upper bounds for the lifting coefficients in inequalities \eqref{sc1-lifted} and \eqref{sc2-lifted}.
In particular, 
since $\Phi$ is a submodular function and inequalities  \eqref{sc1-lifted} and \eqref{sc2-lifted}  are nontrivial facet-defining inequalities of $\conv(\Y)$, we can derive from \cref{thm:nontrivial} that $\eta_j \geq \rho_j([n]\backslash j)$ for all $j \in \CS$ and $\zeta_j \leq \rho_j(\varnothing)$ for all $j \in [n]\backslash\CS$.
As a result, 
\begin{corollary}\label{cor:nontrivial-X}
	For any $\CS \subseteq [n]$, inequalities \eqref{sc1-lifted} and \eqref{sc2-lifted} are stronger than the submodular inequalities \eqref{sc1} and \eqref{sc2}, respectively.
\end{corollary}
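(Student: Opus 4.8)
The plan is to derive \cref{cor:nontrivial-X} from the coefficient bounds of \cref{thm:nontrivial} and then compare the two families of inequalities term by term over the box $[0,1]^n$. First I would instantiate \cref{thm:nontrivial} with $\Psi = \Phi$. The function $\Phi$ is submodular by \cref{thm:submodularity}, and for $x \in \{0,1\}^n$ we have $\psi(x) = \Phi(\CS^x) = \phi(x)$, so the set $\Y$ of \cref{thm:nontrivial} is exactly $\X$ from \eqref{def:X} and $\bar{\rho}_j(\cdot) = \rho_j(\cdot)$. Next, by \cref{cor:sc-lifted-facet-def} both \eqref{sc1-lifted} and \eqref{sc2-lifted} are facet-defining for $\conv(\X)$, and each is nontrivial: its coefficient on $w$ equals $1$, so it is neither $x_j \le 1$ nor $x_j \ge 0$. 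Hence \cref{thm:nontrivial} applies to both.

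Then I would match coefficients with the normal form $w \le c + \alpha^\top x$ of \cref{thm:nontrivial}. Expanding $-\eta_j(1-x_j)$ in \eqref{sc1-lifted} shows that its $x_j$-coefficient is $\eta_j$ for $j \in \CS$ (and $\rho_j(\CS)$ for $j \in [n]\backslash\CS$), so \cref{thm:nontrivial} gives $\eta_j = \alpha_j \ge \bar{\rho}_j([n]\backslash j) = \rho_j([n]\backslash j)$ for every $j \in \CS$. Symmetrically, the $x_j$-coefficient of \eqref{sc2-lifted} is $\zeta_j$ for $j \in [n]\backslash\CS$, so \cref{thm:nontrivial} gives $\zeta_j = \alpha_j \le \bar{\rho}_j(\varnothing) = \rho_j(\varnothing)$ for every $j \in [n]\backslash\CS$.

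Finally I would run the comparison. Fix any $(w,x)$ with $x \in [0,1]^n$. For $j \in \CS$ we have $1 - x_j \ge 0$, hence $\eta_j(1-x_j) \ge \rho_j([n]\backslash j)(1-x_j)$; since the $\Phi(\CS)$ term and the $\sum_{j \in [n]\backslash\CS}\rho_j(\CS)x_j$ term are identical in \eqref{sc1} and \eqref{sc1-lifted}, the right-hand side of \eqref{sc1-lifted} is at most that of \eqref{sc1}. Thus every $(w,x)$ satisfying \eqref{sc1-lifted} also satisfies \eqref{sc1}, i.e., \eqref{sc1-lifted} dominates \eqref{sc1}. The same argument with $x_j \ge 0$ and $\zeta_j \le \rho_j(\varnothing)$, together with the fact that the $\Phi(\CS)$ term and the $-\sum_{j \in \CS}\rho_j(\CS\backslash j)(1-x_j)$ term coincide in \eqref{sc2} and \eqref{sc2-lifted}, shows that \eqref{sc2-lifted} dominates \eqref{sc2}.

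This argument has no genuinely hard step; the only points requiring care are (i) verifying the hypotheses of \cref{thm:nontrivial}, namely that $\Y = \X$ and that \eqref{sc1-lifted} and \eqref{sc2-lifted} are nontrivial facet-defining, the latter being precisely \cref{cor:sc-lifted-facet-def}, and (ii) pairing the correct coordinates ($j \in \CS$ for \eqref{sc1-lifted}, $j \in [n]\backslash\CS$ for \eqref{sc2-lifted}) with the correct side of the two-sided bound $\rho_j([n]\backslash j) \le \alpha_j \le \rho_j(\varnothing)$, so that the sign of $1-x_j$ (resp.\ $x_j$) makes the lifted inequality the \emph{tighter} of the two.
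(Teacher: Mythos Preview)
Your proof is correct and follows exactly the same approach as the paper: instantiate \cref{thm:nontrivial} with $\Psi=\Phi$ (using \cref{thm:submodularity} for submodularity and \cref{cor:sc-lifted-facet-def} for facet-definingness) to obtain $\eta_j \ge \rho_j([n]\backslash j)$ and $\zeta_j \le \rho_j(\varnothing)$, then conclude dominance by term-by-term comparison. The paper merely sketches this in one sentence preceding the corollary, whereas you spell out the coefficient matching and the sign argument explicitly.
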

As an immediate result of \cref{cor:nontrivial-X},
the formulation \eqref{prob:SC} for \eqref{prob:original-max-S} can be  strengthened as
\begin{subequations}\label{prob:SC-lifted}
	\begin{align}
			\max \ & \sum_{i\in [m]}b_i w_i -\sum_{j \in [n]} f_jx_j \nonumber\\
			\text{s.t.} \   
			& w_i \le \Phi_i(\CS) + \sum_{j \in [n]\backslash\CS} \rho_{ij}(\CS) x_j - \sum_{j \in \CS} \eta^\sigma_{ij} (1 - x_j), \
			\forall \ i \in [m], \ \CS \subseteq [n], \ \sigma \in \order(\CS), \label{cons:SC1-lifted-all}\\
			& w_i \le \Phi_i(\CS) -\sum_{j \in\CS} \rho_{ij}(\CS \backslash j) (1 - x_j) + \sum_{j \in [n]\backslash\CS} \zeta^\sigma_{ij} x_j, \ \forall \ i \in [m], \ 
			\CS \subseteq [n], \ \sigma \in \order([n]\backslash \CS),
			\label{cons:SC2-lifted-all}\\
			& x_j \in \{0,1\}, \ \forall \ j \in [n]. \nonumber
		\end{align}
\end{subequations}
Here, for $\CS \subseteq [n]$, $\order(\CS)$ represents the set of all orderings of $\CS$; 
for each inequality of \eqref{cons:SC1-lifted-all} (respectively, \eqref{cons:SC2-lifted-all}) with the lifting ordering $\sigma$ of $\CS$ (respectively, $[n]\backslash \CS$),
the lifting coefficients $\eta^\sigma_{ij}$ (respectively, $\zeta^\sigma_{ij}$) are computed by solving the lifting problems of the form \eqref{sc1liftprob} (respectively, \eqref{sc2liftprob}).
By \cref{cor:nontrivial-X},  formulation \eqref{prob:SC-lifted} is much stronger than formulation \eqref{prob:SC} in terms of providing a better \LP relaxation bound, which makes it much more computationally solvable in a \BnC framework; see \cref{subsect:compu-submod-lifted} further ahead.

\subsection{An algorithm for the computation of lifted (submodular)  inequalities}
\label{subsect:complexity-calculation-lifted}

To compute the lifting coefficients of the lifted inequality \eqref{ineq:lifted} (or \eqref{sc1-lifted} and \eqref{sc2-lifted} for the special cases), we need to solve a sequence of lifting problems \eqref{liftprob:up-lifting} or \eqref{liftprob:down-lifting}, which take the form of
\begin{equation}\label{def:v-N0-N1}
	\max\left\{ 
	w - \sum_{j \in [n] \backslash (\CN_0 \cup \CN_1)} \alpha_j x_j : w\leq \phi(x), ~x_j =0,~\forall~j \in \CN_0, ~x_{j}=1,~\forall~j \in \CN_1
	\right\},
\end{equation}
where $\CN_0$ and $\CN_1$ are two disjoint subsets of $[n]$.
Problem \eqref{def:v-N0-N1} can be simplified as follows.
First, we observe that $w=\phi(x)$ holds for any optimal solution $(w,x)$ of problem \eqref{def:v-N0-N1}, which, together with the definition of $\phi(x) $ in \eqref{phidef}, enables to rewrite problem \eqref{def:v-N0-N1} as 
\begin{equation}\label{v-N0-sN1-xy}
	\begin{aligned}
		\max &  \left\{ \frac{\sum_{j \in [n]} u_j y_j}{\sum_{j \in [n]} u_j y_j + u_0} - \sum_{j \in [n] \backslash (\CN_0 \cup \CN_1)} \alpha_j x_j : 
		\sum_{j \in [n]} y_j \le \gamma, \ 
		y_j \le x_j, \ \forall \ j \in [n], \ \right.\\
		& \left. 
		\qquad \qquad   
		\phantom{\sum_{j \in [n]} y_j \le \gamma}
		x \in \{0,1\}^n, \ 
		y \in \{0,1\}^n, \
		x_j = 0,\  \forall \ j \in \CN_0, \ 
		x_j = 1, \ \forall \ j \in \CN_1
		\right\}.
	\end{aligned}
\end{equation}
Second, for each $j \in [n] \backslash (\CN_0 \cup \CN_1)$, we can set $x_j = 1$ if $\alpha_j \leq 0$ and $x_j = y_j$ otherwise in problem \eqref{v-N0-sN1-xy}.
Thus, by removing the fixed variables and redundant constraints from problem \eqref{v-N0-sN1-xy}, we can obtain an equivalent problem taking the form of    
\begin{equation}\label{prob:lift-NPhard}
	\nu^* = \max \left\{ 
	\frac{\sum_{j = 1}^{p} u_j y_j}{\sum_{j = 1}^p u_j y_j + u_0} - \sum_{j = 1}^{q} \alpha_j y_j : 
	\sum_{j = 1}^p y_j \le \gamma, \ 
	y \in \{0,1\}^p
	\right\},
\end{equation}
where $q \leq p$.
Without loss of generality, we assume $u_j \in \mathbb{Z}_+$ for all $j \in {[p] \cup 0}$ throughout this subsection.
Notice that here we do not require $u_1 \geq u_2 \ge \cdots \geq u_p$.

The following theorem shows the intrinsic difficulty of solving problem \eqref{prob:lift-NPhard}.

\begin{theorem}\label{thm:np-hard}
	Problem \eqref{prob:lift-NPhard} is NP-hard even when $\gamma \ge p$.
\end{theorem}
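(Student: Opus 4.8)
The plan is to reduce the subset-sum (or partition) problem to the special case $\gamma \geq p$ of problem \eqref{prob:lift-NPhard}. When $\gamma \geq p$, the cardinality constraint $\sum_{j=1}^p y_j \leq \gamma$ is vacuous, so the problem becomes the unconstrained binary optimization
\begin{equation*}
	\nu^* = \max\left\{ \frac{u(\CS^y)}{u(\CS^y) + u_0} - \sum_{j=1}^q \alpha_j y_j : y \in \{0,1\}^p \right\},
\end{equation*}
which asks us to choose a subset $R \subseteq [p]$ maximizing $\frac{u(R)}{u(R)+u_0} - \sum_{j \in R \cap [q]} \alpha_j$. The first term depends only on $u(R)$ and is increasing in it, so the tension is entirely between wanting a large $u(R)$ and paying the linear penalties $\alpha_j$.

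First I would fix the gadget: take $q = p$ (so every chosen index incurs a penalty), let $u_j = a_j$ for a given subset-sum instance $a_1,\dots,a_p \in \Z_+$ with target $\beta = \tfrac12\sum_j a_j$, set $u_0$ to a suitably large integer (polynomial in the input), and choose all $\alpha_j$ equal to a common value $\alpha > 0$ carefully tuned so that the marginal gain of the concave term $t \mapsto \tfrac{t}{t+u_0}$ around $t = \beta$ exactly balances $\alpha$. With $\alpha$ chosen this way, the objective $\frac{u(R)}{u(R)+u_0} - \alpha|R|$ — wait, this has a second knob $|R|$ that I do not want. So instead I would keep the penalties proportional to the weights: set $\alpha_j = \alpha \, a_j$ so that $\sum_{j \in R}\alpha_j = \alpha\, u(R)$, and the objective collapses to a function of the single scalar $t = u(R)$, namely $h(t) = \tfrac{t}{t+u_0} - \alpha t$. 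This is strictly concave in $t$, so it has a unique real maximizer $t^\star = \sqrt{u_0/\alpha} - u_0$; choosing $\alpha$ so that $t^\star = \beta$ forces the optimum of the discrete problem to be attained exactly at those subsets $R$ with $u(R) = \beta$, i.e. at solutions of the subset-sum instance, and the optimal value $h(\beta)$ is then a quantity we can write down in advance. Hence the subset-sum instance is a YES-instance if and only if $\nu^* = h(\beta)$ (or $\nu^* \ge h(\beta)$, using concavity to argue no other $t$ can reach this value).

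The remaining bookkeeping is to verify that this reduction is genuinely polynomial: $\alpha$ is a rational number whose numerator and denominator are polynomially bounded (one should clear denominators and rephrase everything, including the comparison $\nu^* \geq h(\beta)$, over a common integer denominator so that no irrationalities or exponentially large numbers appear — note $t^\star = \beta$ means $u_0/\alpha = (\beta + u_0)^2$, so $\alpha = u_0/(\beta+u_0)^2$ is rational with polynomial-size encoding), and the strict concavity of $h$ guarantees that for every integer $t \neq \beta$ attainable as some $u(R)$ we have $h(t) < h(\beta)$ by a gap that need not be quantified since we only compare the optimal value against the known target $h(\beta)$. I expect the main obstacle to be exactly this quantitative control: ensuring $\alpha$ and the target value $h(\beta)$ have polynomial bit-length and that the "if and only if" is clean, i.e. that no subset with $u(R) \neq \beta$ can tie the optimum. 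Strict concavity of $h$ handles the tie issue structurally, so the real work is just arithmetic hygiene in choosing $u_0$ (e.g. $u_0$ a power of two or a simple polynomial in $\sum_j a_j$) to keep everything integral after clearing denominators.
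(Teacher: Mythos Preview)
Your proposal is correct and is essentially the same reduction as the paper's. The paper specializes your scheme by taking $u_0 = \beta$ (in your notation), which makes $\alpha = u_0/(\beta+u_0)^2 = 1/(4\beta)$, hence $\alpha_j = a_j/(4\beta)$, and the target value becomes $h(\beta) = \tfrac{\beta}{2\beta} - \tfrac{\beta}{4\beta} = \tfrac14$; all the ``arithmetic hygiene'' you worry about then disappears, since every datum is a ratio of integers bounded by $\sum_j a_j$ and the comparison is simply $\nu^* = 1/4$ versus $\nu^* < 1/4$.
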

\noindent The proof of \cref{thm:np-hard} is provided in Appendix \ref{sect:appendix-nphard}.
\cref{thm:np-hard} shows that unless $\text{P}= \text{NP}$, there does not exist a polynomial time algorithm to solve problem \eqref{prob:lift-NPhard}. 
In the following, we will therefore concentrate on the development of an efficient pseudo-polynomial time algorithm to solve problem \eqref{prob:lift-NPhard}, and thus to compute the lifted inequality \eqref{ineq:lifted}.

For $\tau =0, 1, \ldots, \gamma$, define %
\begin{align}
	& f(\tau) = \max \left\{ \sum_{j = q+1}^{p} u_j y_j: 
	\sum_{j = q+1}^p y_j \le \tau, \ y_j \in \{0,1\}, \ \forall \ j = q+1, q+2, \dots, p
	\right\}, \label{f_taudef}
\end{align}
and for $t=0, 1, \ldots, q$, {$\lambda \in\mathbb{Z}_+$}, and $\tau= 0, 1, \dots, \min\{\gamma, t\}$, define
\begin{align}
	& Z_t(\lambda,\tau) = \max \left\{
	\frac{\sum_{j = 1}^t u_j y_j + \lambda}{\sum_{j = 1}^t u_j y_j + \lambda + u_0} - \sum_{j = 1}^t \alpha_j y_j: 
	\sum_{j = 1}^t y_j = \tau, \ y \in \{0,1\}^t
	\right\}.\label{Z_tdef}
\end{align}
Then problem \eqref{prob:lift-NPhard} reduces to
\begin{equation}\label{prob:lift-NPhard-tau}
	\nu^*= \max \left\{
	Z_q(f(\gamma - \tau), \tau): \tau = 0, 1, \dots, \min\{\gamma, q\}
	\right\}.
\end{equation}
Thus, problem \eqref{prob:lift-NPhard} can be solved by computing $\{f(\tau)\}$ and $\{Z_{t}(\lambda, \tau)\}$.

Letting $j_{q+1}, j_{q+2}, \ldots, j_{p}$ be a permutation of $\{q+1, q+2, \ldots, p\}$ such that $u_{j_{q+1}} \geq u_{j_{q+2}} \geq \cdots \geq u_{j_p}$, a closed formula for $f(\tau)$ can be given by 
\begin{equation}\label{fdef2}
	f(\tau)= \sum_{r=q+1}^{\min\{q+\tau, p\}} u_{j_{r}}.
\end{equation}
As for $Z_{t}(\lambda, \tau)$, we first give two trivial cases for which a closed formula can be derived:
\begin{equation}\label{Zinit}
	Z_{t}(\lambda, \tau)=\left\{
	\begin{aligned}
		& \frac{\lambda}{\lambda + u_0}, &~\text{if}~\tau =0;\\
		& \frac{\sum_{j = 1}^{\tau} u_j + \lambda}{\sum_{j = 1}^{\tau} u_j + \lambda + u_0} - \sum_{j = 1}^{\tau} \alpha_j, & ~\text{if}~\tau=t.
	\end{aligned}
	\right. 
\end{equation} 
For the other cases, we have the following recursive formula
\begin{equation}
	 Z_t(\lambda, \tau) = \max \left\{ Z_{t-1}(\lambda, \tau), \ - \alpha_t + Z_{t-1}(\lambda + u_t, \tau - 1)\right\},\label{Zrecur}
\end{equation}
where $t =  1, 2, \dots, q$ and $\tau = 1, 2, \dots, \min\{\gamma, t - 1\}$.

We now analyze the computational complexity of solving problem \eqref{prob:lift-NPhard}. 
Given any $t \in \{1, 2, \dots, q\}$ and $\tau \in \{1, 2, \dots, \min\{\gamma, t-1\}\}$, let $Z_{t'}(\lambda', \tau')$ be encountered during the backtracking of \eqref{Zrecur} where $t' \leq  t-1$, $\tau' \leq \tau$, and $\lambda' = \lambda + u_{j_1}+ u_{j_2}+\cdots + u_{j_{\tau-\tau'}}$ holds for some distinct $j_1, j_2, \ldots, {j_{\tau-\tau'}} \in \{t'+1, t'+2,\ldots, t\}$.
Obviously, $\lambda' \leq \lambda + M_\tau$ {holds}, where $M_\tau$ is the summation of the $\tau$ largest elements in $u_1, u_2, \ldots, u_t$.
Therefore, the computation of $Z_t(\lambda, \tau)$ can be implemented with the complexity of $\mathcal{O}(t\tau({\lambda+}M_\tau))$.
It is easy to see that using the sorting $u_{j_{q+1}} \geq u_{j_{q+2}} \geq\cdots \geq u_{j_p}$, the computation of $f(\tau)$, $\tau =0, 1, \ldots, \min\{p-q, \gamma\}$ in \eqref{fdef2} can be done in $\mathcal{O}(\min\{p-q, \gamma\})$.
Therefore, the computation of $\nu^*$ in \eqref{prob:lift-NPhard-tau} can be implemented with the complexity of $\mathcal{O}((p-q)\log (p-q)+ q\gamma M^*)$,
where $M^*$ is the summation of the $\gamma$ largest elements in $u_1, u_2, \ldots, u_p$.
\begin{proposition}
	Problem \eqref{prob:lift-NPhard} can be solved in $\mathcal{O}((p-q)\log (p-q)+ q\gamma M^*)$.
\end{proposition}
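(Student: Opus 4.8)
The plan is to certify that the dynamic program already outlined in this subsection -- the decomposition \eqref{prob:lift-NPhard-tau}, the closed form \eqref{fdef2} for $f$, the base cases \eqref{Zinit}, and the recursion \eqref{Zrecur} -- computes $\nu^*$ correctly, and then to bound the number of states it visits and the cost per state.

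First I would prove the decomposition \eqref{prob:lift-NPhard-tau}. Only the variables $y_1,\dots,y_q$ are penalized in \eqref{prob:lift-NPhard}, so condition on the number $\tau$ of them set to one; since $s\mapsto s/(s+u_0)$ is nondecreasing on $\mathbb{R}_+$ and the block $\{q+1,\dots,p\}$ carries no penalty, an optimal completion spends the remaining $\gamma-\tau$ units of cardinality on the largest-utility coordinates of that block, contributing exactly $f(\gamma-\tau)$ to the numerator; the best the penalized block can then do, with numerator offset $f(\gamma-\tau)$ and cardinality $\tau$, is by definition $Z_q(f(\gamma-\tau),\tau)$, so maximizing over $\tau\in\{0,\dots,\min\{\gamma,q\}\}$ yields $\nu^*$. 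Formula \eqref{fdef2} is immediate after sorting $u_{q+1},\dots,u_p$ nonincreasingly. The base cases \eqref{Zinit} are read off the definition \eqref{Z_tdef}, and the recursion \eqref{Zrecur} follows by splitting on the value of $y_t$ in \eqref{Z_tdef}: $y_t=0$ leaves $Z_{t-1}(\lambda,\tau)$, while $y_t=1$ pays $\alpha_t$ and raises the numerator offset from $\lambda$ to $\lambda+u_t$, giving $Z_{t-1}(\lambda+u_t,\tau-1)$. The point worth stressing is that the additional utility is carried entirely inside the parameter $\lambda$, which is what makes the recursion self-contained.

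Second -- and this is the delicate part -- I would bound the work. Sorting $u_{q+1},\dots,u_p$ and forming the prefix sums in \eqref{fdef2} costs $\mathcal{O}((p-q)\log(p-q))$. For the $Z$ part, note that unrolling \eqref{Zrecur} from a call $Z_q(\lambda_0,\tau_0)$ only queries states $(t',\lambda',\tau')$ with $t'\le q$, $\tau'\le\tau_0$, and $\lambda'$ equal to $\lambda_0$ plus a sum of at most $\tau_0$ of the numbers $u_1,\dots,u_q$; hence $\lambda'\in[\lambda_0,\lambda_0+M_{\tau_0}]$. Since the only top-level calls we need are $Z_q(f(\gamma-\tau),\tau)$, and $f(\gamma-\tau)+M_\tau$ is the sum of $\gamma-\tau$ entries of the penalty-free block plus $\tau$ entries of the penalized block -- that is, of some $\gamma$ entries of $u_1,\dots,u_p$ -- we get $f(\gamma-\tau)+M_\tau\le M^*$. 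Thus every offset ever encountered is a nonnegative integer at most $M^*$ (this is where the standing assumption $u_j\in\mathbb{Z}_+$ enters), so a single memoized table indexed by $(t',\lambda',\tau')\in\{0,\dots,q\}\times\{0,\dots,M^*\}\times\{0,\dots,\min\{\gamma,q\}\}$ suffices; each of its $\mathcal{O}(q\gamma M^*)$ entries is filled in $\mathcal{O}(1)$ from \eqref{Zinit}--\eqref{Zrecur} after an $\mathcal{O}(q)$ precomputation of the prefix sums in \eqref{Zinit}. Adding the $\mathcal{O}(\min\{\gamma,q\})$ cost of the outer maximization in \eqref{prob:lift-NPhard-tau} gives the claimed $\mathcal{O}((p-q)\log(p-q)+q\gamma M^*)$.

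The main obstacle is exactly this last accounting: one must confirm that the offsets $\lambda$ stay $\mathcal{O}(M^*)$ throughout, not merely $\mathcal{O}(f(\gamma)+M^*)$ or something cruder, and this is precisely why the split ``$\gamma-\tau$ units from the penalty-free block, $\tau$ from the penalized block'' is the right one. I would also remark in passing that the degenerate regimes ($\gamma\ge p$, or $q=0$) need no separate treatment, since the same formulas specialize correctly, and that nonnegativity of the $u_j$ is what makes $\{0,\dots,M^*\}$ a valid (pseudo-polynomial) index range.
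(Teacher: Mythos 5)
Your proposal is correct and follows essentially the same route as the paper: reduce \eqref{prob:lift-NPhard} via \eqref{prob:lift-NPhard-tau}, compute $f$ by sorting, and evaluate $Z_t(\lambda,\tau)$ by the recursion \eqref{Zinit}--\eqref{Zrecur} over integer offsets bounded by $M^*$, yielding $\mathcal{O}((p-q)\log(p-q)+q\gamma M^*)$. Your accounting is in fact slightly more explicit than the paper's (the observation $f(\gamma-\tau)+M_\tau\le M^*$ and the single memoized table shared across all $\tau$), but it is the same argument.
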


To compute the lifted inequality \eqref{ineq:lifted}, one needs to solve $|\CS_0|+|\CS_1|$ lifting problems of the form \eqref{prob:lift-NPhard}.
It should be noted that previously computed $\{Z_{t}(\lambda, \tau)\}$ can be used for the computation of the current lifting coefficient. 
Therefore, the computation of the lifted inequality \eqref{ineq:lifted} can be implemented with the complexity of $\mathcal{O}(n^2\log n+ n\gamma \bar{M}^*)$,  where $\bar{M}^*$ is the summation of the $\gamma$ largest elements in $u_1, u_2, \ldots, u_n$.

\section{Implementation details of the \BnC algorithms}\label{sect:Imple}

In this section, we develop the \BnC algorithms for solving the \MILP formulations \eqref{prob:SC} and \eqref{prob:SC-lifted} of \eqref{prob:original-max-S}.

\subsection{A two-stage approach for the implementation}
We use  a two-stage approach to implement the \BnC algorithms. 
In the first stage, we solve the \LP relaxation of formulation \eqref{prob:SC} (respectively,  \eqref{prob:SC-lifted}) using a cutting plane algorithm.
Specifically, at each iteration, we solve the \LP problem and add inequalities \eqref{cons:SC1-all}-\eqref{cons:SC2-all} (respectively,  \eqref{cons:SC1-lifted-all}-\eqref{cons:SC2-lifted-all}) violated by the current solution to the \LP problem (where the separation procedure for inequalities \eqref{cons:SC1-all}-\eqref{cons:SC2-all} and \eqref{cons:SC1-lifted-all}-\eqref{cons:SC2-lifted-all} will be detailed in the next {subsection}).
The procedure is repeated until no more violated cut is found.
During the first stage, we also implement a rounding  algorithm to find a  high-quality integral  feasible solution for problem \eqref{prob:SC} or \eqref{prob:SC-lifted}.
In particular, at each iteration, we round the values $x_j^*$ of the current solution $x^*$ to their nearest integers, which guarantees to obtain a feasible solution. 

At the end of the first stage, all constructed inequalities saturated by the final optimal solution of the \LP problem are added as constraints to the relaxed \MILP problem of  \eqref{prob:SC} or \eqref{prob:SC-lifted}.
In addition, the best-found integral feasible solution obtained in the first stage serves as an initial feasible solution for the problem. 
We then go to the second stage and use a \BnC algorithm to solve problem \eqref{prob:SC}  (respectively, problem \eqref{prob:SC-lifted}) in which inequalities  \eqref{cons:SC1-all}-\eqref{cons:SC2-all} (respectively, inequalities \eqref{cons:SC1-lifted-all}-\eqref{cons:SC2-lifted-all}) are separated for integral solutions encountered inside the search tree.

The motivation for implementing the first stage is twofold.
First, it can collect effective cuts for the initialization of the \MILP problem \eqref{prob:SC} or \eqref{prob:SC-lifted}, which helps to favor root-node variable fixing
and the construction of internal cuts in the subsequent \BnC stage; see \cite{Fischetti2016,Bodur2017a} for detailed discussions.
Second, the (possibly) high-quality feasible solution found in the first stage can provide a tight lower bound {that helps to prune} the uninteresting part of the search tree constructed by the \BnC algorithm, thereby improving the overall solution process.

\subsection{Separation of submodular and lifted submodular inequalities}
\label{subsect:sepa}

Given a point $(w^\ast, x^\ast) \in \R \times [0,1]^n$, the separation problem of the submodular inequalities \eqref{cons:SC1-all}-\eqref{cons:SC2-all} or lifted submodular inequalities \eqref{cons:SC1-lifted-all}-\eqref{cons:SC2-lifted-all} asks to find inequalities violated by $(w^\ast, x^\ast) \in \R \times [0,1]^n$ or prove none exists.
For simplicity of discussions, here we omit the index $i$ and consider the separation of the submodular inequalities \eqref{sc1}-\eqref{sc2} and the lifted submodular inequalities \eqref{sc1-lifted} and \eqref{sc2-lifted}.

We first discuss the separation of the submodular inequalities   \eqref{sc1}-\eqref{sc2}.
Note that for $x^\ast \in \{0,1\}^n$, an exact separation of inequalities \eqref{sc1}-\eqref{sc2} can be done by computing ${\Phi(\CS)}$ where $\CS := \{ j \in [n]\, : \, x_j^* =1\}$, and checking whether $w^\ast > {\Phi(\CS)}$ holds.
If $w^\ast >  {\Phi}(\CS)$, the submodular inequalities  \eqref{sc1}-\eqref{sc2} defined by $\CS$ are violated by $(w^\ast, x^\ast) $; otherwise, no violated inequality exists.

For $x^\ast \notin \{0,1\}^n$ that appears in the stage of solving the \LP relaxation of problem \eqref{prob:SC}, it is generally difficult to solve the separation problem exactly.
We therefore use a heuristic algorithm detailed as follows. 
{Let $\CS_1:=\{ j \in [n]\,:\, x_j^* =1 \}$, $\CF := \{j \in [n]: 0 < x^\ast_j < 1\}$, and $j_1, j_2, \dots, j_{|\CF|}$ be a permutation of $\CF$ such that $x^\ast_{j_1} \geq x^\ast_{j_2}\ge \cdots \ge x^\ast_{j_{|\CF|}}$. Initially, we set $\CS:=\CS_1$.}
Then, for each $\ell = 1, 2, \dots, |\CF|$, we iteratively check whether the submodular inequality  \eqref{sc1} or  \eqref{sc2} defined by $\CS \cup j_{\ell}$ can achieve a larger violation than {that achieved by $\CS$}, and if so, we update $\CS:=\CS\cup j_\ell$.
In the end, if the submodular inequality  \eqref{sc1} or  \eqref{sc2} defined by $\CS$ is violated by $(w^*, x^*)$, we add the inequality into the problem; otherwise, we claim that no violated inequality is found.
Note that for  the submodular  inequality \eqref{sc2} defined by $\CS$, we can apply the argument in \cref{prop:SC-simplification} (ii) to obtain a (possibly)  stronger inequality, i.e., the  $\bar{\CS}^\gamma$-\SIbar, where  $\bar{\CS}^\gamma$ is defined by \eqref{def:k-S-ell}.

Compared with the separation of the submodular inequalities \eqref{sc1}-\eqref{sc2}, the separation of inequalities  \eqref{sc1-lifted} and \eqref{sc2-lifted} is more time-consuming as it requires solving the lifting problems of the form \eqref{def:v-N0-N1} to compute the coefficients of the inequalities.
Observe that  by \cref{thm:nontrivial}, if the submodular inequality \eqref{sc1} (respectively, \eqref{sc2}) defined by $\CS$ is violated by $(w^\ast, x^\ast) $, the lifted submodular inequality \eqref{sc1-lifted} (respectively, \eqref{sc2-lifted}) defined by $\CS$ with any lifting ordering must also be violated by $(w^\ast, x^\ast)$.
Using this observation, our designed separation procedure for the lifted submodular inequalities \eqref{sc1-lifted} and \eqref{sc2-lifted} first applies the separation procedure for the submodular inequalities  \eqref{sc1}-\eqref{sc2} to filter inequalities violated by $(w^*, x^*)$, and then computes the corresponding lifted submodular inequalities.
In particular,
if a violated submodular inequality \eqref{sc1} defined by $\CS$ is found, we compute the corresponding lifted submodular inequality \eqref{sc1-lifted}, where the variables in $\CS$ are lifted according {to the nondecreasing ordering of $x_j^*$}. 
This specific lifting ordering enables to obtain an inequality with a large violation as the earlier a variable is lifted, the better its lifting coefficient will be.
The computation of inequalities \eqref{sc2-lifted}  is conducted in a similar manner where the variables in $[n]\backslash\CS$ are lifted according to the nonincreasing ordering of $x_j^*$, as to obtain an inequality with a large violation.
In order to further save the computational efforts, we apply \cref{prop:SC-simplification} to obtain submodular inequalities \eqref{sc1} and  \eqref{sc2} {with small $\CS$ and $[n]\backslash{\CS}$} (i.e., the $\CS^\gamma$-\SI and $\bar{\CS}^\gamma$-\SIbar) and as such, the number of lifted variables will become smaller.

Note that for the case $\gamma = 1$, the (lifted) submodular inequalities in \eqref{sc1-gamma1-unified} are enough to describe $\conv(\X)$, as demonstrated in \cref{thm:convex-hull-gamma1}. 
In this case, an exact polynomial time separation algorithm exists as we only need to examine whether the $n$ inequalities in  \eqref{sc1-gamma1-unified}  are violated or not. 
Using the result in Lemma B.1 in Appendix \ref{sect:appendix-convex-hull-gamma1},
the separation complexity can be further improved to  $\mathcal{O}(n)$.

\section{Computational Study}
\label{sect:Compu}

In this section, we present the computational results to demonstrate the effectiveness of the proposed \BnC algorithms for solving \eqref{prob:original-max-S}.
To accomplish this, we first perform numerical experiments to compare the performance of the proposed \BnC algorithms based on formulations \eqref{prob:SC} and \eqref{prob:SC-lifted}, where the submodular and lifted submodular inequalities are adopted, respectively.
Then, we present the computational results to demonstrate  the computational efficiency of the proposed \BnC algorithm based on formulation \eqref{prob:SC-lifted} 
over the state-of-the-art \GBD approach in \cite{Lin2021}.

The proposed \BnC algorithms were implemented in Julia 1.9.2 using \Gurobi 11.0.1 as the solver.
\Gurobi was set to run the code in a single-threaded mode, with a time limit of 7200 seconds and a relative \MIP gap tolerance of 0\%.
Parameter  \texttt{LazyConstraints} was set to 1, which enables to add cuts during the \BnC procedure.
Following \cite{Lin2021}, we set parameters \texttt{IntFeasTol}, \texttt{FeasibilityTol}, and \texttt{OptimalityTol} to $10^{-9}$ (as to avoid numerical issues), and  \texttt{Cuts} to 3 (as to trigger the generation of more internal cuts, thereby facilitating faster convergence of the \BnC algorithm).
Except for the ones mentioned above, other parameters of \Gurobi were set to their default values.
All computation was performed on a cluster of Intel(R) Xeon(R) Gold 6230R CPU @ 2.10 GHz computers.

In our experiments, we construct \CFLPLR instances using the procedure in \citet{Lin2021} \footnote{We use the code provided by  \citet{Lin2021} with  the random seed being $1$. The code is available at \url{https://www.researchgate.net/publication/348298635_GenerateRandomDatasetpy}.}, where the gravity rule \citep{Huff1964} is adopted to formulate the utilities.
The coordinates of customers and facilities are constructed from a 2-dimensional uniform distribution within the range $[0, 1000]^2$.
The buying power $b_i$ of customer $i$ is uniformly chosen from $\{10, 11, \dots, 1000\}$ and 
the fixed cost $f_j$ of facility $j$ is set to $2000$ for all $j \in[n]$.
The utilities of facilities to the customers are set to  {$u_{ij} = 1 / d_{ij}^2$} for all $i \in [m]$ and $\ j \in [n]$, where $d_{ij}$ is the Euclidean distance of customer $i$ and facility $j$, and the utilities of the outside options are set to $u_{i0}=1 / 50^2$ for all $i \in [m]$.
We construct two testsets of \CFLPLR instances based on different sizes of the problem and the consideration set $\gamma_i$, detailed as follows.

Testset \DATAONE consists of $32$ \CFLPLR instances where the number of customers $m$ and the number of facilities $n$ are taken from $\{800, 1000\}$ and $\{100, 200, 300, 400\}$, respectively. 
For the homogeneous case where $\gamma_i$, $i \in [m]$, are set to be the same $\gamma$, $\gamma$ is chosen from $\{1,2,3\}$; for the non-homogeneous (\nh) case, $\gamma_i$, $i \in [m]$, are uniformly chosen form $\{1, 2, 3, 4, 5\}$.

Testset \DATATWO consists of $30$ large-scale \CFLPLR instances where the customer behavior follows the partially binary rule, i.e., $\gamma_i =1$ for all $i \in [m]$.
In this testset, the number of customers $m$ and the number of facilities $n$ are taken from $\{1500, 2000, 3000, 5000, 10000\}$ and $\{100, 200, 300, 500, 1000, 2000\}$, respectively.

\subsection{Comparison of the \BnC algorithms based on formulations \eqref{prob:SC} and \eqref{prob:SC-lifted}}
\label{subsect:compu-submod-lifted}

\begin{table}[!htbp]
	\caption{Performance comparison of \testSI and \testLSI on the instances in testset \DATAONE.}
	\label{table:submodular-lifted}
	\footnotesize
	\scriptsize
	\centering
	\begin{tabular}{llrrrrrrrrrrrrrrrr}
	\toprule            
	\multicolumn{1}{l}{\multirow{2}{*}{\tblm}}                                                          
	&\multicolumn{1}{l}{\multirow{2}{*}{\tbln}}                                                         
	&\multicolumn{1}{l}{\multirow{2}{*}{\tblgamma}}                                                     
	& \multicolumn{7}{c}{\testSI}& \multicolumn{7}{c}{\testLSI} & \multicolumn{1}{c}{\multirow{2}{*}{\tblgapclosed}}\\
	\cmidrule(r){4-10}\cmidrule(r){11-17}
	&&&\multicolumn{1}{c}{\tblTg}&\multicolumn{1}{c}{\tblTC}&\multicolumn{1}{c}{\tblN}&\multicolumn{1}{c}{\tblZ}&\multicolumn{1}{c}{\tblZUB}&\multicolumn{1}{c}{\tblZLB}&\multicolumn{1}{c}{\tblTlp}
	&\multicolumn{1}{c}{\tblTg}&\multicolumn{1}{c}{\tblTC}&\multicolumn{1}{c}{\tblN}&\multicolumn{1}{c}{\tblZ}&\multicolumn{1}{c}{\tblZUB}&\multicolumn{1}{c}{\tblZLB}&\multicolumn{1}{c}{\tblTlp}
	&\\ \midrule
	
	800&100&1            &     9.6 &       0.1 &         1 &      105660 &      105663 &      105660 &       8.8 &     9.7 &       0.1 &         1 &      105660 &      105663 &      105660 &       8.9 &        --\\
	&&2                  &    17.7 &       3.8 &       753 &      130864 &      131713 &      128577 &      13.2 &    16.3 &       4.5 &         9 &      130864 &      131031 &      130669 &      13.9 &      80.3\\
	&&3                  &   157.2 &       8.1 &      5574 &      142928 &      145419 &      139246 &      18.5 &    23.9 &       9.4 &       284 &      142928 &      143397 &      142160 &      19.7 &      81.2\\
	&&\nh                &    34.6 &       5.0 &      1320 &      123790 &      124857 &      121643 &      17.0 &    23.0 &       8.4 &        30 &      123790 &      123911 &      123630 &      20.1 &      88.7\\
	&200&1            &    10.2 &       0.1 &         1 &      116529 &      116529 &      116529 &       9.3 &    10.1 &       0.1 &         1 &      116529 &      116529 &      116529 &       9.2 &        --\\
	&&2                  &    55.4 &      10.3 &      3079 &      141522 &      143311 &      138408 &      21.3 &    32.0 &      16.4 &         1 &      141522 &      141718 &      141368 &      27.0 &      89.1\\
	&&3                   &  (0.8) &      15.7 &     85362 &      152758 &      158253 &      145000 &      28.1 &   145.8 &      35.3 &      3336 &      152758 &      153973 &      149621 &      46.4 &      77.9\\
	&&\nh                 &  (0.1) &      13.1 &    240712 &      137447 &      140318 &      129131 &      27.2 &    74.6 &      33.4 &      1786 &      137447 &      138313 &      135989 &      46.1 &      69.8\\
	&300&1            &    10.5 &       0.1 &         1 &      124746 &      124746 &      124746 &       9.6 &    10.5 &       0.2 &         1 &      124746 &      124746 &      124746 &       9.7 &        --\\
	&&2                  &  3579.9 &      15.8 &    138592 &      148842 &      151945 &      143486 &      27.5 &    66.4 &      35.6 &      1238 &      148842 &      149461 &      143142 &      47.2 &      80.0\\
	&&3                   &  (1.8) &      28.7 &     36869 &      159364 &      166569 &      152540 &      42.7  &  (0.1) &      67.9 &    139200 &      159467 &      161808 &      154712 &      78.7 &      67.0\\
	&&\nh                 &  (0.2) &      23.7 &    199308 &      142398 &      145633 &      134305 &      39.4 &   129.1 &      90.1 &      1521 &      142398 &      143010 &      139547 &     102.6 &      81.1\\
	&400&1            &    11.0 &       0.2 &         1 &      130807 &      130807 &      130807 &      10.0 &    11.0 &       0.2 &         1 &      130807 &      130807 &      130807 &      10.1 &        --\\
	&&2                  &  3417.3 &      24.1 &    140442 &      153981 &      157109 &      143903 &      37.5 &    83.3 &      52.6 &       885 &      153981 &      154477 &      147847 &      64.8 &      84.2\\
	&&3                   &  (1.9) &      39.2 &     52146 &      164283 &      171349 &      151873 &      56.1  &  (0.1) &      97.7 &    123521 &      164508 &      166651 &      157308 &     107.5 &      68.7\\
	&&\nh                 &  (0.5) &      31.3 &    126873 &      145673 &      148992 &      138653 &      47.2 &   250.5 &     143.7 &      2794 &      145673 &      146495 &      142581 &     151.1 &      75.2\\
	1000&100&1           &     9.9 &       0.1 &         1 &      150422 &      150422 &      150422 &       9.0 &     9.8 &       0.1 &         1 &      150422 &      150422 &      150422 &       8.9 &        --\\
	&&2                  &    17.2 &       3.7 &       149 &      182178 &      182860 &      181136 &      13.6 &    16.4 &       4.8 &         1 &      182178 &      182264 &      181951 &      14.5 &      87.4\\
	&&3                  &    61.7 &       7.1 &      1342 &      196973 &      198928 &      194274 &      19.0 &    23.6 &       9.1 &        66 &      196973 &      197278 &      196344 &      20.3 &      84.4\\
	&&\nh                &    21.4 &       5.2 &       129 &      177850 &      178424 &      177818 &      17.1 &    19.1 &       6.7 &         1 &      177850 &      177850 &      177850 &      18.1 &     100.0\\
	&200&1           &    10.6 &       0.1 &         1 &      161637 &      161637 &      161637 &       9.6 &    10.6 &       0.1 &         1 &      161637 &      161637 &      161637 &       9.7 &        --\\
	&&2                  &   862.8 &      13.0 &     28860 &      193403 &      196718 &      184635 &      25.5 &    55.5 &      25.4 &       866 &      193403 &      194148 &      190494 &      37.0 &      77.5\\
	&&3                   &  (1.2) &      29.4 &     53637 &      207553 &      214829 &      194263 &      45.1 &  3666.6 &      39.4 &    104320 &      207553 &      209895 &      201833 &      52.5 &      67.8\\
	&&\nh                &   319.5 &      14.6 &      8943 &      193323 &      195802 &      188751 &      28.9 &    54.5 &      34.1 &       420 &      193323 &      193694 &      193105 &      48.2 &      85.1\\
	&300&1           &    11.4 &       0.2 &         1 &      173388 &      173388 &      173388 &      10.5 &    11.3 &       0.2 &         1 &      173388 &      173388 &      173388 &      10.4 &        --\\
	&&2                   &  (0.1) &      21.7 &    163025 &      202241 &      206005 &      193050 &      36.3 &    86.4 &      49.2 &      1213 &      202241 &      202978 &      198555 &      62.9 &      80.4\\
	&&3                   &  (1.8) &      47.7 &     28917 &      215695 &      223917 &      202934 &      69.4  &  (0.2) &      84.4 &     77637 &      215819 &      218704 &      205564 &      98.4 &      64.4\\
	&&\nh                 &  (0.6) &      27.6 &    116343 &      203408 &      208226 &      193975 &      45.9 &   740.0 &      84.0 &     30736 &      203408 &      204791 &      198810 &      94.9 &      71.3\\
	&400&1           &    13.7 &       0.2 &         1 &      171918 &      172007 &      171580 &      11.1 &    13.7 &       0.2 &         1 &      171918 &      172007 &      171580 &      11.0 &        --\\
	&&2                   &  (0.4) &      32.2 &     94607 &      202372 &      206967 &      194096 &      48.6 &   405.9 &      74.0 &     13571 &      202372 &      203815 &      194235 &      88.2 &      68.6\\
	&&3                   &  (2.8) &      75.2 &     16143 &      214652 &      225396 &      207888 &     101.8  &  (0.6) &     150.5 &     38449 &      215961 &      219917 &      207724 &     164.3 &      58.1\\
	&&\nh                 &  (0.5) &      25.3 &    146907 &      208478 &      212768 &      196018 &      43.3 &   410.0 &     160.4 &      8336 &      208478 &      209748 &      204764 &     170.4 &      70.4\\
	\tblSolved &&& 19& & & & & & & 28& & & & & & \\
	\ave&&               &   454.3 &             &     17325 &             &             &             &             &    26.4 &             &       200 &             &             &             &            \\ 
	\bottomrule         
\end{tabular}       
\end{table}

We first compare the performance of the proposed \BnC algorithms based on formulations \eqref{prob:SC} and \eqref{prob:SC-lifted}, where the submodular and lifted submodular inequalities are adopted, respectively.
For brevity, we refer to these two settings as \testSI and \testLSI, respectively.
\cref{table:submodular-lifted} summarizes the computational results on instances in testset \DATAONE.
For each setting, we report
the total CPU time in seconds (\tblT),
the CPU time in seconds spent in separating the cuts (\tblTC),
the number of explored nodes (\tblN),
the optimal value or the best incumbent of the instance (\tblZ), 
the upper bound (\tblZUB) and lower bound (\tblZLB) of the optimal value returned at the end of the first stage,  
and the CPU time in seconds spent in the first stage (\tblTlp).
For instances that cannot be solved to optimality within the given timelimit, we report under column \tblTg the percentage optimality gap (\tblG) computed as $\frac{\UB - \LB}{\UB} \times 100\%$, where $\UB$ and $\LB$ denote the upper bound and the lower bound obtained at the end of the time limit.
To compare the strength of the submodular and lifted submodular inequalities,  we also report the percentage gap improvement (\tblgapclosed), defined by
$\frac{\UB^1_{\texttt{LSI}}- \UB^1_{\texttt{SI}}}{\tblZ - \UB^1_{\texttt{SI}}} \times 100\%$, where
$\UB^1_{\texttt{LSI}}$ and $\UB^1_{\texttt{SI}}$ are the upper bounds at the end of the first stage returned by settings \testLSI and \testSI, respectively. 
The larger the \tblgapclosed, the more effective the lifted submodular inequalities over submodular  inequalities.
We use ``--'' to indicate that \testLSI and \testSI can return the same upper bounds, i.e., $\UB^1_{\texttt{LSI}}= \UB^1_{\texttt{SI}}$.
At the end of the table, we also report the total number of solved instances under each setting, and the average CPU time and number of explored nodes for the instances that can be solved to optimality by the two settings.

We first observe from  \cref{table:submodular-lifted} that for instances with $\gamma_i = 1$ for all $i \in [m]$ in testset \DATAONE, \testSI and \testLSI return the same upper bounds. 
This is reasonable as shown in \cref{thm:convex-hull-gamma1}, the submodular inequalities in \eqref{sc1-gamma1-unified} are able to describe the convex hull of $\X$, and thus the lifted submodular inequalities cannot provide additional improvement. 
For instances that $\gamma_i$ may take other values, the lifted submodular inequalities are much more effective than the submodular inequalities in terms of providing a stronger upper bound in the first stage.
Indeed, with the lifted submodular inequalities, \testLSI achieves a gap improvement \tblgapclosed ranging from $58.1\%$ to $100\%$ for these instances.
This is consistent with the result in \cref{cor:nontrivial-X}, where the lifted submodular inequalities are proven to be stronger than submodular inequalities.
In addition, the computational overhead spent in computing the lifted submodular inequalities is not large; see columns \tblTC under settings  \testSI and \testLSI.
Due to the above two advantages, \testLSI significantly outperforms \testSI, especially for instances with a large number of facilities $n$ or large sizes of the consideration sets $\gamma_i$.
In particular, equipped with the lifted submodular inequalities, \testLSI solves $28$ out of $32$ instances to optimality within the time limit,
while \testSI is only able to solve $19$ of them;
the average CPU time and number of explored nodes decrease from  $454.3$ seconds and $17325$ to $26.4$ seconds and $200$, respectively.

Due to the superior performance of \testLSI over \testSI,
in the following of this section, we only consider the proposed \BnC algorithm based on the formulation \eqref{prob:SC-lifted}.

\begin{table}[t]
	\caption{Performance comparison of \testGBD and \testLSI on the instances in testset \DATAONE.}
	\vspace*{-0.2cm}
	\label{table:GBD-lifted}
	\footnotesize
	\scriptsize
	\centering
	\begin{tabular}{llrrrrrrrrrrrrrrrr}
	\toprule            
	\multicolumn{1}{l}{\multirow{2}{*}{\tblm}}                                                          
	&\multicolumn{1}{l}{\multirow{2}{*}{\tbln}}                                                         
	&\multicolumn{1}{l}{\multirow{2}{*}{\tblgamma}}                                                     
	& \multicolumn{7}{c}{\testGBD}& \multicolumn{7}{c}{\testLSI} & \multicolumn{1}{c}{\multirow{2}{*}{\tblgapclosed}}\\
	\cmidrule(r){4-10}\cmidrule(r){11-17}
	&&&\multicolumn{1}{c}{\tblTg}&\multicolumn{1}{c}{\tblTC}&\multicolumn{1}{c}{\tblN}&\multicolumn{1}{c}{\tblZ}&\multicolumn{1}{c}{\tblZUB}&\multicolumn{1}{c}{\tblZLB}&\multicolumn{1}{c}{\tblTlp}
	&\multicolumn{1}{c}{\tblTg}&\multicolumn{1}{c}{\tblTC}&\multicolumn{1}{c}{\tblN}&\multicolumn{1}{c}{\tblZ}&\multicolumn{1}{c}{\tblZUB}&\multicolumn{1}{c}{\tblZLB}&\multicolumn{1}{c}{\tblTlp}
	&\\ \midrule
	
	800&100&1            &    19.5 &       1.1 &       706 &      105660 &      133574 &       77182 &      10.3 &     9.7 &       0.1 &         1 &      105660 &      105663 &      105660 &       8.9 &     100.0\\
	&&2                  &    33.1 &       1.7 &      1025 &      130864 &      152702 &      111107 &      11.1 &    16.3 &       4.5 &         9 &      130864 &      131031 &      130669 &      13.9 &      99.2\\
	&&3                  &   102.4 &       1.9 &      3977 &      142928 &      160116 &      129781 &      11.8 &    23.9 &       9.4 &       284 &      142928 &      143397 &      142160 &      19.7 &      97.3\\
	&&\nh                &    43.8 &       2.2 &       951 &      123790 &      141630 &       99566 &      14.0 &    23.0 &       8.4 &        30 &      123790 &      123911 &      123630 &      20.1 &      99.3\\
	&200&1            &    25.9 &       2.2 &       732 &      116529 &      158126 &       51725 &      11.9 &    10.1 &       0.1 &         1 &      116529 &      116529 &      116529 &       9.2 &     100.0\\
	&&2                  &   403.2 &       3.6 &     21796 &      141522 &      171353 &       99004 &      14.0 &    32.0 &      16.4 &         1 &      141522 &      141718 &      141368 &      27.0 &      99.3\\
	&&3                   &  (0.1) &       4.5 &    164624 &      152758 &      177931 &       98075 &      15.6 &   145.8 &      35.3 &      3336 &      152758 &      153973 &      149621 &      46.4 &      95.2\\
	&&\nh                &  1090.5 &       6.0 &     38630 &      137447 &      173490 &       68967 &      19.0 &    74.6 &      33.4 &      1786 &      137447 &      138313 &      135989 &      46.1 &      97.6\\
	&300&1            &    41.0 &       3.0 &      1149 &      124746 &      181155 &       14811 &      13.5 &    10.5 &       0.2 &         1 &      124746 &      124746 &      124746 &       9.7 &     100.0\\
	&&2                   &  (0.2) &       5.3 &    139934 &      148842 &      191106 &       61406 &      16.2 &    66.4 &      35.6 &      1238 &      148842 &      149461 &      143142 &      47.2 &      98.5\\
	&&3                   &  (0.7) &       7.2 &     26254 &      159438 &      196088 &       97956 &      18.9  &  (0.1) &      67.9 &    139200 &      159467 &      161808 &      154712 &      78.7 &      93.6\\
	&&\nh                 &  (0.1) &       7.5 &     50305 &      142398 &      184167 &       51602 &      20.5 &   129.1 &      90.1 &      1521 &      142398 &      143010 &      139547 &     102.6 &      98.5\\
	&400&1            &   136.5 &       3.8 &      5185 &      130807 &      199185 &       12634 &      14.7 &    11.0 &       0.2 &         1 &      130807 &      130807 &      130807 &      10.1 &     100.0\\
	&&2                   &  (0.3) &       8.6 &    104058 &      153981 &      205432 &       25743 &      20.0 &    83.3 &      52.6 &       885 &      153981 &      154477 &      147847 &      64.8 &      99.0\\
	&&3                   &  (0.9) &      11.9 &     22714 &      164508 &      208812 &       50616 &      24.6  &  (0.1) &      97.7 &    123521 &      164508 &      166651 &      157308 &     107.5 &      95.2\\
	&&\nh                 &  (0.2) &      11.5 &     86443 &      145673 &      196360 &       44008 &      25.1 &   250.5 &     143.7 &      2794 &      145673 &      146495 &      142581 &     151.1 &      98.4\\
	1000&100&1           &    14.9 &       1.4 &       118 &      150422 &      179702 &       94866 &      10.9 &     9.8 &       0.1 &         1 &      150422 &      150422 &      150422 &       8.9 &     100.0\\
	&&2                  &    35.8 &       1.8 &       877 &      182178 &      202753 &      141646 &      12.2 &    16.4 &       4.8 &         1 &      182178 &      182264 &      181951 &      14.5 &      99.6\\
	&&3                  &    57.9 &       2.7 &      1404 &      196973 &      213102 &      175664 &      13.9 &    23.6 &       9.1 &        66 &      196973 &      197278 &      196344 &      20.3 &      98.1\\
	&&\nh                &    36.0 &       2.4 &       610 &      177850 &      198824 &      153976 &      14.3 &    19.1 &       6.7 &         1 &      177850 &      177850 &      177850 &      18.1 &     100.0\\
	&200&1           &    31.7 &       2.9 &       718 &      161637 &      213344 &       40451 &      13.4 &    10.6 &       0.1 &         1 &      161637 &      161637 &      161637 &       9.7 &     100.0\\
	&&2                  &  2349.9 &       4.1 &     91985 &      193403 &      229742 &      110026 &      15.2 &    55.5 &      25.4 &       866 &      193403 &      194148 &      190494 &      37.0 &      98.0\\
	&&3                   &  (0.2) &       7.0 &    134337 &      207540 &      238117 &      142274 &      19.9 &  3666.6 &      39.4 &    104320 &      207553 &      209895 &      201833 &      52.5 &      92.3\\
	&&\nh                &   326.8 &       6.8 &      7770 &      193323 &      226953 &      148554 &      19.7 &    54.5 &      34.1 &       420 &      193323 &      193694 &      193105 &      48.2 &      98.9\\
	&300&1           &    39.8 &       3.9 &       678 &      173388 &      234877 &       29041 &      15.3 &    11.3 &       0.2 &         1 &      173388 &      173388 &      173388 &      10.4 &     100.0\\
	&&2                   &  (0.4) &       7.4 &     35196 &      202241 &      247945 &      116154 &      19.5 &    86.4 &      49.2 &      1213 &      202241 &      202978 &      198555 &      62.9 &      98.4\\
	&&3                   &  (0.8) &      10.2 &     23382 &      215819 &      254757 &      123216 &      25.0  &  (0.2) &      84.4 &     77637 &      215819 &      218704 &      205564 &      98.4 &      92.6\\
	&&\nh                 &  (0.3) &       6.8 &    114269 &      203408 &      249856 &      103270 &      20.3 &   740.0 &      84.0 &     30736 &      203408 &      204791 &      198810 &      94.9 &      97.0\\
	&400&1           &  6127.1 &       4.4 &    278190 &      171918 &      249473 &           0 &      16.4 &    13.7 &       0.2 &         1 &      171918 &      172007 &      171580 &      11.0 &      99.9\\
	&&2                   &  (0.7) &       8.9 &     40624 &      202372 &      259557 &       52808 &      22.1 &   405.9 &      74.0 &     13571 &      202372 &      203815 &      194235 &      88.2 &      97.5\\
	&&3                   &  (1.0) &      16.1 &     15343 &      215684 &      264704 &       78972 &      32.8  &  (0.6) &     150.5 &     38449 &      215961 &      219917 &      207724 &     164.3 &      91.9\\
	&&\nh                 &  (0.4) &      10.6 &     49219 &      208478 &      260710 &       58334 &      25.6 &   410.0 &     160.4 &      8336 &      208478 &      209748 &      204764 &     170.4 &      97.6\\
	\tblSolved &&& 18& & & & & & & 28& & & & & & \\
	\ave&&               &   606.4 &             &     25361 &             &             &             &             &    23.6 &             &       192 &             &             &             &            \\ 
	\bottomrule         
\end{tabular}       
	\vspace*{-0.2cm}
\end{table}

\begin{table}[!htbp]
	\caption{Performance comparison of \testGBD and \testLSI on the instances in testset \DATATWO.}
	\label{table:GBD-lifted-gamma1}
	\footnotesize
	\scriptsize
	\centering
	\begin{tabular}{llrrrrrrrrrrrrrrrr}
	\toprule            
	\multicolumn{1}{l}{\multirow{2}{*}{\tblm}}                                                          
	&\multicolumn{1}{l}{\multirow{2}{*}{\tbln}}                                                         
	&\multicolumn{1}{l}{\multirow{2}{*}{\tblgamma}}                                                     
	& \multicolumn{7}{c}{\testGBD}& \multicolumn{7}{c}{\testLSI} & \multicolumn{1}{c}{\multirow{2}{*}{\tblgapclosed}}\\
	\cmidrule(r){4-10}\cmidrule(r){11-17}
	&&&\multicolumn{1}{c}{\tblTg}&\multicolumn{1}{c}{\tblTC}&\multicolumn{1}{c}{\tblN}&\multicolumn{1}{c}{\tblZ}&\multicolumn{1}{c}{\tblZUB}&\multicolumn{1}{c}{\tblZLB}&\multicolumn{1}{c}{\tblTlp}
	&\multicolumn{1}{c}{\tblTg}&\multicolumn{1}{c}{\tblTC}&\multicolumn{1}{c}{\tblN}&\multicolumn{1}{c}{\tblZ}&\multicolumn{1}{c}{\tblZUB}&\multicolumn{1}{c}{\tblZLB}&\multicolumn{1}{c}{\tblTlp}
	&\\ \midrule
	
	1500&100&1           &    19.0 &       1.3 &       665 &      256723 &      288332 &      186464 &      11.1 &    10.2 &       0.2 &         1 &      256723 &      256723 &      256723 &       9.4 &     100.0\\
	&200&1           &    41.4 &       3.9 &      1026 &      290000 &      347989 &      147828 &      15.5 &    12.8 &       0.3 &         1 &      290000 &      290036 &      289877 &      11.0 &      99.9\\
	&300&1           &   131.7 &       6.4 &      3713 &      309503 &      378914 &      103080 &      20.1 &    13.2 &       0.3 &         1 &      309503 &      309513 &      309482 &      11.7 &     100.0\\
	&500&1            &  (0.1) &       9.6 &     59773 &      324397 &      428961 &       47682 &      28.8 &    17.0 &       0.6 &         1 &      324397 &      324397 &      324397 &      16.1 &     100.0\\
	&1000&1           &  (1.4) &      20.9 &     27759 &      331883 &      470641 &           0 &      51.3 &    23.9 &       0.9 &         1 &      333572 &      333572 &      333572 &      22.9 &     100.0\\
	&2000&1           &  (3.1) &      36.6 &     10909 &      331766 &      512399 &           0 &      77.6 &    42.3 &       2.0 &         7 &      337979 &      338075 &      337406 &      38.8 &      99.9\\
	2000&100&1           &    16.9 &       2.1 &        26 &      379652 &      406939 &      309425 &      13.4 &    11.6 &       0.3 &         1 &      379652 &      379652 &      379652 &      10.6 &     100.0\\
	&200&1           &    49.9 &       5.7 &       584 &      429073 &      495194 &      284227 &      21.2 &    13.0 &       0.2 &         1 &      429073 &      429073 &      429073 &      12.2 &     100.0\\
	&300&1           &   108.0 &       8.2 &      2362 &      451156 &      541602 &      255803 &      29.0 &    18.6 &       0.4 &         1 &      451156 &      451184 &      451156 &      16.9 &     100.0\\
	&500&1            &  (0.2) &      11.4 &     27219 &      470594 &      591178 &       59873 &      37.3 &    24.5 &       0.6 &         1 &      470594 &      470606 &      470579 &      22.7 &     100.0\\
	&1000&1           &  (2.1) &      26.5 &     19058 &      484333 &      641830 &           0 &      76.7 &    39.2 &       1.2 &         1 &      486795 &      486802 &      486795 &      38.1 &     100.0\\
	&2000&1           &  (17.1) &      46.2 &      3567 &      400678 &      701096 &           0 &     122.0 &    70.0 &       2.8 &         1 &      498922 &      498932 &      498343 &      67.6 &     100.0\\
	3000&100&1           &    20.0 &       2.8 &        22 &      620174 &      647358 &      583213 &      15.9 &    12.2 &       0.3 &         1 &      620174 &      620174 &      620174 &      11.2 &     100.0\\
	&200&1           &    55.4 &       7.0 &       589 &      718503 &      784453 &      555782 &      29.6 &    18.5 &       0.4 &         1 &      718503 &      718563 &      717947 &      16.6 &      99.9\\
	&300&1           &   106.0 &      11.8 &       702 &      763763 &      856222 &      549775 &      46.0 &    26.6 &       0.5 &         1 &      763763 &      763763 &      763763 &      25.6 &     100.0\\
	&500&1            &  (0.3) &      21.5 &     54091 &      796272 &      942141 &      293892 &      83.9 &    52.3 &       1.0 &         4 &      796272 &      796388 &      795251 &      46.8 &      99.9\\
	&1000&1           &  (1.4) &      37.7 &      5846 &      823187 &     1022094 &       61041 &     140.7 &   107.8 &       2.4 &         1 &      823192 &      823210 &      822322 &     104.9 &     100.0\\
	&2000&1           &  (12.6) &     102.1 &      2485 &      759660 &     1087212 &           0 &     291.5 &   185.2 &       4.4 &         3 &      841637 &      841720 &      836082 &     178.4 &     100.0\\
	5000&100&1           &    23.5 &       3.4 &         4 &     1151495 &     1172579 &     1132111 &      19.1 &    15.0 &       0.3 &         1 &     1151495 &     1151495 &     1151495 &      13.7 &     100.0\\
	&200&1           &    64.1 &       9.6 &       542 &     1354738 &     1421387 &     1250698 &      44.2 &    27.4 &       0.8 &         1 &     1354738 &     1354739 &     1354699 &      25.2 &     100.0\\
	&300&1           &   170.9 &      17.5 &       812 &     1457829 &     1570979 &     1201751 &      73.0 &    44.8 &       1.1 &         1 &     1457829 &     1457904 &     1457756 &      42.6 &      99.9\\
	&500&1            &  (0.2) &      31.0 &     69387 &     1514244 &     1674571 &      951090 &     127.5 &    97.1 &       1.6 &         1 &     1514244 &     1514351 &     1513888 &      94.2 &      99.9\\
	&1000&1           &  (1.8) &      77.0 &      9088 &     1564591 &     1810977 &      516730 &     501.8 &   464.4 &       3.2 &        12 &     1565000 &     1565180 &     1563879 &     455.3 &      99.9\\
	&2000&1           &  (21.2) &     173.7 &      2092 &     1542109 &     1903255 &       62060 &     738.5 &  2002.8 &       7.8 &         1 &     1590613 &     1590700 &     1589738 &    1991.3 &     100.0\\
	10000&100&1          &    40.3 &       7.0 &         2 &     2519882 &     2537414 &     2510408 &      35.7 &    23.2 &       1.1 &         1 &     2519882 &     2519882 &     2519882 &      21.2 &     100.0\\
	&200&1          &    90.8 &      13.2 &       343 &     2986742 &     3043403 &     2936880 &      75.5 &    48.0 &       1.3 &         1 &     2986742 &     2986742 &     2986742 &      46.0 &     100.0\\
	&300&1          &   247.1 &      24.8 &       569 &     3256111 &     3356070 &     3043955 &     180.0 &    95.1 &       2.1 &         1 &     3256111 &     3256111 &     3256111 &      93.3 &     100.0\\
	&500&1          &   878.2 &      60.8 &      2846 &     3429190 &     3611857 &     2928141 &     362.4 &   221.8 &       2.8 &         1 &     3429190 &     3429218 &     3429185 &     218.5 &     100.0\\
	&1000&1          &  (1.6) &     159.9 &      6484 &     3558234 &     3870442 &     1734264 &    1777.8 &  2370.2 &       7.0 &        10 &     3560001 &     3560364 &     3552374 &    2355.3 &      99.9\\
	&2000&1          &  (77.0) &     370.8 &       163 &      359693 &     4067646 &      359693 &    4094.9 &  6976.4 &      14.0 &       168 &     3633174 &     3633428 &     3617675 &    6953.9 &      99.9\\
	\tblSolved &&& 16& & & & & & & 30& & & & & & \\
	\ave&&               &   128.9 &             &       925 &             &             &             &             &    38.2 &             &         1 &             &             &             &            \\ 
	\bottomrule         
\end{tabular}       
\end{table}

\subsection{Comparison with the state-of-the-art \GBD approach in \cite{Lin2021}}
\label{subsect:lifted-GBD}

We now compare the performance of the proposed \BnC algorithm equipped with the lifted submodular inequalities with the state-of-the-art \GBD approach in \cite{Lin2021}, denoted by \testGBD.
For \testGBD, we also employ a two-stage implementation, as our preliminary experiments showed that it is more efficient than the single-stage implementation in \cite{Lin2021}.
The separation of Benders inequalities \eqref{ineq:GBD} is performed in a similar manner as in \cref{subsect:sepa}.
Detailed computational results on instances in testsets \DATAONE and \DATATWO under settings \testGBD and \testLSI are shown in \cref{table:GBD-lifted,table:GBD-lifted-gamma1}, respectively.
Similarly, to compare the effectiveness of the proposed lifted submodular inequalities and the Benders inequalities of \cite{Lin2021}, we report under column \tblgapclosed the percentage gap improvement, defined by
$\frac{\UB^1_\texttt{LSI}- \UB^1_\texttt{GBD}}{\tblZ - \UB^1_\texttt{GBD}} \times 100\%$, where
$\UB^1_\texttt{LSI}$ and $\UB^1_\texttt{GBD}$ are the upper bounds at the end of the first stage under settings \testLSI and \testGBD, respectively. 
In \cref{fig:GBD-vs-Lifted-profiles,fig:GBD-vs-Lifted-profiles-gamma1}, we further plot the performance profiles of the CPU time and number of explored nodes under settings \testGBD and \testLSI on instances in testsets \DATAONE and \DATATWO, respectively.

We first compare the performanace of settings \testGBD and \testLSI on instances in testset \DATAONE.
{From \cref{table:GBD-lifted}, we observe that while the separation of the lifted submodular inequalities takes more computational efforts compared to that of the Benders inequalities, the use of the lifted submodular inequalities  leads to a tighter formulation, resulting in a better upper bound.}
Indeed, with the lifted submodular inequalities, \testLSI consistently achieves a gap improvement \tblgapclosed of over $90\%$ for instances in testset \DATAONE. 
With a tighter formulation, the rounding heuristic in \testLSI exhibits better performance, which can be confirmed from \cref{table:GBD-lifted} that the lower bounds \tblZLB under \testLSI are much better than those under \GBD.
Due to the above advantages, \testLSI significantly outperforms  \testGBD.
Overall, \testLSI can solve $28$ out of all $32$ instances in testset \DATATWO, whereas \testGBD is only able to solve $18$ of them; 
for instances that can be solved by both settings, the average CPU time and average number of explored nodes under setting \testLSI are at least one magnitude smaller than those of \testGBD.
The computational efficiency of \testLSI is more intuitively depicted in \cref{fig:GBD-vs-Lifted,fig:GBD-vs-Lifted-node}, where we observe that approximately $80\%$ of the instances are solved by \testLSI within $1000$ seconds, whereas only half of the instances are solved by \testGBD within the same period of time;
more than half of the instances can be solved by \testLSI within $1000$ nodes, whereas approximately $25\%$ instances can be solved by \testGBD within the same number of nodes.

\begin{figure}[t]
	\centering
	\subcaptionbox{CPU time\label{fig:GBD-vs-Lifted}}{\includegraphics[scale=.40]{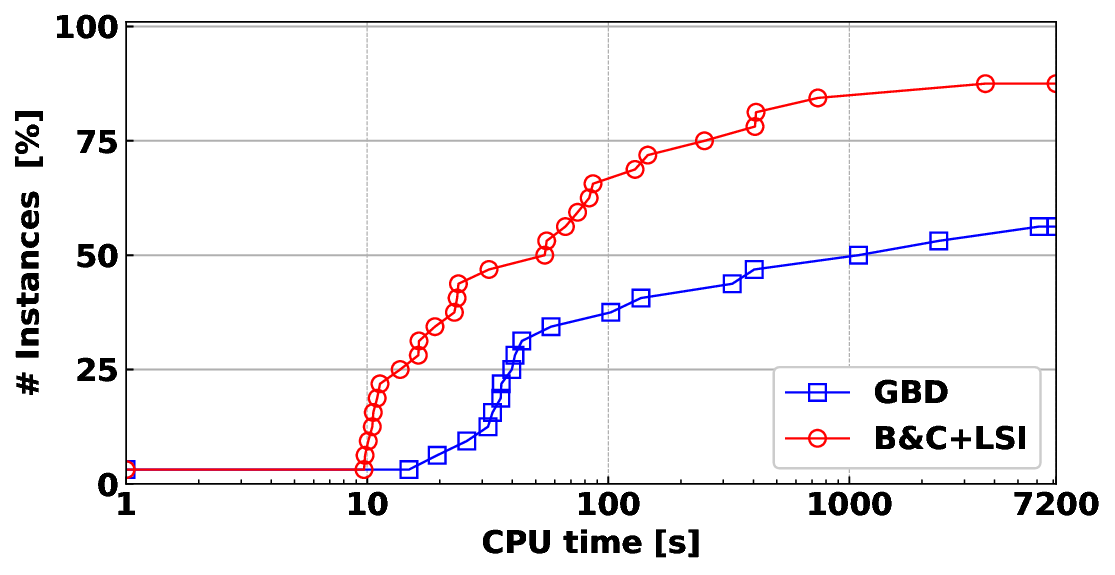}}
	\subcaptionbox{Number of explored nodes\label{fig:GBD-vs-Lifted-node}}{\includegraphics[scale=.40]{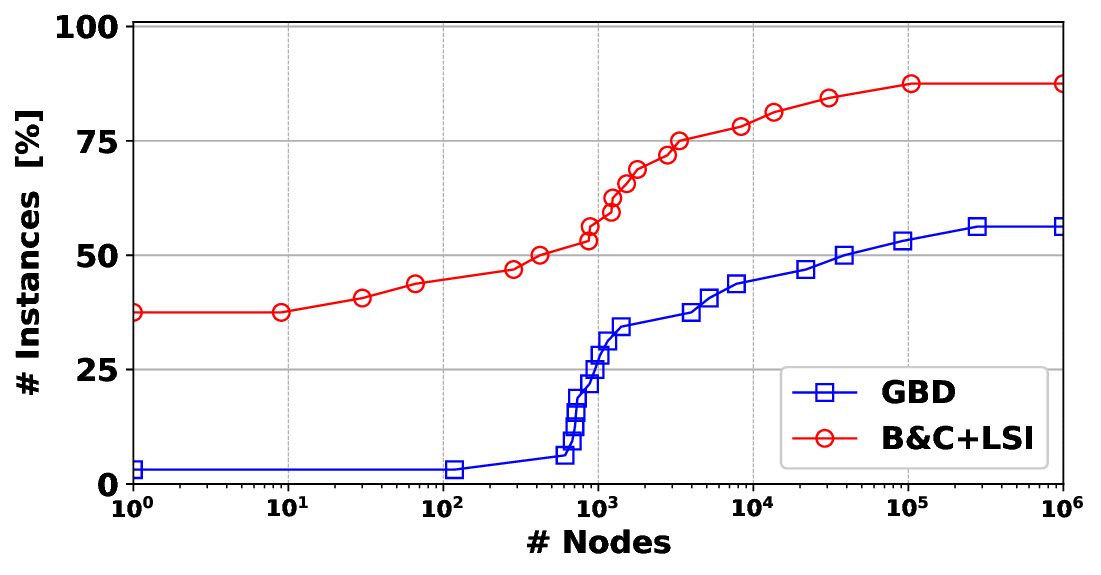}}
	\caption{Performance profiles of the CPU time and  number of explored nodes under settings \testGBD and \testLSI on instances in testset \DATAONE.}
	\label{fig:GBD-vs-Lifted-profiles}
\end{figure}

\begin{figure}[t]
	\centering
	\subcaptionbox{CPU time \label{fig:GBD-vs-Lifted-gamma1}}{\includegraphics[scale=.40]{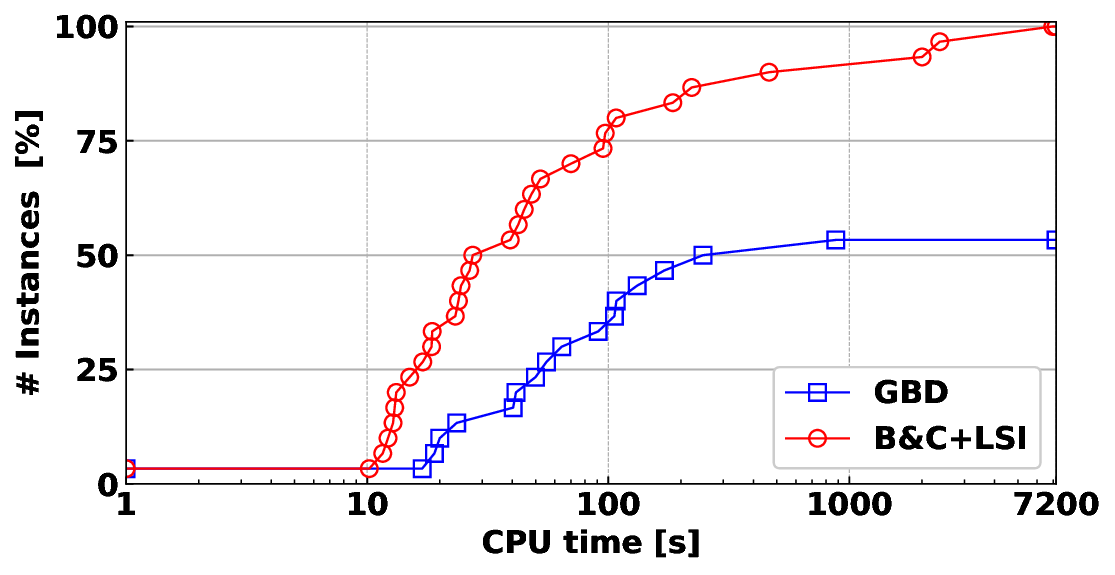}}
	\subcaptionbox{Number of explored nodes	\label{fig:GBD-vs-Lifted-gamma1-node}}{\includegraphics[scale=.40]{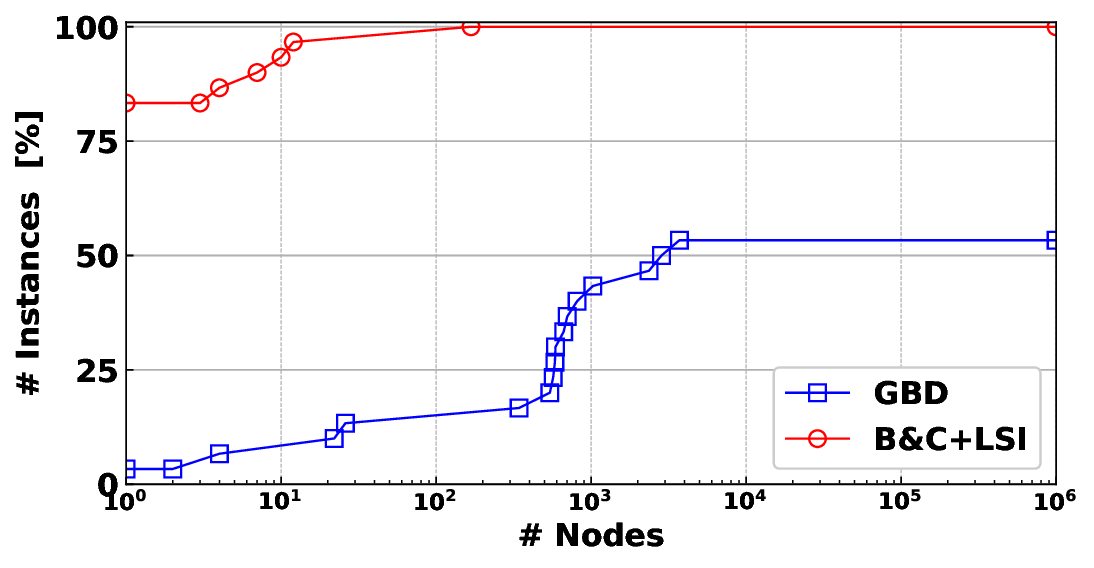}}
	\caption{Performance profiles of the CPU time and  number of explored nodes under settings \testGBD and \testLSI on instances in testset \DATATWO.}
	\label{fig:GBD-vs-Lifted-profiles-gamma1}
\end{figure}

We now compare the performance of settings \testGBD and \testLSI on the large-scale \CFLPLR instances with partially binary rule in testset \DATATWO. 
As discussed at the end of \cref{subsect:sepa}, for the case that $\gamma_i = 1$ for all $i \in [m]$, 
the separation of (lifted) submodular inequalities in  \eqref{sc1-gamma1-unified} admits an efficient $\mathcal{O}(n)$ exact algorithm. 
This is evident from \cref{table:GBD-lifted-gamma1} where the computational efforts required for the separation of the (lifted) submodular inequalities are relatively small. 
Moreover, as demonstrated in \cref{cor:nontrivial-X}, for the case where $\gamma_i =1$ for all $i \in [m]$, the (lifted) submodular inequalities are able to describe the convex hull of $\X$, and thus they enable to achieve a significantly better \LP relaxation; see column \tblgapclosed of \cref{table:GBD-lifted-gamma1}.
Therefore, \testLSI achieves a tremendously better performance than \testGBD.   
In particular, as shown in \cref{table:GBD-lifted-gamma1}, \testLSI can solve all instances to optimality,  whereas \testGBD fails to solve the instances with the number of facilities $n$ being $1000$ or $2000$;
as depicted in \cref{fig:GBD-vs-Lifted-gamma1,fig:GBD-vs-Lifted-gamma1-node}, the red circle lines {corresponding to} \testLSI are much ``higher'' than the blue square lines {corresponding to} \testGBD.

\section{Conclusions}\label{sect:Conclusion}

In this paper, we have investigated the polyhedral structure of the \CFLPLR and proposed an efficient \BnC approach for solving large-scale problems. 
Our approach first establishes the submodularity of the probability function characterizing a customer's probability to patronize new open facilities, and then uses the well-known submodular inequalities to describe the mixed 0-1 set defined by the hypograph of the probability function and the integrality constraints.
Moreover, we strengthen the submodular inequalities by sequential lifting, obtaining an \MILP formulation for the \CFLPLR described by the lifted submodular inequalities.  
Two key features of the proposed lifted submodular inequalities, which make them particularly well-suited to be embedded into a \BnC algorithm, are as follows.
First, their derivation explicitly takes the integrality constraints of the variables into consideration and they are guaranteed to define (strong) facet-defining inequalities of the convex hull of the considered mixed 0-1 set.
Second, their computation admits an efficient pseudo-polynomial time algorithm. 
Numerical results showed that compared with the Benders inequalities in the state-of-the-art \GBD algorithm whose derivation does not take the integrality constraints into consideration, the proposed lifted submodular inequalities are much more effective in terms of providing a tight \LP relaxation bound. 
With this advantage, it is shown that the proposed \BnC algorithm outperforms the state-of-the-art approach by at least one order of magnitude.

\bibliography{shorttitles,cflp}

\begin{thebibliography}{46}
\expandafter\ifx\csname natexlab\endcsname\relax\def\natexlab#1{#1}\fi
\providecommand{\url}[1]{\texttt{#1}}
\providecommand{\href}[2]{#2}
\providecommand{\path}[1]{#1}
\providecommand{\DOIprefix}{doi:}
\providecommand{\ArXivprefix}{arXiv:}
\providecommand{\URLprefix}{URL: }
\providecommand{\Pubmedprefix}{pmid:}
\providecommand{\doi}[1]{\href{http://dx.doi.org/#1}{\path{#1}}}
\providecommand{\Pubmed}[1]{\href{pmid:#1}{\path{#1}}}
\providecommand{\bibinfo}[2]{#2}
\ifx\xfnm\relax \def\xfnm[#1]{\unskip,\space#1}\fi
\bibitem[{Aboolian et~al.(2007)Aboolian, Berman \& Krass}]{Aboolian2007}
\bibinfo{author}{Aboolian, R.}, \bibinfo{author}{Berman, O.}, \&
  \bibinfo{author}{Krass, D.} (\bibinfo{year}{2007}).
\newblock \bibinfo{title}{Competitive facility location and design problem}.
\newblock {\it \bibinfo{journal}{Eur. J. Oper. Res.}\/},  {\it
  \bibinfo{volume}{182}\/}, \bibinfo{pages}{40--62}.
\bibitem[{Ahmed \& Atamt{\"{u}}rk(2011)}]{Ahmed2011}
\bibinfo{author}{Ahmed, S.}, \& \bibinfo{author}{Atamt{\"{u}}rk, A.}
  (\bibinfo{year}{2011}).
\newblock \bibinfo{title}{{Maximizing a class of submodular utility
  functions}}.
\newblock {\it \bibinfo{journal}{Math. Program.}\/},  {\it
  \bibinfo{volume}{128}\/}, \bibinfo{pages}{149--169}.
\bibitem[{Altekin et~al.(2021)Altekin, Dasci \& Karatas}]{Altekin2021}
\bibinfo{author}{Altekin, F.~T.}, \bibinfo{author}{Dasci, A.}, \&
  \bibinfo{author}{Karatas, M.} (\bibinfo{year}{2021}).
\newblock \bibinfo{title}{Linear and conic reformulations for the maximum
  capture location problem under multinomial logit choice}.
\newblock {\it \bibinfo{journal}{Optim. Lett.}\/},  {\it
  \bibinfo{volume}{15}\/}, \bibinfo{pages}{2611--2637}.
\bibitem[{{Aros-Vera} et~al.(2013){Aros-Vera}, Marianov \&
  Mitchell}]{Aros-Vera2013}
\bibinfo{author}{{Aros-Vera}, F.}, \bibinfo{author}{Marianov, V.}, \&
  \bibinfo{author}{Mitchell, J.~E.} (\bibinfo{year}{2013}).
\newblock \bibinfo{title}{P-{H}ub approach for the optimal park-and-ride
  facility location problem}.
\newblock {\it \bibinfo{journal}{Eur. J. Oper. Res.}\/},  {\it
  \bibinfo{volume}{226}\/}, \bibinfo{pages}{277--285}.
\bibitem[{Benati \& Hansen(2002)}]{Benati2002}
\bibinfo{author}{Benati, S.}, \& \bibinfo{author}{Hansen, P.}
  (\bibinfo{year}{2002}).
\newblock \bibinfo{title}{{The maximum capture problem with random utilities:
  Problem formulation and algorithms}}.
\newblock {\it \bibinfo{journal}{Eur. J. Oper. Res.}\/},  {\it
  \bibinfo{volume}{143}\/}, \bibinfo{pages}{518--530}.
\bibitem[{Beresnev(2013)}]{Beresnev2013}
\bibinfo{author}{Beresnev, V.} (\bibinfo{year}{2013}).
\newblock \bibinfo{title}{Branch-and-bound algorithm for a competitive facility
  location problem}.
\newblock {\it \bibinfo{journal}{Comput. Oper. Res.}\/},  {\it
  \bibinfo{volume}{40}\/}, \bibinfo{pages}{2062--2070}.
\bibitem[{Berman et~al.(2009)Berman, Drezner, Drezner \& Krass}]{Berman2009a}
\bibinfo{author}{Berman, O.}, \bibinfo{author}{Drezner, T.},
  \bibinfo{author}{Drezner, Z.}, \& \bibinfo{author}{Krass, D.}
  (\bibinfo{year}{2009}).
\newblock \bibinfo{title}{Modeling competitive facility location problems: New
  approaches and results}.
\newblock In \bibinfo{editor}{M.~R. Oskoorouchi}, \bibinfo{editor}{P.~Gray}, \&
  \bibinfo{editor}{H.~J. Greenberg} (Eds.), {\it \bibinfo{booktitle}{Decision
  Technologies and Applications}\/} (pp. \bibinfo{pages}{156--181}).
\bibitem[{Berman \& Krass(1998)}]{Berman1998}
\bibinfo{author}{Berman, O.}, \& \bibinfo{author}{Krass, D.}
  (\bibinfo{year}{1998}).
\newblock \bibinfo{title}{Flow intercepting spatial interaction model: A new
  approach to optimal location of competitive facilities}.
\newblock {\it \bibinfo{journal}{Location Science}\/},  {\it
  \bibinfo{volume}{6}\/}, \bibinfo{pages}{41--65}.
\bibitem[{Biesinger et~al.(2016)Biesinger, Hu \& Raidl}]{Biesinger2016}
\bibinfo{author}{Biesinger, B.}, \bibinfo{author}{Hu, B.}, \&
  \bibinfo{author}{Raidl, G.} (\bibinfo{year}{2016}).
\newblock \bibinfo{title}{Models and algorithms for competitive facility
  location problems with different customer behavior}.
\newblock {\it \bibinfo{journal}{Ann. Math. Artif. Intell.}\/},  {\it
  \bibinfo{volume}{76}\/}, \bibinfo{pages}{93--119}.
\bibitem[{Bodur et~al.(2017)Bodur, Dash, G{\"u}nl{\"u}k \& Luedtke}]{Bodur2017}
\bibinfo{author}{Bodur, M.}, \bibinfo{author}{Dash, S.},
  \bibinfo{author}{G{\"u}nl{\"u}k, O.}, \& \bibinfo{author}{Luedtke, J.}
  (\bibinfo{year}{2017}).
\newblock \bibinfo{title}{Strengthened {B}enders cuts for stochastic integer
  programs with continuous recourse}.
\newblock {\it \bibinfo{journal}{INFORMS J. Comput.}\/},  {\it
  \bibinfo{volume}{29}\/}, \bibinfo{pages}{77--91}.
\bibitem[{Bodur \& Luedtke(2017)}]{Bodur2017a}
\bibinfo{author}{Bodur, M.}, \& \bibinfo{author}{Luedtke, J.~R.}
  (\bibinfo{year}{2017}).
\newblock \bibinfo{title}{Mixed-integer rounding enhanced {B}enders
  decomposition for multiclass service-system staffing and scheduling with
  arrival rate uncertainty}.
\newblock {\it \bibinfo{journal}{Manag. Sci.}\/},  {\it
  \bibinfo{volume}{63}\/}, \bibinfo{pages}{2073--2091}.
\bibitem[{Drezner et~al.(2018)Drezner, Drezner \& Zerom}]{Drezner2018}
\bibinfo{author}{Drezner, T.}, \bibinfo{author}{Drezner, Z.}, \&
  \bibinfo{author}{Zerom, D.} (\bibinfo{year}{2018}).
\newblock \bibinfo{title}{Competitive facility location with random
  attractiveness}.
\newblock {\it \bibinfo{journal}{Oper. Res. Lett.}\/},  {\it
  \bibinfo{volume}{46}\/}, \bibinfo{pages}{312--317}.
\bibitem[{Drezner \& Eiselt(2024)}]{Drezner2024}
\bibinfo{author}{Drezner, Z.}, \& \bibinfo{author}{Eiselt, H.}
  (\bibinfo{year}{2024}).
\newblock \bibinfo{title}{Competitive location models: A review}.
\newblock {\it \bibinfo{journal}{Eur. J. Oper. Res.}\/},  {\it
  \bibinfo{volume}{316}\/}, \bibinfo{pages}{5--18}.
\bibitem[{Eiselt \& Laporte(1997)}]{Eiselt1997}
\bibinfo{author}{Eiselt, H.}, \& \bibinfo{author}{Laporte, G.}
  (\bibinfo{year}{1997}).
\newblock \bibinfo{title}{Sequential location problems}.
\newblock {\it \bibinfo{journal}{Eur. J. Oper. Res.}\/},  {\it
  \bibinfo{volume}{96}\/}, \bibinfo{pages}{217--231}.
\bibitem[{Fern{\'a}ndez et~al.(2017)Fern{\'a}ndez, Pelegr{\'i}n, Lan{\v
  c}inskas \& {\v Z}ilinskas}]{Fernandez2017}
\bibinfo{author}{Fern{\'a}ndez, P.}, \bibinfo{author}{Pelegr{\'i}n, B.},
  \bibinfo{author}{Lan{\v c}inskas, A.}, \& \bibinfo{author}{{\v Z}ilinskas,
  J.} (\bibinfo{year}{2017}).
\newblock \bibinfo{title}{New heuristic algorithms for discrete competitive
  location problems with binary and partially binary customer behavior}.
\newblock {\it \bibinfo{journal}{Comput. Oper. Res.}\/},  {\it
  \bibinfo{volume}{79}\/}, \bibinfo{pages}{12--18}.
\bibitem[{Fern{\'a}ndez et~al.(2021)Fern{\'a}ndez, Pelegr{\'i}n, Lan{\v
  c}inskas \& {\v Z}ilinskas}]{Fernandez2021}
\bibinfo{author}{Fern{\'a}ndez, P.}, \bibinfo{author}{Pelegr{\'i}n, B.},
  \bibinfo{author}{Lan{\v c}inskas, A.}, \& \bibinfo{author}{{\v Z}ilinskas,
  J.} (\bibinfo{year}{2021}).
\newblock \bibinfo{title}{Exact and heuristic solutions of a discrete
  competitive location model with {P}areto-{H}uff customer choice rule}.
\newblock {\it \bibinfo{journal}{J. Comput. Appl. Math.}\/},  {\it
  \bibinfo{volume}{385}\/}, \bibinfo{pages}{113200}.
\bibitem[{Fischetti et~al.(2016)Fischetti, Ljubi{\'c} \& Sinnl}]{Fischetti2016}
\bibinfo{author}{Fischetti, M.}, \bibinfo{author}{Ljubi{\'c}, I.}, \&
  \bibinfo{author}{Sinnl, M.} (\bibinfo{year}{2016}).
\newblock \bibinfo{title}{{B}enders decomposition without separability: A
  computational study for capacitated facility location problems}.
\newblock {\it \bibinfo{journal}{Eur. J. Oper. Res.}\/},  {\it
  \bibinfo{volume}{253}\/}, \bibinfo{pages}{557--569}.
\bibitem[{Freire et~al.(2016)Freire, Moreno \& Yushimito}]{Freire2016}
\bibinfo{author}{Freire, A.~S.}, \bibinfo{author}{Moreno, E.}, \&
  \bibinfo{author}{Yushimito, W.~F.} (\bibinfo{year}{2016}).
\newblock \bibinfo{title}{A branch-and-bound algorithm for the maximum capture
  problem with random utilities}.
\newblock {\it \bibinfo{journal}{Eur. J. Oper. Res.}\/},  {\it
  \bibinfo{volume}{252}\/}, \bibinfo{pages}{204--212}.
\bibitem[{Garey \& Johnson(1978)}]{Garey1978}
\bibinfo{author}{Garey, M.~R.}, \& \bibinfo{author}{Johnson, D.~S.}
  (\bibinfo{year}{1978}).
\newblock \bibinfo{title}{``{S}trong'' {NP}-completeness results: Motivation,
  examples, and implications}.
\newblock {\it \bibinfo{journal}{J. ACM}\/},  {\it \bibinfo{volume}{25}\/},
  \bibinfo{pages}{499--508}.
\bibitem[{Gentile et~al.(2018)Gentile, Alves~Pessoa, Poss \&
  Costa~Roboredo}]{Gentile2018}
\bibinfo{author}{Gentile, J.}, \bibinfo{author}{Alves~Pessoa, A.},
  \bibinfo{author}{Poss, M.}, \& \bibinfo{author}{Costa~Roboredo, M.}
  (\bibinfo{year}{2018}).
\newblock \bibinfo{title}{Integer programming formulations for three sequential
  discrete competitive location problems with foresight}.
\newblock {\it \bibinfo{journal}{Eur. J. Oper. Res.}\/},  {\it
  \bibinfo{volume}{265}\/}, \bibinfo{pages}{872--881}.
\bibitem[{Geoffrion(1972)}]{Geoffrion1972}
\bibinfo{author}{Geoffrion, A.~M.} (\bibinfo{year}{1972}).
\newblock \bibinfo{title}{{Generalized {B}enders decomposition}}.
\newblock {\it \bibinfo{journal}{J. Optim. Theory Appl.}\/},  {\it
  \bibinfo{volume}{10}\/}, \bibinfo{pages}{237--260}.
\bibitem[{Haase(2009)}]{Haase2009}
\bibinfo{author}{Haase, K.} (\bibinfo{year}{2009}).
\newblock {\it \bibinfo{title}{Discrete Location Planning}\/}.
\newblock \bibinfo{type}{Technical Report} \bibinfo{number}{WP-09-07}
  {Institute of Transport and Logistics Studies} \bibinfo{address}{University
  of Syndney}.
\bibitem[{Haase \& M{\"{u}}ller(2014)}]{Haase2014}
\bibinfo{author}{Haase, K.}, \& \bibinfo{author}{M{\"{u}}ller, S.}
  (\bibinfo{year}{2014}).
\newblock \bibinfo{title}{{A comparison of linear reformulations for
  multinomial logit choice probabilities in facility location models}}.
\newblock {\it \bibinfo{journal}{Eur. J. Oper. Res.}\/},  {\it
  \bibinfo{volume}{232}\/}, \bibinfo{pages}{689--691}.
\bibitem[{Huff(1964)}]{Huff1964}
\bibinfo{author}{Huff, D.~L.} (\bibinfo{year}{1964}).
\newblock \bibinfo{title}{Defining and estimating a trading area}.
\newblock {\it \bibinfo{journal}{Journal of Marketing}\/},  {\it
  \bibinfo{volume}{28}\/}, \bibinfo{pages}{34--38}.
\bibitem[{Kress \& Pesch(2012)}]{Kress2012}
\bibinfo{author}{Kress, D.}, \& \bibinfo{author}{Pesch, E.}
  (\bibinfo{year}{2012}).
\newblock \bibinfo{title}{Sequential competitive location on networks}.
\newblock {\it \bibinfo{journal}{Eur. J. Oper. Res.}\/},  {\it
  \bibinfo{volume}{217}\/}, \bibinfo{pages}{483--499}.
\bibitem[{Krohn et~al.(2021)Krohn, M{\"u}ller \& Haase}]{Krohn2021}
\bibinfo{author}{Krohn, R.}, \bibinfo{author}{M{\"u}ller, S.}, \&
  \bibinfo{author}{Haase, K.} (\bibinfo{year}{2021}).
\newblock \bibinfo{title}{Preventive healthcare facility location planning with
  quality-conscious clients}.
\newblock {\it \bibinfo{journal}{OR Spectrum}\/},  {\it
  \bibinfo{volume}{43}\/}, \bibinfo{pages}{59--87}.
\bibitem[{K{\"u}{\c c}{\"u}kayd{\i}n et~al.(2011)K{\"u}{\c c}{\"u}kayd{\i}n,
  Aras \& Alt{\i}nel}]{Kucukaydin2011a}
\bibinfo{author}{K{\"u}{\c c}{\"u}kayd{\i}n, H.}, \bibinfo{author}{Aras, N.},
  \& \bibinfo{author}{Alt{\i}nel, {\.I}.~K.} (\bibinfo{year}{2011}).
\newblock \bibinfo{title}{A discrete competitive facility location model with
  variable attractiveness}.
\newblock {\it \bibinfo{journal}{J. Oper. Res. Soc.}\/},  {\it
  \bibinfo{volume}{62}\/}, \bibinfo{pages}{1726--1741}.
\bibitem[{Lan{\v c}inskas et~al.(2017)Lan{\v c}inskas, Fern{\'a}ndez,
  Peleg{\'i}n \& {\v Z}ilinskas}]{Lancinskas2017}
\bibinfo{author}{Lan{\v c}inskas, A.}, \bibinfo{author}{Fern{\'a}ndez, P.},
  \bibinfo{author}{Peleg{\'i}n, B.}, \& \bibinfo{author}{{\v Z}ilinskas, J.}
  (\bibinfo{year}{2017}).
\newblock \bibinfo{title}{Improving solution of discrete competitive facility
  location problems}.
\newblock {\it \bibinfo{journal}{Optim. Lett.}\/},  {\it
  \bibinfo{volume}{11}\/}, \bibinfo{pages}{259--270}.
\bibitem[{Lan{\v c}inskas et~al.(2020)Lan{\v c}inskas, {\v Z}ilinskas,
  Fern{\'a}ndez \& Pelegr{\'i}n}]{Lancinskas2020}
\bibinfo{author}{Lan{\v c}inskas, A.}, \bibinfo{author}{{\v Z}ilinskas, J.},
  \bibinfo{author}{Fern{\'a}ndez, P.}, \& \bibinfo{author}{Pelegr{\'i}n, B.}
  (\bibinfo{year}{2020}).
\newblock \bibinfo{title}{Solution of asymmetric discrete competitive facility
  location problems using ranking of candidate locations}.
\newblock {\it \bibinfo{journal}{Soft Comput.}\/},  {\it
  \bibinfo{volume}{24}\/}, \bibinfo{pages}{17705--17713}.
\bibitem[{Lin et~al.(2022)Lin, Wang, Lee \& Chew}]{Lin2022}
\bibinfo{author}{Lin, Y.}, \bibinfo{author}{Wang, Y.}, \bibinfo{author}{Lee,
  L.~H.}, \& \bibinfo{author}{Chew, E.~P.} (\bibinfo{year}{2022}).
\newblock \bibinfo{title}{Profit-maximizing parcel locker location problem
  under threshold {L}uce model}.
\newblock {\it \bibinfo{journal}{Transp. Res. Part E Logist. Transp. Rev.}\/},
  {\it \bibinfo{volume}{157}\/}, \bibinfo{pages}{102541}.
\bibitem[{Lin \& Tian(2021)}]{Lin2021}
\bibinfo{author}{Lin, Y.~H.}, \& \bibinfo{author}{Tian, Q.}
  (\bibinfo{year}{2021}).
\newblock \bibinfo{title}{Branch-and-cut approach based on generalized
  {B}enders decomposition for facility location with limited choice rule}.
\newblock {\it \bibinfo{journal}{Eur. J. Oper. Res.}\/},  {\it
  \bibinfo{volume}{293}\/}, \bibinfo{pages}{109--119}.
\bibitem[{Lin et~al.(2024)Lin, Tian \& Zhao}]{Lin2024a}
\bibinfo{author}{Lin, Y.~H.}, \bibinfo{author}{Tian, Q.}, \&
  \bibinfo{author}{Zhao, Y.} (\bibinfo{year}{2024}).
\newblock \bibinfo{title}{Unified framework for choice-based facility location
  problem}.
\newblock {\it \bibinfo{journal}{INFORMS J. Comput.}\/},  {\it
  \bibinfo{volume}{online}\/}.
\bibitem[{Ljubi{\'c} \& Moreno(2018)}]{Ljubic2018}
\bibinfo{author}{Ljubi{\'c}, I.}, \& \bibinfo{author}{Moreno, E.}
  (\bibinfo{year}{2018}).
\newblock \bibinfo{title}{Outer approximation and submodular cuts for maximum
  capture facility location problems with random utilities}.
\newblock {\it \bibinfo{journal}{Eur. J. Oper. Res.}\/},  {\it
  \bibinfo{volume}{266}\/}, \bibinfo{pages}{46--56}.
\bibitem[{Mai \& Lodi(2020)}]{Mai2020}
\bibinfo{author}{Mai, T.}, \& \bibinfo{author}{Lodi, A.}
  (\bibinfo{year}{2020}).
\newblock \bibinfo{title}{{A multicut outer-approximation approach for
  competitive facility location under random utilities}}.
\newblock {\it \bibinfo{journal}{Eur. J. Oper. Res.}\/},  {\it
  \bibinfo{volume}{284}\/}, \bibinfo{pages}{874--881}.
\bibitem[{McFadden(1974)}]{McFadden1974}
\bibinfo{author}{McFadden, D.} (\bibinfo{year}{1974}).
\newblock \bibinfo{title}{Conditional logit analysis of qualitative choice
  behavior}.
\newblock In \bibinfo{editor}{P.~Zarembka} (Ed.), {\it
  \bibinfo{booktitle}{Frontiers in Econometrics}\/} (pp.
  \bibinfo{pages}{105--142}).
\bibitem[{{M{\'e}ndez-Vogel} et~al.(2023){M{\'e}ndez-Vogel}, Marianov,
  {L{\"u}er-Villagra} \& Eiselt}]{Mendez-Vogel2023a}
\bibinfo{author}{{M{\'e}ndez-Vogel}, G.}, \bibinfo{author}{Marianov, V.},
  \bibinfo{author}{{L{\"u}er-Villagra}, A.}, \& \bibinfo{author}{Eiselt, H.}
  (\bibinfo{year}{2023}).
\newblock \bibinfo{title}{Store location with multipurpose shopping trips and a
  new random utility customers' choice model}.
\newblock {\it \bibinfo{journal}{Eur. J. Oper. Res.}\/},  {\it
  \bibinfo{volume}{305}\/}, \bibinfo{pages}{708--721}.
\bibitem[{Nemhauser \& Wolsey(1981)}]{Nemhauser1981}
\bibinfo{author}{Nemhauser, G.}, \& \bibinfo{author}{Wolsey, L.}
  (\bibinfo{year}{1981}).
\newblock \bibinfo{title}{Maximizing submodular set functions: Formulations and
  analysis of algorithms}.
\newblock In \bibinfo{editor}{P.~Hansen} (Ed.), {\it
  \bibinfo{booktitle}{North-Holland Mathematics Studies}\/} (pp.
  \bibinfo{pages}{279--301}).
\newblock volume~\bibinfo{volume}{59}.
\bibitem[{Nemhauser et~al.(1978)Nemhauser, Wolsey \& Fisher}]{Nemhauser1978}
\bibinfo{author}{Nemhauser, G.~L.}, \bibinfo{author}{Wolsey, L.~A.}, \&
  \bibinfo{author}{Fisher, M.~L.} (\bibinfo{year}{1978}).
\newblock \bibinfo{title}{An analysis of approximations for maximizing
  submodular set functions---{I}}.
\newblock {\it \bibinfo{journal}{Math. Program.}\/},  {\it
  \bibinfo{volume}{14}\/}, \bibinfo{pages}{265--294}.
\bibitem[{Plastria(2001)}]{Plastria2001}
\bibinfo{author}{Plastria, F.} (\bibinfo{year}{2001}).
\newblock \bibinfo{title}{Static competitive facility location: An overview of
  optimisation approaches}.
\newblock {\it \bibinfo{journal}{Eur. J. Oper. Res.}\/},  {\it
  \bibinfo{volume}{129}\/}, \bibinfo{pages}{461--470}.
\bibitem[{Rahmaniani et~al.(2017)Rahmaniani, Crainic, Gendreau \&
  Rei}]{Rahmaniani2017}
\bibinfo{author}{Rahmaniani, R.}, \bibinfo{author}{Crainic, T.~G.},
  \bibinfo{author}{Gendreau, M.}, \& \bibinfo{author}{Rei, W.}
  (\bibinfo{year}{2017}).
\newblock \bibinfo{title}{{The {B}enders decomposition algorithm: A literature
  review}}.
\newblock {\it \bibinfo{journal}{Eur. J. Oper. Res.}\/},  {\it
  \bibinfo{volume}{259}\/}, \bibinfo{pages}{801--817}.
\bibitem[{Richard(2011)}]{Richard2011}
\bibinfo{author}{Richard, J.-P.~P.} (\bibinfo{year}{2011}).
\newblock \bibinfo{title}{Lifting techniques for mixed integer programming}.
\newblock In \bibinfo{editor}{J.~J. Cochran}, \bibinfo{editor}{L.~A. Cox},
  \bibinfo{editor}{P.~Keskinocak}, \bibinfo{editor}{J.~P. Kharoufeh}, \&
  \bibinfo{editor}{J.~C. Smith} (Eds.), {\it \bibinfo{booktitle}{Wiley
  Encyclopedia of Operations Research and Management Science}\/}.
\bibitem[{Shi et~al.(2022)Shi, Prokopyev \& Zeng}]{Shi2022}
\bibinfo{author}{Shi, X.}, \bibinfo{author}{Prokopyev, O.~A.}, \&
  \bibinfo{author}{Zeng, B.} (\bibinfo{year}{2022}).
\newblock \bibinfo{title}{{Sequence independent lifting for a set of submodular
  maximization problems}}.
\newblock {\it \bibinfo{journal}{Math. Program.}\/},  {\it
  \bibinfo{volume}{196}\/}, \bibinfo{pages}{69--114}.
\bibitem[{Su{\'a}rez-Vega et~al.(2004)Su{\'a}rez-Vega, Santos-Pe{\~n}ate \&
  Dorta-Gonz{\'a}lez}]{Suarez-Vega2004}
\bibinfo{author}{Su{\'a}rez-Vega, R.}, \bibinfo{author}{Santos-Pe{\~n}ate,
  D.~R.}, \& \bibinfo{author}{Dorta-Gonz{\'a}lez, P.} (\bibinfo{year}{2004}).
\newblock \bibinfo{title}{Competitive multifacility location on networks: The
  ({\emph{r}}{$\mid$}{X}{\textsubscript{ {\emph{p}}}})-medianoid problem}.
\newblock {\it \bibinfo{journal}{J. Reg. Sci.}\/},  {\it
  \bibinfo{volume}{44}\/}, \bibinfo{pages}{569--588}.
\bibitem[{Wolsey(1976)}]{Wolsey1976a}
\bibinfo{author}{Wolsey, L.~A.} (\bibinfo{year}{1976}).
\newblock \bibinfo{title}{{Technical note--facets and strong valid inequalities
  for integer programs}}.
\newblock {\it \bibinfo{journal}{Oper. Res.}\/},  {\it \bibinfo{volume}{24}\/},
  \bibinfo{pages}{367--372}.
\bibitem[{Zhang et~al.(2012)Zhang, Berman \& Verter}]{Zhang2012}
\bibinfo{author}{Zhang, Y.}, \bibinfo{author}{Berman, O.}, \&
  \bibinfo{author}{Verter, V.} (\bibinfo{year}{2012}).
\newblock \bibinfo{title}{The impact of client choice on preventive healthcare
  facility network design}.
\newblock {\it \bibinfo{journal}{OR Spectrum}\/},  {\it
  \bibinfo{volume}{34}\/}, \bibinfo{pages}{349--370}.
\bibitem[{Zhao et~al.(2020)Zhao, Guo, Guo, Zhang \& Sun}]{Zhao2020}
\bibinfo{author}{Zhao, Y.}, \bibinfo{author}{Guo, Y.}, \bibinfo{author}{Guo,
  Q.}, \bibinfo{author}{Zhang, H.}, \& \bibinfo{author}{Sun, H.}
  (\bibinfo{year}{2020}).
\newblock \bibinfo{title}{Deployment of the electric vehicle charging station
  considering existing competitors}.
\newblock {\it \bibinfo{journal}{IEEE Trans. Smart Grid}\/},  {\it
  \bibinfo{volume}{11}\/}, \bibinfo{pages}{4236--4248}.

\end{thebibliography}
\newpage

\appendix

%

\section{Proof of Proposition \ref{prop:SC-simplification}} 

\label{sect:appendix-SC-simplification}
\begin{proof}
	It suffices to consider the case $|\CS| \ge \gamma + 1$. 
	In this case, it is easy to see  
	\begin{align*}
		& \Phi(\CS) = \Phi(\CS^{\gamma}),\\
		& \rho_j(\CS) = \rho_j(\CS^{\gamma}), \ \forall \ j \in [n] \backslash \CS, \\
		& \rho_j([n]\backslash j) = 0,~ \rho_j(\CS^{\gamma}) = 0, \ \forall \ j \in \CS \backslash \CS^{\gamma}. 
	\end{align*}
	Thus, 
		\begin{equation*} 
			\begin{aligned}
				& \Phi(\CS) + \sum_{j \in [n] \backslash \CS}\rho_j(\CS) x_j 
				- \sum_{j \in \CS} \rho_j([n]\backslash j) (1-x_j)\\
				=~ & \Phi(\CS) + \sum_{j \in [n] \backslash \CS} \rho_j(\CS) x_j - \sum_{j \in \CS^{\gamma}} \rho_j([n] \backslash j)(1-x_j)-\underbrace{\sum_{j \in \CS \backslash \CS^{\gamma}} \rho_j([n] \backslash j)(1-x_j)}_{=0} \\
				= ~& \Phi(\CS) + \sum_{j \in [n] \backslash \CS^\gamma} \rho_j(\CS^{\gamma}) x_j 
				- \underbrace{\sum_{j \in \CS\backslash\CS^{\gamma}} \rho_j(\CS^{\gamma}) x_j}_{=0} - \sum_{j \in \CS^{\gamma}} \rho_j([n] \backslash j)(1-x_j) \\
				= ~& \Phi(\CS^{\gamma}) + \sum_{j \in [n] \backslash \CS^{\gamma}} \rho_j(\CS^{\gamma}) x_j
				- \sum_{j \in \CS^{\gamma}} \rho_j([n] \backslash j)(1-x_j).
			\end{aligned}
	\end{equation*}
	This shows statement (i).
	
	Similarly, it is simple to check 
	\begin{subequations}
		\begin{align*}
			& \Phi(\CS) = \Phi(\bar{\CS}^\gamma),\\
			& \rho_j(\CS \backslash j) = \rho_j(\bar{\CS}^\gamma \backslash j), \ \forall  \ j \in \CS,\\
			& \rho_j(\bar{\CS}^\gamma \backslash j) = 0, \ \forall \ j \in \bar{\CS}^\gamma \backslash \CS. 
		\end{align*}
	\end{subequations}
	Hence,
	\begin{equation*}
		\begin{aligned}
			& \Phi(\bar{\CS}^\gamma) - \sum_{j \in \bar{\CS}^\gamma} \rho_j(\bar{\CS}^\gamma \backslash j) (1 - x_j)
			+ \sum_{j \in [n] \backslash\bar{\CS}^\gamma} \rho_j(\varnothing) x_j\\
			= ~&  \Phi(\bar{\CS}^\gamma) - \sum_{j \in\CS} \rho_j(\bar{\CS}^\gamma \backslash j) (1 - x_j) -  \underbrace{\sum_{j \in\bar{\CS}^\gamma\backslash \CS} \rho_j(\bar{\CS}^\gamma \backslash j) (1 - x_j)}_{=0}
			+ \sum_{j \in [n] \backslash \bar{\CS}^\gamma} \rho_j(\varnothing) x_j\\
			= ~& \Phi(\CS) - \sum_{j \in \CS} \rho_j(\CS \backslash j) (1 - x_j)
			+ \sum_{j \in [n] \backslash \bar{\CS}^\gamma} \rho_j(\varnothing) x_j\\
			\le ~& \Phi(\CS) - \sum_{j \in \CS} \rho_j(\CS \backslash j) (1 - x_j)
			+ \sum_{j \in [n] \backslash \CS} \rho_j(\varnothing) x_j,
		\end{aligned}
	\end{equation*}
	where the last inequality follows from $\rho_j(\varnothing) \ge 0$, $x_j \in \{0,1\}$, and $\CS \subseteq  \bar{\CS}^\gamma$.	
	Thus, statement (ii) also holds true.
\end{proof}
\section{Proof of Theorem \ref{thm:convex-hull-gamma1}} 
\label{sect:appendix-convex-hull-gamma1}

To prove Theorem \ref{thm:convex-hull-gamma1},
we need the following lemma. 
\begin{lemma}\label{obs:gamma1-minRHS}
	Let $f_{\ell}(x)$ be the right-hand sides of inequalities (24), 
	i.e., 
	\begin{equation}\label{fdef}\small
		\begin{aligned}
			\RHS_{\ell}(x) 
			=  \frac{ u_{\ell+1}}{u_{\ell+1} + u_0} + \sum_{j = 1}^{n}\left( \frac{u_j}{u_j + u_0} - \frac{u_{\ell+1}}{u_{\ell+1} + u_0}\right)^+ x_j
			=  \frac{ u_{\ell+1}}{u_{\ell+1} + u_0} + \sum_{j = 1}^{\ell }\left( \frac{u_j}{u_j + u_0} - \frac{u_{\ell+1}}{u_{\ell+1} + u_0}\right) x_j, \ \forall~\ell \in [n].
		\end{aligned}
	\end{equation}
	For any $x^\ast \in [0,1]^n$, it follows 
	\begin{equation}\label{goal-lem}
		f_{k(x^\ast)}(x^\ast) = \min \left\{f_{\ell}(x^\ast) \, : \, \ell \in [n]\right\},
	\end{equation}
	where 
	\begin{equation}\label{kdef}
	k(x^*) =\left\{
	\begin{array}{ll}
		1, & ~\text{if}~x_1^*=1;\\
		\max \left\{ \ell \in [n]: \sum_{j = 1}^{\ell } x^\ast _j < 1\right\}, & \text{otherwise}.
	\end{array}\right.
	\end{equation}
\end{lemma}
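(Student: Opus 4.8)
The plan is to show that the index $k(x^*)$ defined in \eqref{kdef} is a minimizer of $\ell \mapsto f_\ell(x^*)$ by analyzing the sign of the differences $f_{\ell+1}(x^*) - f_\ell(x^*)$ for consecutive indices $\ell$. Since the function $\ell \mapsto f_\ell(x^*)$ is defined on the finite set $[n]$, it suffices to show that it is ``unimodal'' with the flat minimum located at $k(x^*)$: that is, $f_\ell(x^*) \ge f_{\ell+1}(x^*)$ for all $\ell < k(x^*)$ and $f_\ell(x^*) \le f_{\ell+1}(x^*)$ for all $\ell \ge k(x^*)$. First I would write, using the second (simplified) expression in \eqref{fdef},
\begin{equation*}
	f_{\ell+1}(x^*) - f_\ell(x^*) = \left(\frac{u_{\ell+2}}{u_{\ell+2}+u_0} - \frac{u_{\ell+1}}{u_{\ell+1}+u_0}\right)\left(1 - \sum_{j=1}^{\ell+1} x_j^*\right),
\end{equation*}
which is obtained by direct algebraic manipulation after noting that the coefficient of $x_{\ell+1}^*$ changes and a telescoping occurs in the constant term.

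The key observation is that $t \mapsto t/(t+u_0)$ is nondecreasing on $\mathbb{R}_+$ (as $u_0 > 0$), and since $u_{\ell+1} \ge u_{\ell+2}$ we have $\frac{u_{\ell+2}}{u_{\ell+2}+u_0} - \frac{u_{\ell+1}}{u_{\ell+1}+u_0} \le 0$. Therefore the sign of $f_{\ell+1}(x^*) - f_\ell(x^*)$ is the \emph{opposite} of the sign of $1 - \sum_{j=1}^{\ell+1} x_j^*$. Now I would use the monotonicity of the partial sums: $\sigma_\ell := \sum_{j=1}^\ell x_j^*$ is nondecreasing in $\ell$ since $x_j^* \ge 0$. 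Hence there is a threshold behavior: for small $\ell$ we have $\sigma_{\ell+1} < 1$ (so the difference $f_{\ell+1} - f_\ell \le 0$, i.e.\ $f$ is nonincreasing), and once $\sigma_{\ell+1} \ge 1$ we have $f_{\ell+1} - f_\ell \ge 0$ (i.e.\ $f$ is nondecreasing). The definition of $k(x^*)$ as $\max\{\ell : \sigma_\ell < 1\}$ (or $1$ when $x_1^* = 1$, i.e.\ when no such $\ell$ exists) picks out exactly the last index before this transition, so $f_{k(x^*)}(x^*) \le f_\ell(x^*)$ for every $\ell$.

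The main obstacle — really just a bookkeeping matter rather than a deep difficulty — is handling the boundary cases cleanly: the case $x_1^* = 1$ where the set $\{\ell : \sigma_\ell < 1\}$ is empty and $k(x^*) = 1$ is declared by fiat; the case where $\sigma_\ell < 1$ for all $\ell \in [n]$ so that $k(x^*) = n$; and verifying that the telescoping identity for $f_{\ell+1} - f_\ell$ is valid at $\ell = n-1$ given the convention $u_{n+1} = 0$ (and that $f_n$ uses $u_{n+1}/(u_{n+1}+u_0) = 0$). I would dispatch these by checking directly that in the first case $\sigma_{\ell+1} \ge \sigma_1 = x_1^* = 1$ for all $\ell \ge 0$, so $f$ is nondecreasing from the start and $f_1$ is the minimum; and in the second case $f$ is nonincreasing throughout, so $f_n$ is the minimum. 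Assembling these pieces gives \eqref{goal-lem}.
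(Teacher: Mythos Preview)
Your proposal is correct and follows essentially the same approach as the paper's proof: both compute the consecutive difference (the paper writes $f_\ell(x^*)-f_{\ell-1}(x^*) = \bigl(\tfrac{u_\ell}{u_\ell+u_0}-\tfrac{u_{\ell+1}}{u_{\ell+1}+u_0}\bigr)\bigl(\sum_{j=1}^\ell x_j^*-1\bigr)$, which is your identity after the index shift $\ell\to\ell+1$), observe that the sign is governed by whether $\sum_{j\le\ell} x_j^*$ has crossed $1$, and conclude the chain of inequalities $f_1\ge\cdots\ge f_{k(x^*)}\le\cdots\le f_n$. Your treatment of the boundary cases is slightly more explicit than the paper's, but the argument is the same.
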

\begin{proof}
	Given any $\ell \in \{2, 3, \ldots, n\}$, it follows
	\begin{equation*}
		\begin{aligned}
			\RHS_{\ell}(x^\ast) - \RHS_{\ell-1}(x^\ast)
			& = \frac{u_{\ell+1}}{u_{\ell+1} + u_0}
			+ \sum_{j = 1}^{\ell}\left( \frac{u_j}{u_j + u_0} - \frac{u_{\ell+1}}{u_{\ell+1} + u_0} \right) x^\ast_j
			- \frac{u_{\ell}}{u_{\ell} + u_0}
			- \sum_{j = 1}^{\ell - 1}\left( \frac{u_j}{u_j + u_0} - \frac{u_{\ell }}{u_{\ell } + u_0} \right)x^\ast_j\\
			& = \left(\frac{u_{\ell}}{u_{\ell} + u_0} - \frac{u_{\ell+1}}{u_{\ell+1} + u_0}\right) \cdot \left( \sum_{j=1}^{\ell} x^\ast_j - 1\right).
		\end{aligned}
	\end{equation*}	
	Together with the definition of $k(x^\ast)$, we have $f_1(x^\ast) \ge f_2(x^\ast) \ge \cdots \ge f_{k(x^*)}(x^\ast)$ and 
	$f_{k(x^*)}(x^\ast) \le f_{k(x^*)+1}(x^\ast) \le \cdots \le f_{n}(x^\ast)$. 
	Thus equation \eqref{goal-lem} holds true.
\end{proof}

\begin{proof}[Proof of Theorem \ref{thm:convex-hull-gamma1}]
	It suffices to show that every extreme point $(w^\ast, x^\ast)$ of $\CP:= \{(w, x) \in \R \times [0,1]^n: w \le \RHS_{\ell}(x), ~\forall~\ell \in [n]\}$ is integral.
	Suppose, otherwise, that $(w^\ast, x^\ast)$ is  a fractional extreme point of $\CP$, i.e., $x^\ast_j \in (0,1)$ for some $j \in [n]$.
	We have the following two cases.
	\begin{itemize}
		\item[(i)] $x^\ast_t \in (0,1)$ holds for some $t \in \{k(x^\ast)+1, k(x^\ast)+2, \dots, n\}$. 
		Let $(w^1, x^1) = (w^\ast, x^\ast + \delta \boldsymbol{e}_t)$
		and $(w^2, x^2) = (w^\ast, x^\ast - \delta \boldsymbol{e}_t)$, where 
		$\delta >0$ is a sufficiently small value.
		By the definition of $f$ in \eqref{fdef}, we have $ f_{k(x^*)}(x^*)=f_{k(x^*)}(x^1)$.
		From $t \geq k(x^*)+1$ and the definitions of $x^1$ and $k(x^*)$ in \eqref{kdef},  it follows $k(x^*)=k(x^1)$. 
		Thus, $w^1 =w^* \leq f_{k(x^*)}(x^*) = f_{k(x^*)}(x^1)= f_{k(x^1)}(x^1)$.
		From Lemma B.1, this implies $w^1  \leq f_{\ell}(x^1)$ for all $\ell\in[n]$, and therefore $(w^1, x^1) \in \CP$. 
		The proof of $(w^2, x^2) \in \CP$ is similar.
		Since $(w^\ast, x^\ast) = \frac{1}{2} (w^1, x^1) + \frac{1}{2} (w^2, x^2)$ and $\delta >0$, 
		$(w^\ast, x^\ast)$ cannot be an extreme point of $\CP$, a contradiction. 
		\item[(ii)] $x^\ast_{t} \in (0,1)$ holds for some $t \in [k(x^*)]$ and $x^*_\ell \in \{0,1\}$ holds for all $\ell \in \{k(x^\ast)+1,k(x^\ast)+2,\ldots, n\}$.
		Then, 
		from the definition of $k(x^*)$ in \eqref{kdef}, this implies 
		\begin{equation}\label{def1}
			x_{k(x^\ast)+1}^*=1, ~x_1^*\neq 1, ~\text{and}~ \sum_{j=1}^{k(x^*)} x_j^* < 1.
		\end{equation}
		Let $(w^3, x^3) = (w^\ast + \zeta, x^\ast + \delta \boldsymbol{e}_{t})$
		and $(w^4, x^4) = (w^\ast - \zeta, x^\ast - \delta \boldsymbol{e}_{t})$, where 
		$\delta >0$ is a sufficiently small value and $\zeta = (\frac{u_{t}}{u_{t} + u_0} - \frac{u_{k(x^*)+1}}{u_{k(x^*)+1} + u_0}) \cdot \delta$. 
		As $\delta$ is sufficiently small,  by \eqref{def1}, we have $k(x^3)=k(x^4)=k(x^*)$, and thus
		\begin{align}
			&  \RHS_{k(x^3)}(x^3)= \RHS_{k(x^*)}(x^3)= \RHS_{k(x^*)}(x^\ast) + \left(\frac{u_{t}}{u_{t} + u_0} - \frac{u_{k(x^*)+1}}{u_{k(x^*)+1} + u_0}\right) \cdot \delta= \RHS_{k(x^*)}(x^\ast) + \zeta,
			\label{ineq-tmp22}
			\\
			&  \RHS_{k(x^4)}(x^4) =\RHS_{k(x^*)}(x^4)= \RHS_{k(x^*)}(x^\ast) - \left(\frac{u_{t}}{u_{t} + u_0} - \frac{u_{k(x^*)+1}}{u_{k(x^*)+1}+ u_0}\right) \cdot \delta=\RHS_{k(x^*)}(x^\ast) - \zeta.
			\label{ineq-tmp23}
		\end{align}
		From $w^3=w^*+\zeta$, $w^4=w^*-\zeta$, $w^* \leq f_{k(x^*)}(x^*)$, \eqref{ineq-tmp22}, and \eqref{ineq-tmp23}, it follows $w^3 \leq \RHS_{k(x^3)}(x^3)$ and $w^4\leq \RHS_{k(x^4)}(x^4)$.
		Therefore, using Lemma B.1, this implies $w^3  \leq f_{\ell}(x^3)$ and $w^4  \leq f_{\ell}(x^4)$  for all $\ell\in[n]$, and thus $(w^3, x^3),  (w^4, x^4)\in \CP$. 
		Since $(w^\ast, x^\ast) = \frac{1}{2} (w^3, x^3) + \frac{1}{2} (w^4, x^4)$ and $\delta > 0$, point $(w^\ast, x^\ast)$ cannot be an extreme point of $\CP$, a contradiction. 
		 \qedhere
	\end{itemize}
\end{proof}

\section{Proof of Theorem \ref{thm:np-hard}}
\label{sect:appendix-nphard}
	We prove that problem \eqref{prob:lift-NPhard}
	is as hard as the partition problem, which is {NP-complete} \citep{Garey1978}.
	First, we introduce the partition problem: 
	given a finite set $\CN = \{1, 2, \dots, p\}$ of $p$ elements and a size $a_i \in \Z_+$ for the $i$-th element with $\sum_{i \in \CN} a_i = 2b$, does there exist a partition $\CN = \CN_1 \cup \CN_2$ such that $\sum_{i \in \CN_1} a_i = \sum_{i \in \CN_2} a_i = b$?
	Without loss of generality, we assume $b >0$.
	
	Given any instance of a partition problem, we construct an instance of problem \eqref{prob:lift-NPhard}
	by setting $q=\gamma= p$, $u_0 = b$, $u_j = a_j$, and $\alpha_j = a_j / 4b$ for $j \in [p]$.
	As $\gamma = p$, the cardinality constraint $\sum_{j = 1}^p y_j \le \gamma$ in problem \eqref{prob:lift-NPhard}
	becomes redundant.
	Thus, problem \eqref{prob:lift-NPhard}
	reduces to
	\begin{equation}\label{prob-complexity-liftprobg}
		\begin{aligned}
			\nu := \max\left\{
			 \frac{\sum_{j = 1}^p a_j y_j }{\sum_{j = 1}^p a_j y_j + b} - \frac{1}{4b} \sum_{j = 1}^p a_j y_j: y \in \{0,1\}^p
			\right\}.
		\end{aligned}
	\end{equation}
	Letting $f(z) = \frac{z}{z+b} - \frac{z}{4b}$ where $z \in [0,2b]$, it is simple to check that $f(z) \leq 1/4$ where the only maximum point is arrived at $z=b$. 
	Using this fact, $\nu = 1/4$ holds if and only if $\sum_{j = 1}^p a_j y_j = b$ holds for some $y \in \{0,1\}^p$, which is further equivalent to the existence of the partition problem $\CN = \CN_1 \cup \CN_2$ with $\sum_{i \in \CN_1} a_i = \sum_{i \in \CN_2} a_i = b$, that is, the answer to the partition problem is yes. 
	Since the above transformation can be done in polynomial time and 
	the partition problem is NP-complete, we conclude that problem \eqref{prob-complexity-liftprobg} is NP-hard.

\end{document}